\definecolor{airforceblue}{rgb}{0.90, 0.44, 0.86}
\definecolor{ashgrey}{rgb}{0.7, 0.75, 0.71}
\newcommand*{\BLACKBOX}{\hfill\ensuremath{\blacksquare}}
\newcommand{\sMP}[1][]{\ifthenelse{\isempty{#1}}{^{(i)}}{^{(#1)}}}
\newcommand{\MatTwo}[4]
  {\begin{bmatrix}
    #1 & #2\\
    #3 & #4
\end{bmatrix}}
\newcommand{\VecTwo}[2]
  {\begin{bmatrix}
    #1 \\
    #2
\end{bmatrix}}
\newtheorem{assume}{Assumption}
\numberwithin{equation}{section}
    \newcommand{\keywords}[1]{\par\addvspace\baselineskip  
     \noindent\keywordname\enspace\ignorespaces#1}
\begin{document}
\mainmatter  
\title{ Discontinuous Galerkin  Isogeometric Analysis
         on Non-matching Segmentation: Error Estimates and Efficient Solvers }

\titlerunning{dG IgA on non-matching segmantations: error estimates and solvers}


\author{
Christoph Hofer$^1$
\and     Ulrich Langer$^{1,2}$ 
\and Ioannis Toulopoulos$^2$
}
\authorrunning{C. Hofer,  U. Langer, I. Toulopoulos }

\institute{ $1$ Johannes Kepler University (JKU),\\
				Altenbergerstr. 69, A-4040 Linz, Austria,\\[1ex]
	    $2$ Johann Radon Institute for Computational and Applied Mathematics (RICAM),\\
				Austrian Academy of Sciences\\
				Altenbergerstr. 69, A-4040 Linz, Austria,\\[1ex]
\mailse\\
\mailsa \\
\mailsd  
 }

%

\noindent
\maketitle

\begin{abstract}
The Isogeometric Analysis (IgA) of boundary value problems in complex domains
often requires a decomposition of the computational domain into patches
such that each of which can be parametrized by the so-called geometrical mapping.
In this paper, we develop discontinuous Galerkin (dG) IgA techniques  for solving elliptic diffusion problems on  decompositions that can include non-matching parametrizations of the interfaces,
  i.e., the interfaces of the adjacent patches may be not identical. 
  The lack of the exact parametrization of the patches leads to the creation of gap and overlapping regions between the patches.  
  This does not allow the immediate use of the classical numerical fluxes that are known in the literature.
  The unknown normal fluxes of the solution on the non-matching interfaces are approximated by Taylor expansions 
  using the values of the solution computed on the boundary of the patches. 
  These approximations 
  are used in order to build up the numerical fluxes of the final dG IgA scheme and to couple the local patch-wise discrete problems. 
    The resulting linear systems are solved by using efficient domain decomposition methods based on the tearing and interconnecting technology.
  We present numerical results of a series of test problems that validate the theoretical estimates presented.
\keywords{ Elliptic diffusion problems, 
		Heterogeneous diffusion coefficients, 
          	Isogeometric Analysis,
          	Discontinuous Galerkin methods,
		Segmentations crimes,
		{IETI-DP domain decomposition solvers.} }
\end{abstract}
Mathematics subject classification (1991): {65D07, 65F10, 65M55, 65M12, 65M15}

\section{Introduction}
\label{intro}
During last decade, there has been an increasing interest in solving elliptic boundary value problems 
in complicated domains using the Isogeometric Analysis (IgA) methodologies.
The core idea of IgA is to use  the same smooth and high order superior finite dimensional spaces, e.g., B-splines, NURBS, 
for parametrizing  the computational domain and for approximating the  solution of the 
Partial Differential Equation (PDE) model of interest.
The IgA approach for discretizing PDEs and its benefits  have  been highlighted and  discussed in many 
 publications,  see, e.g., the monograph   \cite{LT:Hughes_IGAbook_2009} and the survey paper \cite{VeigaBuffaSangalli_2014}.
 For example, the   simple  and easily materialized algorithm for the construction  of the B-spline basis functions with possible high  smoothness 
 helps extremely in the production of high order approximate solutions. Furthermore IgA offers a particular suitable frame
 for developing $hp$ (here $p$ is the B-spline degree) adaptivity methods with a possible change of the inter-element smoothness, 
 \cite{LT:Hughes_IGAbook_2009}.
 \par
 In most realistic applications, the computational domain $\Omega$ is decomposed into
a union of non-overlapping subdomains   $\overline{\Omega}=\cup_{i=1}^N \overline{\Omega_i}$, 
where each $\Omega_i$ is referred to as a patch, and then it is  viewed as an image of a parametrization mapping.
These mappings are linear combinations of the  B-splines, NURBS, etc, basis functions.
The vector valued coefficients of the parametrizations  describe the shape of the patch. They are called control points.
There have been presented several  segmentation  techniques   
and procedures for splitting complex domains into simpler subdomains and defining their control nets, see, e.g., 
\cite{HLT:JuettlerKaplNguyenPanPauley:2014a},
 \cite{HLT:PauleyNguyenMayerSpehWeegerJuettler:2015a},  
 \cite{Hoschek_Lasser_CAD_book_1993} for a more comprehensive
analysis. 
Usually, one obtains  compatible parametrizations of the patches in the sense that 
the  parameterizations of adjacent patches lead to identical interfaces. 
However, some serious difficulties can arise, especially, when the {patches}
 differ topologically a lot  from a cube. 
 In particular, the control points  related to an interface  may not have been  appropriately defined,
which results in parametrizations of patches that create gap and overlapping regions. 
As a direct consequence, the use of these paramatrization mappings
causes  additional errors in the IgA discretization process of the PDE. 
We speak about segmentation crimes that causes additional errors in the IgA simulation.
\par
In this paper, we are interested in  investigating the solution of  elliptic diffusion
problems using IgA on multipatch decompositions  that include  gaps and overlaps. 
 The missing capability of the IgA patch parametrizations to represent exactly the physical interfaces
  prevent us from using directly 
  the classical interface conditions,
  of the solution in order to  construct the numerical fluxes in the dG scheme.
  We need to derive new    conditions on the interior faces of the 
  gap/overllaping regions and in order to construct the numerical fluxes and to ensure communication of the discrete 
  problems patch-wise problems. 
 We extent the ideas presented in \cite{HoferLangerToulopoulos_2015a,HoferToulopoulos_IGA_Gaps_2015a},
 where discontinuous Galerkin (dG) methods for  IgA 
 have been developed on decompositions including only gaps. In particular, 
  Taylor expansions are also used 
 on the interior faces of the overlapping regions, 
 to approximate the normal fluxes on the overlapping boundaries. 
   Finally, these Taylor  expansions are used   to build  appropriate numerical fluxes
  for the final dG IgA scheme, which in turn help on coupling the local patch-wise discrete problems.
In \cite{LT:LangerToulopoulos:2014a}, dG IgA methods have been analyzed for matching interface parametrization. 
In the present paper,  we use the same results 
in order to express the error bounds related to the approximation properties of the B-splines. Following 
the analysis in \cite{HoferLangerToulopoulos_2015a,HoferToulopoulos_IGA_Gaps_2015a}, 
we investigate  the effect of the approximation of the fluxes on the gap/overlapping 
regions  on the  accuracy of the proposed method. 
 The model problem that we are going to study is a linear diffusion problem with a discontinuous diffusion coefficient.
 For simplicity, we treat the two cases of gap and overlapping regions separately
using decompositions formed by only two patches. 
Then, we proceed and express the final dG IgA scheme in the general case of multipatch decompositions. 
Regarding the case of having overlapping patches, our analysis takes into account the possibility of co-existence of  different diffusion coefficients, which leads to the discretization
of two different  perturbed problems on the overlapping region. 
This means that we analyze the error coming from the B-spline approximations,
 the normal flux approximations and the consistency error, i.e., we estimate the distance between the solutions of the peturbated  problems and the solution of the physical problems.    
We show a priori error estimates in the classical  dG-norm, which depends on the accuracy of the normal flux approximation and 
is expressed in terms of the mesh size $h$ and the parameter $d_{M}=\max(d_g,d_o)$, where $d_g$ 
 is the maximum distance between the  diametrically opposite points on the gap boundaries and 
$d_o$ is  the maximum distance of the opposite points on  the  overlapping boundaries, respectively. 
In particular, we  show that,  if the IgA space defined on the patches  has the approximation power 
$h^p$ and $d_M$  is $\mathcal{O}(h^{p+\frac{1}{2}})$, 
then we obtain optimal convergence rate.
\par
The second contribution of this paper is the presentation of efficient solvers based on the Dual-Primal Isogeometric Tearing and Interconnecting  (IETI-DP) method \cite{HLT:HoferLanger:2016b}. More precisely, we use the dG-IETI-DP method which can handle formulations based on dG, see \cite{HLT:HoferLanger:2016a}. The IETI-DP method provides a quasi optimal condition number bound on the condition number of the preconditioned 
linear system with respect to the mesh size $h$ and robustness with respect to jumps in the diffusion coefficient across interfaces, see \cite{HLT:HoferLanger:2016b,HLT:BeiraoChoPavarinoScacchi:2013a}. For versions using the more sophisticated \emph{deluxe scaling}, see \cite{HLT:BeiraoPavarinoScacchiWidlundZampini:2014a}.  Numerical examples for the dG-IETI-DP methods also indicate the same behaviour, see \cite{HLT:HoferLanger:2016a}. For the proof of the finite element version, we refer to
\cite{HLT:DryjaGalvisSarkis:2013a,HLT:DryjaSarkis:2014a,HLT:DryjaGalvisSarkis:2007a}.\\
Finally, we investigate the accuracy of the proposed method and validate the theoretical estimates by
solving a series of test problems on decompositions with gap and overlapping regions. The first three examples 
are considered in two-dimensional domains with known exact solutions. 
In the last two examples, we consider complicated three-dimensional domains. 
Through the investigation of the numerical convergence rates, 
we have found that in the cases where $d_M$ is of order $\mathcal{O}(h^{p+\frac{1}{2}})$, the rates
are optimal, as it is predicted by the theoretical analysis.
 \\
 Lastly, we mention that several   techniques have recently  been investigated   for coupling 
 non-matching (or non-conforming) subdomain parametrizations in some weak sense. 
 In \cite{Ruess_NitcCoplPtac_2015} and \cite{Nguyen_Nitche_3Dcoupli_2014}, 
Nitsche's method have  been applied to enforce  weak coupling conditions along trimmed B-spline patches. 
 In \cite{Apostolatos_Schmidt_Wuchner_Bletzinger_IJNUMEng_2014}, the most common techniques for 
 imposing   weakly the continuity of the solution on the interfaces have been  applied and tested on nonlinear 
 elasticity problems. The numerical tests have been performed on non-matching grid parametrizations. 
 {
 Recently,  overlapping domain decomposition methods 
 combined  with local non-matching mesh refinement techniques have been investigated 
 for solving diffusion problems in 
 complex domains in \cite{bercovier2015overlapping}. }
Furthermore, mortar
 methods have been developed in the IgA  context utilizing different
 B-spline degrees for the Lagrange multiplier in \cite{Brivadis_IgAMortar_2015}. 
 The method has been applied to
decompositions with non-matching interface parametrizations and studied numerically.  
\\
The paper is organized as follows. In Section 2, some notations, the weak 
form of the problem and  the definition of the  B-spline spaces are presented.  
Furthermore, we give a description of the gap region. 
In Section 3,  we derive  the problem in $\Omega\setminus\overline{\Omega}_g$, 
the approximation of the normal fluxes on the $\partial \Omega_g$, and the 
dG IgA scheme. In the last part of this section, we estimate the  remainder terms in the Taylor expansion,
and derive a priori error estimates.
Section 4 is devoted to efficient solution strategies. 
 Finally, in Section 5, we present numerical tests for validating the theoretical results 
 on two- and three-dimensional test problems. The paper closes with some conclusions in Section 6. 
%
%
%

\section{The model problem}
\subsection{Preliminaries}
Let $\Omega$ be a bounded Lipschitz domain in $\mathbb{R}^d,\,d=2,3$, 
and let $\alpha=(\alpha_1,\ldots,\alpha_d)$ be a multi-index of non-negative integers 
$\alpha_1,\ldots,\alpha_d$
with degree $|\alpha| = \alpha_1+\cdots+\alpha_d$. 
For any  $\alpha$,  we define the differential operator
$D^\alpha=D_1^{\alpha_1}\ldots D_d^{\alpha_d}$,
with $D_j = \partial / \partial x_j$, $j=1,\ldots,d$, and $D^{(0,\ldots,0)}\phi=\phi$.
For a  non-negative integer $m$, let $C^{m}(\Omega)$ denote the space
of all functions $\phi:\Omega \rightarrow \mathbb{R}$, whose 
partial derivatives
$D^\alpha \phi$ of all orders $|\alpha| \leq m$ are continuous in $\Omega$. 
Let  $\ell$ be a non-negative integer. 
 As usual, 
 $L^2(\Omega)$ denotes the  space of square integrable functions  endowed with the norm
 $\|\phi\|_{L^2(\Omega)} = \big(\int_{\Omega}|\phi(x)|^2\,dx\big)^{\frac{1}{2}}$, 
 and  $L^{\infty}(\Omega)$ denotes the functions that are essentially bounded. Also 
 $
 	H^{\ell}(\Omega)=\{\phi\in L^2(\Omega): D^\alpha \phi \in L^{2}(\Omega),\,\text{for all}\,|\alpha| \leq \ell \},
$
denote the standard Sobolev spaces endowed with the following  norms 
 \begin{equation*}
 \|\phi\|_{H^{\ell}(\Omega)} = \big(\sum_{0\leq |\alpha| \leq \ell} \|D^{\alpha}\phi\|_{L^2(\Omega)}^p\big)^{\frac{1}{2}}.
 \end{equation*}
 $H^{\frac{1}{2}}(\partial \Omega)$ denotes the trace space of $H^{1}(\Omega)$.
 We identify $L^2$ and $H^{0}$ and also define  
$
 	H^{1}_0(\Omega) =\{\phi\in H^{1}(\Omega):\phi=0\,\text{on}\, \partial \Omega \}.
$
We recall H\"older's and Young's inequalities 

\begin{equation}\label{HolderYoung}
  \left|  \int_{\Omega}u v\,dx \right| \leq  \|u\|_{L^2(\Omega)}\|v\|_{L^2(\Omega)}
\quad \mbox{and} \quad
\left|  \int_{\Omega}u v\,dx \right|\leq \frac{\epsilon}{2}\|u\|^2_{L^2(\Omega)}+  \frac{1}{2\epsilon}\|v\|^2_{L^2(\Omega)},
\end{equation}
that hold for all $u\in L^2(\Omega)$ and $v\in L^2(\Omega)$ 
and for any fixed $\epsilon \in (0,\infty)$.
\par
For the derivation of the dG IgA scheme,  we will need 
appropriate  approximations of the solution and its normal fluxes
{\color{black}
on the boundary of the gap and overlapping regions. 
}
We will derive such approximations by means of  Taylor's theorem. 
We recall 
Taylor's formula with integral remainder
\begin{flalign}\label{7}
 f(1)=f(0)+\sum_{j=1}^{m-1}\frac{1}{j!}f^{(j)}(0)+\frac{1}{(m-1)!}\int_0^1s^{m-1}f^{(m)}(1-s)\,d s.
\end{flalign}
that holds for all $f\in C^m[0,1]$.
Now, let  $u\in C^m(\Omega)$ with $m\geq 2$ and
 $x,y\in \Omega$. 
We define $f(s)=u(x+s(y-x))$ and by the chain rule we obtain
\begin{flalign}\label{7_a}
 f^{(j)}(s)=\sum_{|\alpha|=j}\frac{j!}{\alpha !}D^\alpha  u(x +s(y-x))(y-x)^\alpha,
\end{flalign}
where $\alpha!=\alpha_1!\ldots\alpha_d!$ and $ (y-x)^\alpha=(y_{1}-x_{1})^{\alpha_1}\ldots
(y_{d}-x_{d})^{\alpha_d}$.
Combining (\ref{7}) and (\ref{7_a}), we can easily  obtain
\begin{subequations}\label{7_b}
\begin{align}\label{7_b_1}
u(y)=&u(x) +\nabla u(x)\cdot(y-x) + R^2u(y +s(x-y)),\\
 u(x)=&u(y) -\nabla u(y)\cdot (y-x)  + R^2 u(x +s(y-x)),
\end{align}
\end{subequations}
where $R^2 u(y +s(x-y))$ and $R^2 u(x +s(y-x))$ are the second order remainder terms defined by
\begin{subequations}\label{7_b_2}
\begin{align}
    R^2 u(y +s(x-y))=   \sum_{|\alpha|=2}(y-x)^\alpha\frac{2}{\alpha !}
        \int_{0}^1sD^\alpha  u(y +s(x-y))\,d s, \\
    R^2 u(x +s(y-x))=   \sum_{|\alpha|=2}(x-y)^\alpha\frac{2}{\alpha !}
        \int_{0}^1sD^\alpha  u(x +s(y-x))\,d s.   
\end{align}
\end{subequations}
By (\ref{7_b}) it follows that  
\begin{subequations}\label{7_1c}
\begin{align}
\label{7_1c_a}
	\nabla u(y) \cdot (y-x)  = &\nabla u(x) \cdot (y-x)  +\big(  R^2 u(x +s(y-x))+ R^2u(y +s(x-y)) \big ), \\
\label{7_1c_b}
	 -\big( u(x)-u(y)\big) = &\nabla u(x)\cdot (y-x)  + R^2u(y +s(x-y)).
\end{align}
\end{subequations}
%
%
%
\subsection{The elliptic diffusion problem}
We shall consider the following elliptic 
Dirichlet boundary value problem
\begin{align}\label{0}
-\mathrm{div}(\rho \nabla u) = f \; \text{in} \; \Omega
\quad \mbox{and}	\quad 
u  = u_D   \; \text{on} \; 
\partial \Omega 
\end{align}
as model problem. 
The weak formulation of the boundary value problem (\ref{0}) reads as follows:
for given source function $f \in L^2(\Omega)$ and Dirichlet data 
$u_D \in H^{1/2}(\partial \Omega)$, the trace space of $H^1(\Omega)$,
find a function $u\in H^1(\Omega)$ such that $u=u_D$ on $\partial \Omega$
and the variational identity
\begin{equation}
\label{4a}
a(u,\phi)=l_f(\phi),\;  \forall \phi \in 
H^{1}_{0}(\Omega), 
\end{equation}
is satisfied, where the bilinear form $a(\cdot,\cdot)$ and the linear form $l_f(\cdot)$
are defined by 
\begin{equation}
\label{4b}
a(u,\phi)=\int_{\Omega}\rho\nabla u\nabla\phi\,dx
\quad \mbox{and}\quad
l_f(\phi)=\int_{\Omega}f\phi\,dx,
\end{equation}
respectively. 
{\color{black}
The given diffusion coefficient $\rho \in L^{\infty}(\Omega)$ is assumed to be
uniformly positive and piecewise (patchwise, see below) constant. 
These assumptions ensure existence and uniqueness of the solution
due to Lax-Milgram's lemma.
}
For simplicity, we only consider  pure Dirichlet boundary conditions on $\partial \Omega$.
However, the analysis presented in our paper can easily be generalized to 
other constellations of boundary conditions which ensure existence and uniqueness
such as Robin or mixed boundary conditions.
\par
In what follows, positive constants $c$ and $C$ appearing in  inequalities are 
generic constants which do not depend on the mesh-size $h$. In many cases,  
we will indicate on what may the constants depend for an easier understanding of the proofs. 
Frequently, we will write $a\sim b$ meaning that $c \, a\leq b \leq C \, a$. 
\subsection{Decomposition into patches}
In many practical situations, the computational domain $\Omega$ has a 
multipatch representation, i.e., it is decomposed into $N$ non-overlapping patches 
$\Omega_1, \Omega_2,\ldots, \Omega_N$, also called subdomains:
\begin{align}\label{DecmbOmega}
\overline{\Omega}=\bigcup_{i=1}^N \overline{\Omega}_i, \quad \text{with}{\ } {\Omega}_i\cap{\Omega}_j=\emptyset,\,
\text{for}\, i\neq j.
\end{align}
We use the notation 
$\cal{T}_{H}(\Omega):=\{\Omega_1, \Omega_2,\ldots, \Omega_N\}$ 
for the decomposition  (\ref{DecmbOmega}), 
and we  denote the common interfaces by  
 $F_{ij}=\partial \Omega_i\cap \partial \Omega_j$, for $1 \leq i\neq j \leq N$. 
Essentially, the decomposition (\ref{DecmbOmega}) helps us to consider $N$ local problems posed on each patch, 
where interface conditions are used for coupling these local problems. 
Typically, the interface conditions across each $F_{ij}$ are derived by a theoretical study  of the elliptic 
problem (\ref{0}) and concern continuity requirements of the solution, e.g.,
\begin{align}\label{Interf_Cond}
	\llbracket u \rrbracket :=u_i-u_j=0\text{ on }F_{ij},\quad\text{and}\quad 
        \llbracket \rho \nabla u\rrbracket\cdot n_{F_{ij}} :=(\rho_i\nabla u_i-\rho_j\nabla u_j)\cdot n_{F_{ij}} = 0\text{ on }F_{ij},
\end{align}
 where $n_{F_{ij}}$ is the unit normal  vector on $F_{ij}$ with direction towards $\Omega_j$,
{\color{black}
and $u_i$ denote the restriction of $u$ to $\Omega_i$.
}
 Numerical schemes based on dG usually utilize  the interface conditions (\ref{Interf_Cond}) in order
to devise  numerical fluxes for coupling the local  problems, 
see, e.g., \cite{Maximiliam_Dryga_DG_DD,ERN_DGbook,Rivierebook}.
 Let 
 $\ell\geq 2$ be  an integer,  and let us define the broken Sobolev space
\begin{align}\label{01c_0}
	H^{\ell}(\cal{T}_{H}(\Omega))=\{u\in L^{2}(\Omega): u_i=u|_{\Omega_i}\in H^{\ell}(\Omega_i),\, \text{for}\,i=1,\ldots,N\}.
\end{align}

\begin{assume}\label{Assumption1}
 We assume that the  solution $u$ of (\ref{4a}) belongs to  $V= H^{1}(\Omega)\cap H^{\ell}(\cal{T}_H(\Omega))$ with some $\ell \geq  2$.
\end{assume}
\begin{remark}\label{Uregularity}
	For cases with high discontinuities of $\rho$, the solution $u$ of (\ref{4b})  does not generally have the regularity
	properties of Assumption \ref{Assumption1}. We study dG IgA methods for these 
	problems in \cite{LT:LangerToulopoulos:2014a}.  
\end{remark}

Using  $\cal{T}_{H}(\Omega)$ of (\ref{DecmbOmega}) and  the interface conditions (\ref{Interf_Cond}), the 
variational equation $(\ref{4a})$  can be rewritten as 
\begin{align}\label{Orig_weak_form_Decomp}
	\sum_{i=1}^N \int_{\Omega_i}\rho_i(x)\nabla u\nabla \phi\,dx
       -\sum_{F_{ij}}\int_{F_{ij}} \llbracket \rho \nabla u \phi\rrbracket \cdot n_{F_{ij}}\,d\sigma
       =\sum_{i=1}^N \int_{\Omega_i} f\phi\,dx,\quad \text{for}{\ } \phi\in H^{1}_0(\Omega).
\end{align}
%
\subsection{Non-matching parametrized interfaces}
In order to get a decomposition of the computational domain into subdomains 
{\color{black}
in the IgA context,
}
we first apply a segmentation procedure to the boundary representation of the domain. 
This gives us subdomains, which are topologically equivalent to a cube.
After that we define the control net and  we try to construct  parametrizations of 
the subdomains using superior finite dimensional spaces, e.g.,
 B-splines, NURBS, etc., see \cite{LT:Hughes_IGAbook_2009}.
In the  
{\color{black} ideal}
case, one obtains compatible parametrizations for the common interfaces of adjacent subdomains. The control
points on a face are appropriately matched with the control points of the adjoining face, 
which give an identical interface for the adjacent subdomains. 
\par
However, the resulting subdomain parametrizations  may not produce identical interfaces for adjacent patches.
{\color{black}
We refer to this phenomena as non-matching interface parametrizations
or segmentation crimes. 
}
This can happen in cases where  after the segmentation procedure 
{\color{black}
the control points defining a face are not in an appropriate  correlation  with the 
corresponding control points defining the face of the adjacent patch.
}
The result is a decomposition with the appearance of gap 
or/and overlapping regions between the adjacent patches.  
As a consequence, we cannot directly use the interface conditions (\ref{Interf_Cond}) if we want to derive a  numerical scheme on such domains, cf. \cite{LT:LangerToulopoulos:2014a}.
The interface conditions have to be appropriately modified in order to couple the 
local problems either separated  by gap regions or defined on overlapping regions. 
In \cite{HoferLangerToulopoulos_2015a} and  \cite{HoferToulopoulos_IGA_Gaps_2015a}, 
we have developed and thoroughly studied dG IgA schemes on decompositions including gap regions only. 
In this paper, we  focus mainly on the presentation of dG IgA methods on decompositions 
which include  overlapping regions.  
\subsection{B-spline spaces}
\label{Bsplinespace}
In this section, we briefly present the B-spline spaces and the form of the B-spline parametrizations
for  the physical  subdomains. 
We refer to \cite{LT:Hughes_IGAbook_2009}, 
\cite{CarlDeBoor_Splines_2001} and \cite{LT:Shumaker_Bspline_book} 
for a more detailed presentation. 
\par
Let us consider the unit cube $\widehat{\Omega}=(0,1)^d\subset \mathbb{R}^d$, which we will refer to as the parametric domain, and
let $\Omega_i$, $i=1,\ldots,N$,
be a decomposition of $\Omega$ as given in (\ref{DecmbOmega}). 
Let the integers $p$  and $n_k$  denote  the
 given  B-spline degree  and  the number of basis functions
 of the B-spline space that will be constructed in $x_k$-direction with $k=1,\ldots,d$.  
We introduce the $d-$dimensional  vector of knots 
$\mathbf{\Xi}^d_i=(\Xi_i^1,\ldots,\Xi_i^{k},\ldots,\Xi_i^d),$\, $k = 1,\ldots,d$, 
 with the particular components given by 
 $\Xi_i^{k}=\{0=\xi^{k}_1 \leq \xi^{k}_2 \leq \ldots\leq \xi^{k}_{n_k+p+1}=1\}$. 
 The components $\Xi_i^{k}$  of $\mathbf{\Xi}^d_i$  form 
a mesh  $T^{(i)}_{h_i,\widehat{\Omega}}=\{\hat{E}_m\}_{m=1}^{M_i}$ in $\widehat{\Omega}$,
where $\hat{E}_m$ are the micro elements and $h_i$ is the mesh size, which is
defined as follows. Given a micro element $\hat{E}_m\in T^{(i)}_{h_i,\widehat{\Omega}} $, 
we set $h_{\hat{E}_m}=\text{diam}(\hat{E}_m)=\max_{x_1,x_2 \in \overline{\hat{E}}_m}\|x_1-x_2\|_d $, where $\|.\|_d$
is the Euclidean norm in $\mathbb{R}^d$.
The subdomain mesh size $h_i$ is defined to be  $h_i= \max\{h_{\hat{E}_m}\}$.
We set $h=\max_{i=1,\ldots,N}\{h_i\}$.
We refer the reader to \cite{LT:Hughes_IGAbook_2009} for more information 
about the meaning of the knot vectors in CAD and IgA.

\begin{assume}\label{Assumption2}
	The  meshes $T^{(i)}_{h_i,\widehat{\Omega}}$ are quasi-uniform, i.e.,
	there exist a constant $\theta \geq 1$ such that 
	$\theta^{-1} \leq {h_{\hat{E}_m}}/{h_{\hat{E}_{m+1}}} \leq \theta$.
	Also, we assume that $h_i \sim h_j$ for $1 \leq i\neq j \leq N$.
\end{assume}
\vskip 0.0005cm
 Given the knot vector $\Xi_i^{k}$ in every direction $k=1,\ldots,d$, 
 we construct the associated univariate B-spline basis, 
 $\hat{\mathbb{B}}_{\Xi_i^{k},p}=\{\hat{B}_{1,k}^{(i)}(\hat{x}_k),\ldots,\hat{B}_{n_{k},k}^{(i)}(\hat{x}_k)\}$
 using the Cox-de Boor recursion formula, 
 see, e.g., 
\cite{LT:Hughes_IGAbook_2009} and \cite{CarlDeBoor_Splines_2001} 
for more details.
On  the mesh $T^{(i)}_{h_i,\widehat{\Omega}},$  
we define the multivariate
B-spline space 
$\hat{\mathbb{B}}_{\mathbf{\Xi}^d_i,k}$
to be the tensor-product of the corresponding univariate $\hat{\mathbb{B}}_{\Xi_i^{k},p}$ spaces. 
Accordingly,  the  B-spline basis of $\hat{\mathbb{B}}_{\mathbf{\Xi}^d_i,k}$ are defined by the tensor-product of the univariate B-spline basis functions, that is 
\begin{align}\label{0.00b1}
\hat{\mathbb{B}}_{\mathbf{\Xi}^d_i,p}=\otimes_{k=1}^{d}\hat{\mathbb{B}}_{\Xi_i^{k},p}
=\text{span}\{\hat{B}_{j}^{(i)}(\hat{x})\}_{{j}=1}^{n=n_1\cdot \ldots\cdot n_k\cdot \ldots\cdot n_d},
\end{align}
where each  $\hat{B}_{j}^{(i)}(\hat{x})$ has the form
\begin{align}\label{0.00b2}
\hat{B}_{j}^{(i)}(\hat{x})=&\hat{B}_{j_1}^{(i)}(\hat{x}_1)\cdot\ldots
           \cdot \hat{B}_{j_k}^{(i)}(\hat{x}_k)\cdot\ldots
           \cdot\hat{B}_{j_d}^{(i)}(\hat{x}_d), \,
           \text{with}\,\hat{B}_{j_k}^{(i)}(\hat{x}_k) \in \hat{\mathbb{B}}_{\Xi_i^{k},k}.
\end{align}
 In IgA framework, each $\Omega_i$ is considered as an image of a B-spline, NURBS, etc.,  parametrization mapping.
 Given the B-spline spaces and having defined the control points
 $\mathbf{C}_{j}^{(i)}$, we  parametrize each  subdomain 
 $\Omega_i$ by the  mapping
\begin{align}\label{0.0c}
 \mathbf{\Phi}_i: \widehat{\Omega} \rightarrow \Omega_i, \quad
 x=\mathbf{\Phi}_i(\hat{x}) = \sum_{{j=1}}^n \mathbf{C}^{(i)}_{j} \hat{B}_{j}^{(i)}(\hat{x})\in \Omega_i,
 \end{align}
\label{0.0c2}
 where $\hat{x} =\mathbf{\Phi}^{-1}_i(x)$, $i=1,\ldots,N$,
 cf. \cite{LT:Hughes_IGAbook_2009}. 
For $i=1,\ldots,N$, we construct the B-spline space $\mathbb{B}_{\mathbf{\Xi}^d_i,k}$ on $\Omega_i$ by
\begin{align}\label{0.0d2}
	\mathbb{B}_{\mathbf{\Xi}^d_i,p}:=\{B_{{j}}^{(i)}|_{\Omega_i}: B_{j}^{(i)}({x})=
  \hat{B}_{j}^{(i)}\circ\mathbf{\Phi}^{-1}_i({x}),{\ }\text{for}{\ }
  \hat{B}_{j}^{(i)}\in \hat{\mathbb{B}}_{\mathbf{\Xi}^d_i,p} \}. 
\end{align}
The global  B-spline space $V_{h}$ with components on every $\mathbb{B}_{\mathbf{\Xi}^d_i,p}$
is defined by
\begin{align}\label{0.0d1}
V_h:=V_{h_1}\cdot \ldots\cdot V_{h_N}:=\mathbb{B}_{\mathbf{\Xi}^d_1,p}\cdot \ldots\cdot  \mathbb{B}_{\mathbf{\Xi}^d_N,p}.
\end{align}
 
\begin{remark}\label{remark_00}
	As we point out in the previous subsection, the mappings in (\ref{0.0c}) 
		should 	provide    matching interface parametrizations. 
	Throughout the paper 
		we study the crime case where the mappings in (\ref{0.0c}) produce
	non-matching interface parametrizations.
\end{remark}
\begin{assume}\label{smooth_Phi_i}
	Assume that every $\mathbf{\Phi}_i,\, i=1,...,N$ is sufficiently smooth  and 
	there exist constants $0 < c < C$  such that $c \leq |detJ_{\mathbf{\Phi}_i} | \leq C$, where $J_{\mathbf{\Phi}_i}$ is the
        Jacobian matrix of $\mathbf{\Phi}_i$. 
\end{assume}
\vskip -0.5cm
{\color{black}
\begin{assume}\label{Assumption2_1}
For simplicity, we assume   that  $p\geq \ell$, cf.  Assumption \ref{Assumption1}. 
\end{assume}
}
\begin{figure}
 \begin{subfigmatrix}{4}
  \subfigure[]{\includegraphics[scale=0.35]{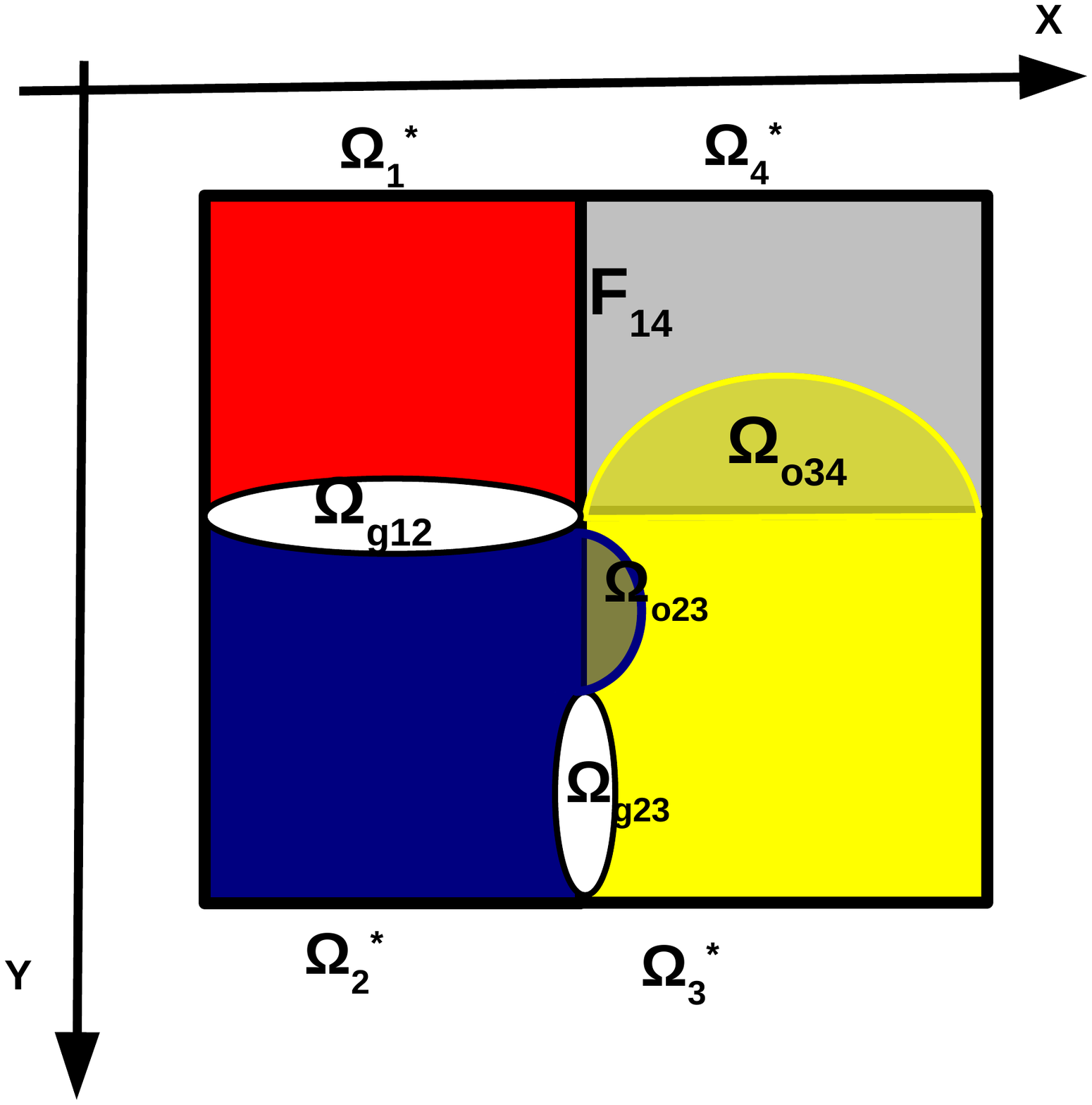}}
  \subfigure[]{\includegraphics[scale=0.35]{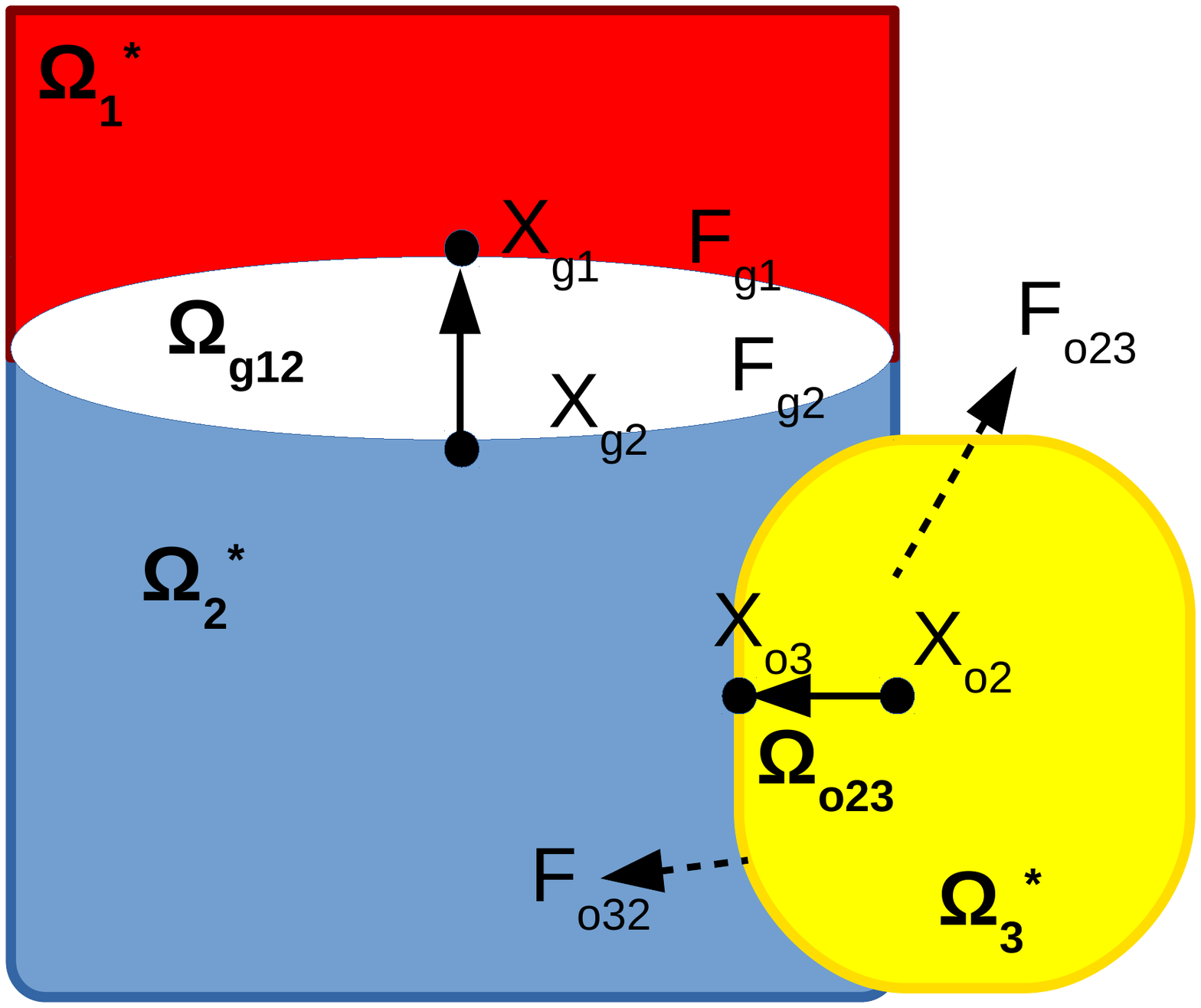}}
  \subfigure[]{\includegraphics[scale=0.35]{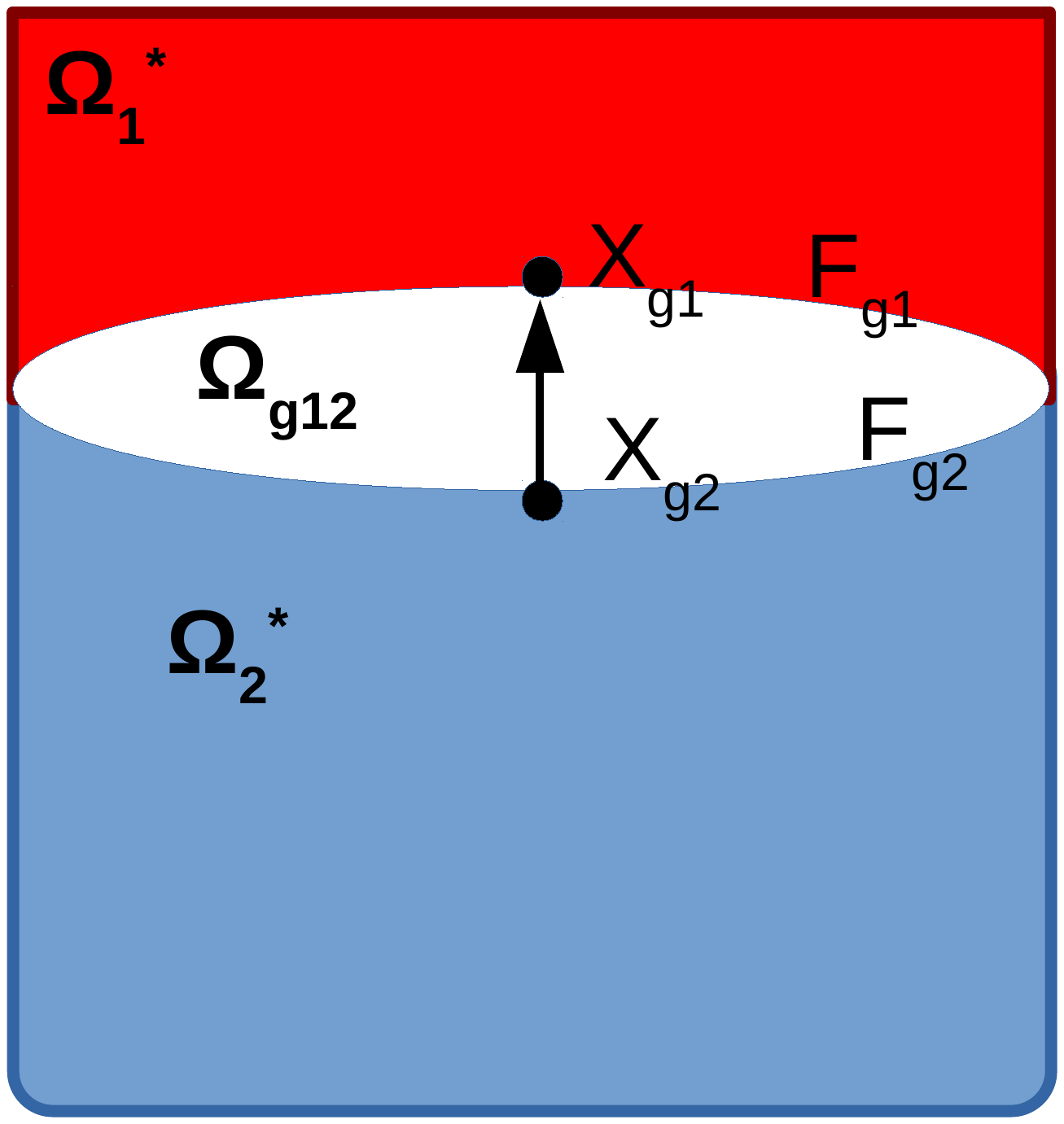}}
  \subfigure[]{\includegraphics[scale=0.35]{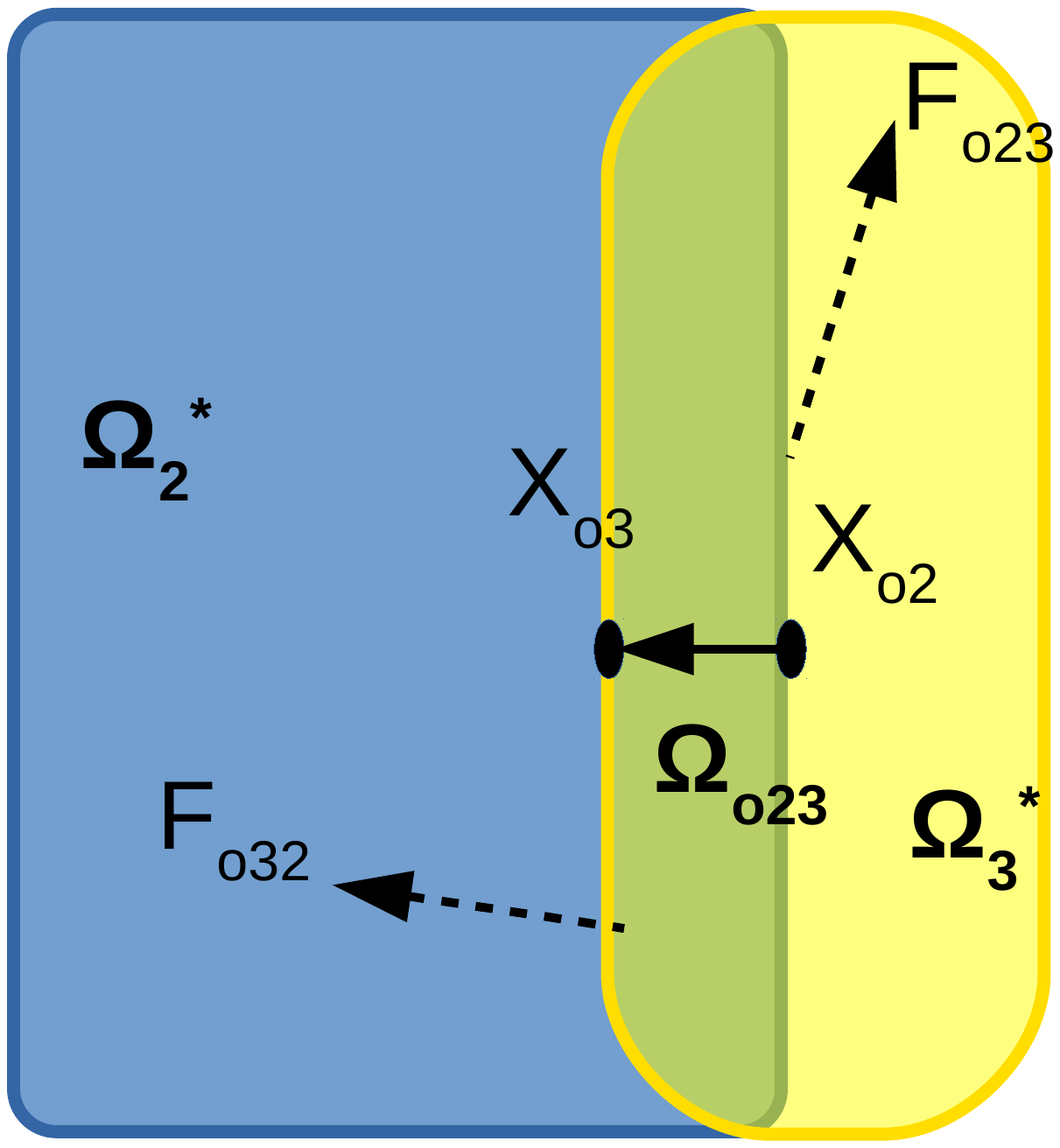}}
\end{subfigmatrix}
 \caption{ (a)  Illustration of a decomposition  including  gap and overlapping  regions,
           (b) the locations  of the diametrically opposite points on gap and overlapping boundaries,
           (c) a decomposition formed by two patches and a gap region,
           (d) a decomposition formed by two patches and a overlapping region, 
           }
 \label{Sub_doms_gap}
\end{figure}
\subsection{Gap regions}
\label{Gap_region}
In this section,  we describe a gap region which is located between two patches. 
Let $\Omega_1^*$ and $\Omega_2^*$ be two adjacent patches with the corresponding 
non-matching interface parametrizations
$\mathbf{\Phi}^*_1: \widehat{\Omega} \rightarrow \Omega^*_1$ and 
$\mathbf{\Phi}^*_2: \widehat{\Omega} \rightarrow \Omega^*_2$. Further, let $\Omega_{g12}$ to be the gab region such that
{\color{black}
$\overline{\Omega}_{g12} \subset \overline{\Omega}$ and $|\partial \Omega_{g12}\cap \partial \Omega| =0$, 
where $|\cdot|$ is the $(d-1)$-dimensional measure. }
We note that 
{\color{black}
$\overline{\Omega}^*_1 \cup \overline{\Omega}^*_2 \cup \overline{\Omega}_{g12} = \overline{\Omega}_1 \cup \overline{\Omega}_2$.
}
An illustration is given in  Figs. \ref{Sub_doms_gap}(b),(c). 
Without 
loss of generality,  we  consider the boundary of the gap region as
 $\partial \Omega_{g12}=F_{g1}\cup F_{g2}$, with  $F_{gi} \subset \Omega^*_i,\,i=1,2$, and we also assume that
 the face $F_{g2}$ is a {simple face}, meaning it can be described  as the set of points $(x,y,z)$ satisfying
\begin{equation}\label{5a0}
0\leq x  \leq x_{Mg},{\ } 0  \leq y \leq \psi_{g2}(x),{\ }  0  \leq z \leq \phi_{g2}(x,y),
\end{equation}
where $x_{Mg}$ is a fixed real number,  $\psi_{g2}$ and $\phi_{g2}$   are  given  smooth  functions, 
see Fig.~\ref{Sub_doms_gap}(c).
As a next step, we need to assign the points $x_{g2}\in F_{g2}$ to the points $x_{g1}\in F_{g1}$.
 We follow the same ideas as in  \cite{HoferLangerToulopoulos_2015a} and \cite{HoferToulopoulos_IGA_Gaps_2015a}. 
 Since  $F_{g2}$ is an 
 {\color{black} simple face}  and $F_{g1}$ is a 
  {\color{black} B-spline surface}, 
  due to the fact that it is the 
 the image  of some face  $\partial \widehat{\Omega}$ under the mapping $\mathbf{\Phi}^*_1$,
 we construct a parametrization of $F_{g1}$,  lets say
 $\mathbf{\Phi}_{g21}:F_{g2}\rightarrow F_{g1}$, 
  which is one-to-one and   defined as
 \begin{align}\label{parmatric_lr}
 x_{g2}\in F_{g2} \rightarrow \mathbf{\Phi}_{g21}(x_{g2}):=x_{g1}\in F_{g1},\quad \text{with }\quad
          \mathbf{\Phi}_{g21}(x_{g2})=x_{g2}+ \zeta_g(x_{g2})n_{F_{g2}},
 \end{align}
where $n_{F_{g2}}$ is the unit normal vector on $ F_{g2}$, see Fig. \ref{Sub_doms_gap}(c), and 
 $\zeta_g$ is a B-spline function. 
The parametrization $\mathbf{\Phi}_{g21}$ defined in (\ref{parmatric_lr}) helps us
to assign the diametrically opposite points located on $\partial \Omega_{g12}$,  
 see discussion in  \cite{HoferLangerToulopoulos_2015a} and \cite{HoferToulopoulos_IGA_Gaps_2015a}. 
We are only interested in small gap regions, see below (\ref{5b_1}), and, thereby, 
if $n_{F_{g1}}$ is the unit normal vector on $F_{g1}$,  we can suppose that $n_{F_{g2}} \approx -n_{F_{g1}}$.  Consequently,     
  we can define the mapping $\mathbf{\Phi}_{g12}:F_{g1}\rightarrow F_{g2}$  to be  
 \begin{align}\label{parmatric_rl}
        \mathbf{\Phi}_{g12}(x_{g1})= x_{g2},{\ }\text{with} \quad
        \mathbf{\Phi}_{g21}(x_{g2})=x_{g1}.
 \end{align}
 We note that both parametrizations $\mathbf{\Phi}_{g21}$  in (\ref{parmatric_lr}) and
  $\mathbf{\Phi}_{g12}$ in (\ref{parmatric_rl}) have been constructed under the consideration
  that one side is planar and $n_{F_{g2}} \approx -n_{F_{g1}}$. 
 As it has been explained in \cite{HoferLangerToulopoulos_2015a} and 
 \cite{HoferToulopoulos_IGA_Gaps_2015a},
 these parametrizations  simplify  the calculations and highlight 
 the main ideas of our approach.  Furthermore, they lead to an easy materialization of the whole method.  In  Section \ref{Section_numerics}, we present  numerical tests where all 
 the faces of the gap and overlapping regions   are curved surfaces.
 We finally  need to  quantify the size of the gap.
 Hence, we introduce  the gap width as
\begin{align}\label{5b_0}
d_g=&\max_{x_{g2}\in F_{g2}} |{x_{g2}}- \mathbf{\Phi}_{g21}(x_{g2})|.
\end{align}
We focus on   gap regions whose width decreases polynomially in $h$, that is,
\begin{align}\label{5b_1}
d_g \leq &  h^\lambda,\quad \text{with some}\quad  \lambda \geq  1.
\end{align}

\subsection{Overlapping  regions}
\label{overlapping_region}
We now describe the case of having a overlapping patch decomposition of $\Omega$. 
Again for simplicity in our analysis,  
we assume  that the overlapping region  is formed by involving  two patches and
it has the same form as the gap regions. 
Based on (\ref{DecmbOmega}), we suppose that there are two so called
\emph{physical} patches $\Omega_2$ and $\Omega_3$ that form a decomposition
of  $\Omega$, i.e., 
\begin{alignat}{2}\label{2.7_1}
 \overline{\Omega} = \overline{\Omega_2} \cup \overline{\Omega_3}, &\quad \overline{\Omega_2} \cap \overline{\Omega_3} = \emptyset, &
 {\ }\text{with} {\ } F_{23}=\partial \Omega_2 \cap \partial \Omega_3,
\end{alignat}
where $F_{23}$ is the  \emph{physical} interface between $\Omega_2$ and $\Omega_3$.
As we mentioned above, due to an incorrect segmentation procedure, we get 
two mappings, lets say $\mathbf{\Phi}_2^*:\widehat{\Omega}\rightarrow \Omega_2^*$ and 
$\mathbf{\Phi}_3^*:\widehat{\Omega}\rightarrow \Omega_3^*$, which cannot exactly parametrize the two physical patches 
${\Omega_2}$ and ${\Omega_3}$.
Let $\Omega_2^*$ and $\Omega_3^*$ 
be the two  patches  of the corresponding ``in-correct''
parametrizations, which form an overlapping decomposition of $\Omega$. Let 
 $\Omega_{o23}=\Omega_2^* \cap \Omega_3^*$ denote the overlapping region, see Fig.~\ref{Sub_doms_gap}(d), 
 and let $F_{oij}=\partial \Omega^*_{i}\cap \Omega^*_{j},$ with    $i,j=2,3$ and $i\neq j$, 
 denote the interior boundary faces of the patches which are related to the overlapping region. 
Furthermore,  let
${n}_{F_{oij}}$ denote the unit exterior normal vector to $F_{oij}$.
We assume that 
$\partial \Omega_{o23}=F_{o23}\cup F_{o32}$. 
{\color{black}
Without loss of generality, we
}
suppose that the following conditions hold: 
(i) the face $F_{o23}$ coincides with the physical interface, 
    i.e., $F_{o23}=F_{23}$, 
(ii) the face $F_{o23}$
    is a {simple face} and meaning that it can be described  as the set of points $(x,y,z)$ 
    satisfying the inequalities
\begin{equation}\label{5a0_o}
0\leq x  \leq x_{M_o},{\ } 0  \leq y \leq \psi_{o2}(x),{\ }  0 \leq z \leq \phi_{o2}(x,y),
\end{equation}
where $x_{Mo}$ is a fixed real number,  $\psi_{o2}$ and $\phi_{o2}$  
are  given  smooth  functions, see Fig.~\ref{Sub_doms_gap}(d). 
We  assign the  points located on $ F_{o23}$   to
the opposite points located on  $F_{o32}$, 
 by constructing   a parametrization for the face $F_{o32}$,  i.e.,
a  mapping 
$\mathbf{\Phi}_{o23}:F_{o23}\rightarrow F_{o32}$. 
In this way, we consider the face  $F_{o32}$ as the image 
{\color{black}
of $F_{o23}$ via the mapping  $\mathbf{\Phi}_{o23}$, 
i.e., $F_{o32}=\mathbf{\Phi}_{o23}(F_{o23})$.
}
Thus, each point
$x_{o3}\in F_{o32}$ becomes the image by means of $\mathbf{\Phi}_{o23}$ of a point $x_{o2}\in F_{o23}$, 
see Fig.~\ref{Sub_doms_gap}(e). 
{\color{black}
Using  the fact that $ F_{o32}$ is  a B-spline surface (it is  the image of a
face of $\partial \widehat{\Omega}$ under the mapping  $\mathbf{\Phi}_3^*$) and since 
we are interested  in  overlapping regions with  sufficient small widths, see below (\ref{5b_1_o}),  we define the mapping 
 $\mathbf{\Phi}_{o23}:F_{o23}\rightarrow F_{o32}$,  as
}
 \begin{align}\label{parmatric_lro}
 x_{o2}\in F_{o23} \rightarrow \mathbf{\Phi}_{o23}(x_{o2}):=x_{o3}\in F_{o32},\quad \text{with }\quad
          \mathbf{\Phi}_{o23}(x_{o2})=x_{o2}+ \zeta_o(x_{o2}){n}_{F_{o23}},
 \end{align}
where  
 $\zeta_o$ is a B-spline and ${n}_{F_{o23}}$ is the unit normal vector on ${F_{o23}}$.
In our analysis, we suppose that ${n}_{F_{o23}} \approx -{n}_{F_{o32}}$,  
and we define the mapping $\mathbf{\Phi}_{o32}:F_{o32}\rightarrow F_{o23}$  
as
 \begin{align}\label{parmatric_rlo}
        \mathbf{\Phi}_{o32}(x_{o3})= x_{o2},{\ }\text{with} \quad
        \mathbf{\Phi}_{o23}(x_{o2})=x_{o3}.
 \end{align}
Finally, we   need to 
 quantify the width of the overlapping region $\Omega_{o23}$, defined by
\begin{align}\label{5b_0_o}
d_o=&\max_{x_{o2}\in F_{o23}} |{x_{o2}}- \mathbf{\Phi}_{o23}(x_{o2})|.
\end{align}
We focus on  overlapping regions whose distance decreases polynomially in $h$, 
i.e.,
\begin{align}\label{5b_1_o}
d_o \leq &  h^\lambda,\quad \text{with some}\quad  \lambda \geq  1.
\end{align}
\subsection{Integrals on  the interior faces}
Let $F_{1}$ be the {simple face} of a gap or overlapping region and let 
 $F_2$ to be the corresponding opposite boundary faces which admits the  
 parametrization  $\mathbf{\Phi}_{12}:F_1\rightarrow F_2$ written as
$\mathbf{\Phi}_{12}(x_1,x_2)=(x_2(x_1,y_1),y_2(x_1,y_1),z_2(x_1,y_1))$, which is reduced to
$x_2=x_1, \,y_2=y_1$ and $z_2=\Phi_{12,z}(x_1,x_2)$, where $\Phi_{12,z}$ is identified by $\zeta_g$ in (\ref{parmatric_lr})
 or $\zeta_o$ in (\ref{parmatric_lro}). 
For a given smooth function $g$ on $F_{2}$, 
we have 
 \begin{alignat}{2} \label{U_o3U_o2_a2}
  \int_{F_{2}} g(x_{2},y_{2},z_{2})\,d \,F_{2} =&
  \int_{F_{1}} g(x_{1},y_{1},{\Phi}_{12,z}(x_{1},y_{1}))\,\|J\|\,d\,x_{1}\,d\,y_{1},
 \end{alignat}
 where $\|J\|=\sqrt{1+|\nabla{\Phi}_{12,z}(x_{1},y_{1})|^2}$.
 For simplicity,  we will  adopt the notation 
 \begin{subequations}\label{U_o3U_o2_a3}
 \begin{alignat}{2} 
  \int_{F_{2}} g\,d \,F_{2} :=& \int_{F_{2}} g(x_{2},y_{2},z_{2})\,d \,F_{2},{\ }\text{and}{\ }\\
 \int_{F_{1}} g(\mathbf{\Phi}_{12})\,\|J\|\,d\,F_{1}:=&
\int_{F_{1}} g(x_{1},y_{1},{\Phi}_{12,z}(x_{1},y_{1}))\,\|J\|\,d\,x_{1}\,d\,y_{1}.
 \end{alignat}
 \end{subequations}
 Under the   regularity of $\mathbf{\Phi}_{12}$, 
there are positive constants $c_m(\|J\|)$ and $C_M(\|J\|$ such that 
 \begin{alignat}{2} \label{U_o3U_o2_a4}
 C_m(\|J\|) \int_{F_{2}} g\,d \,F_{2} \leq 
 \int_{F_{1}} g(\mathbf{\Phi}_{12})\,d\,F_{1} 
 \leq C_M(\|J\|) \int_{F_{2}} g\,d \,F_{2}.
 \end{alignat}
Furthermore, let us denote 
the transpose of the Jacobian matrix of mapping $\mathbf{\Phi}_{12}$
 evaluated at $\mathbf{x}_{1}\in F_{1}$ by $D^\intercal\mathbf{\Phi}_{12}(\mathbf{x}_{1})$, and let $u$ be a smooth function defined on $\Omega$. Then an  application 
 of the chain rule yields 
 \begin{alignat}{2} \label{U_o3U_o2_a5}
 	(\nabla u)\circ \mathbf{\Phi}_{12} = (D^\intercal\mathbf{\Phi}_{12})^{-1}\nabla (u\circ \mathbf{\Phi}_{12}). 
 \end{alignat} 
 Now, by means of the assumptions imposed on the faces $F_{1}$ and $F_{2}$,  we have 
 $n_{F_{1}}=(0,0,1)$. 
 After some elementary calculations, 
 we can find  that
 \begin{alignat}{2} \label{U_o3U_o2_a6}
 \int_{F_{1}} \nabla u(\mathbf{\Phi}_{12}(\mathbf{x}_{1}))\cdot n_{F_{1}}\,d\,F_{1}=
 &\int_{F_{1}} \partial_z u \circ \mathbf{\Phi}_{12}(\mathbf{x}_{1})    \frac{\|J\|}{\|J\|}\,d\,F_{1} \sim 
 \int_{F_{2}} \partial_z u  \,d\,F_{2}.
 \end{alignat}
\section{\color{black} Discontinuous Galerkin IgA schemes for non-matching interfaces}
Let $\mathcal{T}^*_H(\Omega):=\cup_{i=1}^N\overline{\Omega_i^*}$ be 
an incorrect  decomposition of $\Omega$, which  may contain 
 overlapping and gap regions between the patches. 
One has to be careful in constructing the 
 numerical fluxes for the final dG IgA scheme. 
It is clear from the descriptions above that we cannot  directly apply  the numerical fluxes of the
dG IgA methods, which have been presented in 
\cite{LT:LangerToulopoulos:2014a} and  have been applied   on    matching parametrized interfaces.
  However, 
in \cite{HoferLangerToulopoulos_2015a} and  \cite{HoferToulopoulos_IGA_Gaps_2015a}, we recently presented 
dG IgA methods that can be applied on decompositions with gap regions. 
Since the (exact)  boundary data on the boundary of the gaps   are not known, 
we derived appropriate Taylor approximations for them. The Taylor approximations have been 
constructed using the  values of the solution, 
which are computed on the diametrically opposite points  on the gap boundary. 
These approximations are used to 
build up the  numerical fluxes on the boundary of the gaps.   
We here follow the same methodology
 in case of having overlapping regions too. 
%
\subsection{ Approximations of normal fluxes on the gap boundary}
Although dG IgA methods for decompositions including gap regions have been presented by the authors
in \cite{HoferLangerToulopoulos_2015a} and \cite{HoferToulopoulos_IGA_Gaps_2015a},
we repeat here the main parts for the completeness of the current paper. 
We present our analysis for the case of two patches described in Subsection \ref{overlapping_region}, 
see also Fig.~\ref{Sub_doms_gap}(c). 
For simplicity of the analysis below, we assume that the face $F_{g2}$ coincides with the physical interface $F_{12}$.
This implies that $\Omega_2^*=\Omega_2$. Let  $u_1,\,u_2$ and $u_{g12}$ denote the restrictions of the solution $u$
 to each domain $\Omega_1^*,\,\Omega_2^*$ and $\Omega_{g12}$, respectively. The same notation 
 is used to indicate the restrictions for the diffusion coefficient. By 
 the fact that $\Omega_2^*=\Omega_2$, we conclude that  $\rho_{g12}=\rho_1$.
By Assumption \ref{Assumption1}, we have that $u_i\in H^{\ell}(\Omega_i^*),$ where $\ell\geq 2$, $i=1,2$, and  the following interface conditions for the solution $u$ on the 
two faces of $\partial \Omega_{g12}$, see (\ref{Interf_Cond}), 
 \begin{align}\label{Interf_Cond_VFg}
	\llbracket u \rrbracket \big|_{F_{gi}}=0
	\quad\text{and}\quad 
        \llbracket \rho \nabla u\rrbracket\cdot n_{F_{gi}} \big|_{F_{gi}}=0\,
        {\ }\text{with}{\ }i=1,2.	
 \end{align}
Let $x_{g2}\in  F_{g2}$ and let $x_{g1}\in  F_{g1}$ 
be its corresponding diametrically opposite points such that $x_{g1}=\mathbf{\Phi}_{g21}(x_{g2})$ and  
$x_{g2}=\mathbf{\Phi}_{g12}(x_{g1}).$
Denoting $r_{g12} = x_{g1}-x_{g2}$ and having $ r_{g21}=-r_{g12}$  by means of 
(\ref{parmatric_lr}) and (\ref{parmatric_rl}),   we obtain that the normals are given by 
$ n_{F_{g2}}={r_{g12}}/{|r_{g12}|}$ and $ n_{F_{g1}}={r_{g21}}/{|r_{g21}|}$. For convenience,
we denote by  $\{\rho\}:=({\rho_{g12}+\rho_i})/{2}$ the average of the diffusion coefficient across 
$F_{gi},i=1,2$. Using the interface conditions     (\ref{Interf_Cond_VFg}),  and (\ref{7_1c}), we obtain the formulas
\begin{subequations}\label{7_1c_g}
\begin{alignat}{2}
\label{7_1c_g1}
	\nabla u_{g12}(x_{g2}) \cdot n_{F_{g2}} = \nabla u_{1}(x_{g1}) \cdot n_{F_{g2}} & -\frac{1}{|r_{g12}|}\big(  R^2 u_{1}(x_{g2} +s(x_{g1}-x_{g2}))+ R^2u_{g12}(x_{g1} +s(x_{g2}-x_{g1})) \big), \quad\\
\label{7_1c_g2}
	-\frac{1}{h} \big( u_{2}(x_{g2})-u_{1}(x_{g1})\big) = & \frac{|r_{g12}|}{h }\nabla u_{g12}(x_{g2})\cdot n_{F_{g2}}  + 
	\frac{1}{h} R^2u_{g12}(x_{g1} +s(x_{g2}-x_{g1})).
\end{alignat}
\end{subequations}
\subsection{The modified form on  decompositions with gaps}
\vskip -0.15cm
It has been shown in \cite{HoferLangerToulopoulos_2015a} that the solution $u$ of
(\ref{4a}) under the Assumption \ref{Assumption1} and (\ref{Interf_Cond_VFg}) satisfies 
 \begin{multline}\label{5a4}
  \int_{\Omega_2^*}\rho_2\nabla u\cdot \nabla \phi_h\,dx - 
   \int_{\partial \Omega_2^* \cap \partial \Omega}\rho_2\nabla u\cdot n_{\partial \Omega_2^*} \,\phi_h\,d\sigma 
 -  \int_{F_{g2}} \{\rho\nabla u\}\cdot n_{F_{g2}} \, \phi_h  \,d\sigma\\
  +\int_{\Omega_1^*}\rho_1\nabla u\cdot \nabla \phi_h\,dx - 
   \int_{\partial \Omega_1^*\cap\partial \Omega}\rho_1\nabla u\cdot n_{\partial \Omega_1^*} \,\phi_h\,d\sigma 
 - \int_{F_{g1}} \{\rho\nabla u\}\cdot n_{F_{g1}} \, \phi_h \,d\sigma  \\
= \int_{\Omega\setminus\overline{\Omega}_{g12}}f\phi_h\,dx,\quad  
\text{for all}{\ }  \phi_h\in V_h,
 \end{multline}
where the average of the traces across the interface is denoted by $\{\cdot\}$.
The variational form (\ref{5a4}) plays an essential role in
the derivation of the dG IgA scheme. 
 The normal flux terms  $\nabla u_{g12} \cdot n_{\partial \Omega_{g12}}$ appearing in (\ref{5a4}), e.g.,
 $\int_{F_{g2}}\{\rho\nabla u\}\cdot n_{F_{g2}}  \phi_h \,d\sigma=
 \int_{F_{g2}}\frac{1}{2}(\rho_2\nabla u_2+\rho_{g12}\nabla u_{g12})\cdot n_{F_{g2}}  \phi_h \,d\sigma$, 
 are  unknown.
They are going to be  approximated by means of the 
{
normal flux terms 
 $\rho_2\nabla u_{2} \cdot n_{F_{g2}}$ and $\rho_1\nabla u_{1} \cdot n_{F_{g1}}$.
}The adaption of these approximations in  (\ref{5a4}) helps us to couple the two different local problems
 on  $\Omega_2^*$ and $\Omega_1^*$, and finally, to construct the numerical fluxes on $F_{g1}$ and $F_{g2}$.
Using (\ref{Interf_Cond_VFg}) and (\ref{7_1c_g}), we can show that, 
see  \cite{HoferLangerToulopoulos_2015a} and \cite{HoferToulopoulos_IGA_Gaps_2015a},
\begin{multline}\label{7_d}
  \int_{F_{g2}}\big(\frac{\rho_2}{2}\nabla u_{2}(x_{g2}) +\frac{\rho_{g12}}{2}\nabla u_{g12}(x_{g2})\big)\cdot n_{F_{g2}}\phi_h  \\
  = \int_{F_{g2}}\Big(\frac{\rho_2}{2}\nabla u_{2}(x_{g2})+\frac{\rho_{g12}}{2}\nabla u_{1}(x_{g1})\Big)\cdot n_{F_{g2}}\phi_h  -
    \frac{\{\rho \}}{h}\big(u_{2}(x_{g2})-u_{1}(x_{g1})\big)\phi_h \,d\sigma \\
   -\int_{F_{g2}}\Big( \frac{\rho_{g12}}{2|r_{g12}|}R^2u_{g12}(x_{g1}+s(x_{g2}-x_{g1}))+ \frac{\rho_{g12}}{2|r_{g12}|} R^2u_{g12}(x_{g2}+s(x_{g1}-x_{g2})) \Big)\phi_h\,d\sigma \\
   +\int_{F_{g2}}\frac{ \{\rho\}}{h}\Big(|r_{g12}|\nabla u_{g12}\cdot n_{F_{g2}} + R^2u_{g12}(x_{g1}+s(x_{g2}-x_{g1}))\Big)\phi_h\,d\sigma,
\end{multline}
and similarly
\begin{multline}\label{7_d0}
 \int_{F_{g1}}\Big(\frac{\rho_1}{2}\nabla u_{1}(x_{g1}) +\frac{\rho_{g12}}{2}\nabla u_{g12}(x_{g1})\Big)\cdot n_{F_{g1}}\phi_h
 -\frac{ \{\rho\}} {h}\llbracket u(x_{g1})\rrbracket \phi_h\,d\sigma  \\
= \int_{F_{g1}}\Big(\frac{\rho_1}{2}\nabla u_{1}(x_{g1})+\frac{\rho_2}{2}\nabla u_{2}(x_{g2})\Big)\cdot n_{F_{g1}}\phi_h  -\frac{\{\rho\}}{h}\big(u_{1}(x_{g1})-u_{2}(x_{g2})\big)\phi_h \,d\sigma  \\
   -\int_{F_{g1}}\Big( \frac{\rho_{g12}}{2|r_{g21}|}R^2u_{g12}(x_{g2}+s(x_{g1}-x_{g2}))+ \frac{\rho_{g12}}{2|r_{g21}|} R^2u_{2}(x_{g1}+s(x_{g2}-x_{g1})) \Big)\phi_h\,d\sigma \\
   +\int_{F_{g1}}\frac{ \{\rho\}}{h}\Big(|r_{g21}|\nabla u_{g12}\cdot n_{F_{g1}} + R^2u_{g12}(x_{g2}+s(x_{g1}-x_{g2}))\Big)\phi_h\,d\sigma.
 \end {multline}
 Inserting (\ref{7_d}) and (\ref{7_d0}) into (\ref{5a4}), and after few calculations,  we obtain the following modified form  
%
 \begin{multline}\label{7_d1}
 \sum_{i=1}^2\int_{\Omega_i^*}\rho_i\nabla u\cdot \nabla \phi_h\,dx - 
   \int_{\partial \Omega_i^* \cap \partial \Omega}\rho_i\nabla u\cdot n_{\partial \Omega_i^*} \phi_h\,d\sigma 
 -\int_{F_{g2}}\Big(\frac{\rho_2}{2}\nabla u_{2}+\frac{\rho_{g12}}{2}\nabla u_{1}\Big)\cdot n_{F_{g2}}\phi_h  -
    \frac{\{\rho \}}{h}\big(u_{2}-u_{1}\big)\phi_h \,d\sigma \\
+   \int_{F_{g2}}\Big( \frac{\rho_{g12}}{2|r_{g12}|}R^2u_{g12}(x_{g1}+s(x_{g2}-x_{g1}))+ \frac{\rho_{g12}}{2|r_{g12}|} R^2u_{g12}(x_{g2}+s(x_{g1}-x_{g2})) \Big)\phi_h\,d\sigma  \\
-    \int_{F_{g2}}\frac{ \{\rho\}}{h}\Big(|r_{g12}|\nabla u_{g12}\cdot n_{F_{g2}} + R^2u_{g12}(x_{g1}+s(x_{g2}-x_{g1}))\Big)\phi_h\,d\sigma\\
-  \int_{F_{g1}}\Big(\frac{\rho_1}{2}\nabla u_{1}+\frac{\rho_2}{2}\nabla u_{2}\Big)\cdot n_{F_{g1}}\phi_h  -
    \frac{\{\rho\}}{h}\big(u_{1}-u_{2}\big)\phi_h \,d\sigma  \\
+   \int_{F_{g1}}\Big( \frac{\rho_{g12}}{2|r_{g21}|}R^2u_{g12}(x_{g2}+s(x_{g1}-x_{g2}))+ \frac{\rho_{g12}}{2|r_{g21}|} R^2u_{2}(x_{g1}+s(x_{g2}-x_{g1})) \Big)\phi_h\,d\sigma  \\
-   \int_{F_{g1}}\frac{ \{\rho\}}{h}\Big(|r_{g21}|\nabla u_{g12}\cdot n_{F_{g1}} + R^2u_{g12}(x_{g2}+s(x_{g1}-x_{g2}))\Big)\phi_h\,d\sigma= 
\int_{\Omega\setminus\overline{\Omega}_{g12}}f\phi_h\,dx, {\ }\text{for all}{\ }  \phi_h\in V_h.
 \end {multline}
%
\vskip -2cm
\subsection{The local problems on overlapping regions}
For the sake of simplicity, we present the analysis for the case of having only two patches. Let $\{\Omega_i\}_{i=2}^{N=3}$ be
a physical non-overlapping decomposition of $\Omega$ with  the interface $F_{23}$, see (\ref{DecmbOmega}). 
Then  (\ref{Orig_weak_form_Decomp}) is reduced to
\begin{align}\label{Orig_Local_Prb_A}
	\sum_{i=2,3} \int_{\Omega_i}\rho_i(x)\nabla u_i\nabla \phi\,dx
       -\int_{F_{23}} \llbracket \rho \nabla u \phi\rrbracket \cdot n_{F_{23}}\,d\sigma
       =\sum_{i=2,3} \int_{\Omega_i} f\phi\,dx,\quad 
        \text{for all}{\ } \phi\in H^1_0(\Omega), 
\end{align}
where the sub-index $i$ denotes the restriction of functions to the patch $\Omega_i$ 
for $i=2,3$. 
 Let $\Omega_i^*$, $i=2,3$, be two patches that form an overlapping decomposition of $\Omega$, with the overlapping region
 $\Omega_{o23}=\Omega_{2}^*\cap \Omega_{3}^*$, 
 see Fig.~\ref{Sub_doms_gap}(d). 
 Let $F_{oij}=\partial \Omega_i^* \cap \Omega_j^*,\,i,j=2,3$, $i\neq j$, be the interior faces that define $\partial \Omega_{o23}$. 
Without loss of generality, we suppose that $F_{o23}=F_{23}$, see (\ref{Orig_Local_Prb_A}), which in turn implies that
$\Omega_2^*=\Omega_2$ and $\Omega_3^*=\Omega_3\cup \Omega_{o23}$, see Figs. \ref{Sub_doms_gap}(d). 
We can view $\{\Omega_i^*\}_{i=2,3}$ as an extension of the physical decomposition $\{\Omega_i\}_{i=2,3}$ by $\Omega_{o23}$.
To proceed on   decomposition $\{\Omega_i^*\}_{i=2,3}$, 
we  consider  the following perturbed  local problems on $\Omega_2^*$ and $\Omega_3^*$. 
The sub-index indicates the restrictions of the quantities to the patches, 
and $u_{o,2}^*$ will denote the restriction of $u_2^*$ to $\Omega_{o23}$,
and $u_{o,3}^*$ the corresponding restriction of $u_3^*$ to  $\Omega_{o23}$.
 Given the source function $f$ and Dirichlet data $u_D $ we consider  the local problems:
find  $u_2^*\in H^{1}(\Omega_2^*)$ and $u_3^*\in H^{1}(\Omega_3^*)$ such that 
 \begin{subequations}\label{VF_o1_LocalProblems}
 \label{VF_o1_LocalProblems_1}
\begin{alignat}{2}
\begin{cases}
   a^{(2)}_{o,\rho_2}(u_2^*,\phi_2) =l^{(2)}_f(\phi_2), \quad\text{for all}\; \phi_2 \in H^{1}_0(\Omega_2^*)\\
 u_2^*=u_3^*,\,\text{on}\,F_{o23},\quad 
 u_2^*=u_D,\,\text{on}\,\partial \Omega_2^*\cap \partial \Omega,\\
 \text{where }{\ } a^{(2)}_{o,\rho_2}(u_2^*,\phi_2)=
 \int_{\Omega_2^*}\rho_2\nabla u_2^*\cdot \nabla \phi_2\,dx - 
   \int_{\partial \Omega_2^* \cap \partial \Omega}\rho_2\nabla u_2^*\cdot n_{\partial \Omega_2^*} \phi_2\,d\sigma 
 -  \int_{F_{o23}} \rho_2\nabla u_2^*\cdot n_{F_{o23}}  \phi_2  \,d\sigma,\\
  \text{and}\quad l^{(2)}_f(\phi_2)=\int_{\Omega_2^*}f\phi_2\,dx, 
 \end{cases}
 \end{alignat}
 \begin{alignat}{2}
 \label{VF_o1_LocalProblems_2}
 \begin{cases}
 a^{(3)}_{o,\rho_3}(u_3^*,\phi_3)=l^{(3)}_f(\phi_3), \quad\text{for all}\; \phi_3 \in H^{1}_0(\Omega_3^*)\\
     u_3^*=u_2^*,\,\text{on}\,F_{o32},\quad
 u_3^*=u_D,\,\text{on}\,\partial \Omega_3^*\cap \partial\Omega,\\
 \text{where}\quad a^{(3)}_{o,\rho_3}(u_3^*,\phi_3)=
 \int_{\Omega_3^*}\rho_3\nabla u_3^*\cdot \nabla \phi_3\,dx - 
   \int_{\partial \Omega_3^*\cap\partial \Omega}\rho_3\nabla u_3^*\cdot n_{\partial \Omega_3} \phi_3\,d\sigma 
 - \int_{F_{o32}} \rho_3\nabla u_3^*\cdot n_{F_{o32}} \phi_3 \,d\sigma,\\
  \text{and}\quad  l^{(3)}_f(\phi_3)=  \int_{\Omega_3^*}f^*\phi_3\,dx.
 \end{cases}
\end{alignat}
\end{subequations}
 \vskip -0.05cm
We note that (\ref{VF_o1_LocalProblems_2}) is not  equivalent to (\ref{Orig_Local_Prb_A}).  Furthermore, in (\ref{VF_o1_LocalProblems}), we set  $\rho_{o23}=\rho_3$.
 In general, the artificial problems defined in (\ref{VF_o1_LocalProblems})
 are not consistent with the original problem (\ref{Orig_Local_Prb_A}).
 Since the B-spline spaces are defined on 
 the overlapping domains $\Omega^*_2$ and $\Omega_3^*$, the dG IgA scheme will be determined based on
 variational problems (\ref{VF_o1_LocalProblems}). 
In correspondence with  Assumption \ref{Assumption1}, we make the following assumption. 
\begin{assume}\label{Wl2_u_o3}
Let $\ell \geq 2$ be an integer.
For the solutions of (\ref{VF_o1_LocalProblems}), we assume  that  $u_3^*\in H^{\ell}(\Omega_3^*)$ and
$u_2^* \in H^{\ell}(\Omega_2)$. 
\end{assume}
{
The interface conditions given  in (\ref{VF_o1_LocalProblems}) and  (\ref{Interf_Cond}), 
Assumption \ref{Wl2_u_o3} and Assumption \ref{Assumption1} yield
%
\begin{align}\label{Interf_Cond_VFo}
       \begin{cases} 
    u_2 =  u_2^*,& \quad  \text{on}\quad  \Omega_2,\\
    u_3 =  u_3^*,& \quad  \text{on}\quad  \Omega_3,\\
  (\rho_3 \nabla u^*_{o,3} -\rho_3 \nabla u_{3})\cdot n_{F_{o23}}=
  (\rho_2 \nabla u_{o,2}^* -\rho_3 \nabla u_3^*) \cdot n_{F_{o23}} =0, & \quad  \text{on}\quad F_{o23}.
  \end{cases}
\end{align}
%
}
Now, we show that, in the limit case of $d_o\rightarrow 0$, we can recover the 
physical continuity conditions across the interfaces, see (\ref{Interf_Cond}). 
%
If $x_{o2}$ and $x_{o3}$ are two opposite points on $\partial \Omega_{o23}$, see Fig. \ref{Sub_doms_gap}(d), then by   (\ref{5b_0_o}) we have 
that   $ |x_{o2}-x_{o3}|\leq d_o$, and, consequently, 
we have 
\begin{alignat}{2}\label{Cont_u_3}
	|u_3^*(x_{o2})-u_2^*(x_{o3})|=|u_3^*(x_{o2})-u_3^*(x_{o3})|  {\xrightarrow{d_o\rightarrow 0} } 0,
\end{alignat}
due to the continuity of $u_3^*$. 
 \begin{proposition}\label{Prop1_1}
    	Let $\Omega_{o23}$ be the overlapping region  with $\partial \Omega_{o23}=F_{o23}\cup F_{o32}$,   and
    	let the parametrizations $\mathbf{\Phi}_{o23}$ and $\mathbf{\Phi}_{o32}$ be defined as in
    	(\ref{parmatric_lro}) and in (\ref{parmatric_rlo}). Then there exists a positive constant
    	 $C_1$   	 such that 
    	\begin{align}\label{7_d0_a1}
   \Big|\int_{F_{o32}}\rho_3\nabla u^*_{o,3}\cdot n_{F_{o23}}\,d F_{o32} -\int_{F_{o32}}\rho_3\nabla u^*_{o,3}\cdot 
   n_{F_{o32}}\,d F_{o32} \Big| \leq &
  C_{1} d_o \|\rho_3\nabla u^*_{o,3} \|_{L^2(F_{o32})},
   \end{align}
   where the normal vectors  $n_{F_{o23}}$ and $n_{F_{o32}}$ are defined towards the same direction. 
    \end{proposition}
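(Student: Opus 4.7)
The plan is to rewrite the left-hand side as a single integral of $\rho_3\nabla u^*_{o,3}$ against the \emph{difference} of the two unit normal fields, then apply H\"older's inequality and reduce the claim to a pointwise geometric bound on how much the normals of $F_{o23}$ and $F_{o32}$ can differ when the overlap has width at most $d_o$. First, by linearity the quantity to be estimated equals
\begin{equation*}
 \int_{F_{o32}}\rho_3\nabla u^*_{o,3}\cdot\bigl(n_{F_{o23}}-n_{F_{o32}}\bigr)\,dF_{o32},
\end{equation*}
where $n_{F_{o23}}$ is interpreted as the unit normal to the simple face $F_{o23}$ taken at the corresponding preimage $\mathbf{\Phi}_{o32}(x_{o3})\in F_{o23}$ and oriented as in the hypothesis. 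The first inequality in (\ref{HolderYoung}) applied componentwise then gives
\begin{equation*}
 \Bigl|\int_{F_{o32}}\rho_3\nabla u^*_{o,3}\cdot\bigl(n_{F_{o23}}-n_{F_{o32}}\bigr)\,dF_{o32}\Bigr|
 \leq \|\rho_3\nabla u^*_{o,3}\|_{L^2(F_{o32})}\,\|n_{F_{o23}}-n_{F_{o32}}\|_{L^2(F_{o32})},
\end{equation*}
so the entire content of the proposition reduces to bounding $\|n_{F_{o23}}-n_{F_{o32}}\|_{L^2(F_{o32})}$ by $C\,d_o$ up to a constant depending only on the fixed geometry.

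Second, I would exploit the explicit form of $\mathbf{\Phi}_{o23}$ in (\ref{parmatric_lro}). Working in local Cartesian coordinates aligned so that the simple face $F_{o23}$ has $n_{F_{o23}}$ as its (approximately constant) normal direction, the opposite face $F_{o32}$ is precisely the graph of the B-spline height function $\zeta_o$ over $F_{o23}$, with $\|\zeta_o\|_{L^\infty(F_{o23})}\leq d_o$ by (\ref{5b_0_o}). A direct computation of the tangent vectors to this graph and their generalized cross product, of the same flavour as the area-element derivation behind (\ref{U_o3U_o2_a2})--(\ref{U_o3U_o2_a6}), yields
\begin{equation*}
 n_{F_{o32}} \;=\; \frac{\,n_{F_{o23}} - \tau(\nabla\zeta_o)\,}{\sqrt{1+|\nabla\zeta_o|^2}},
\end{equation*}
where $\tau(\nabla\zeta_o)$ is a tangential vector field of magnitude $|\nabla\zeta_o|$. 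A Taylor expansion of the denominator then produces the pointwise bound $|n_{F_{o23}}-n_{F_{o32}}|\leq C\,|\nabla\zeta_o|$. Under Assumption \ref{smooth_Phi_i} on $\mathbf{\Phi}^*_3$ and the smoothness of $\zeta_o$, I would invoke $\|\nabla\zeta_o\|_{L^\infty(F_{o23})}\leq C\,d_o$, push the estimate forward to $F_{o32}$ by means of the Jacobian equivalence (\ref{U_o3U_o2_a4}), and absorb $|F_{o32}|^{1/2}$ into the geometric constant to obtain $\|n_{F_{o23}}-n_{F_{o32}}\|_{L^2(F_{o32})}\leq C\,d_o$.

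The main obstacle is the passage from $\|\zeta_o\|_{L^\infty}\leq d_o$ to $\|\nabla\zeta_o\|_{L^\infty}\leq C\,d_o$: a purely algebraic inverse-inequality bound for the B-spline $\zeta_o$ on a mesh of size $h$ would only give $\|\nabla\zeta_o\|_{L^\infty}\lesssim d_o/h$, which is too weak by a factor of $h$. The statement therefore implicitly relies on the mild geometric regularity used in the companion works \cite{HoferLangerToulopoulos_2015a,HoferToulopoulos_IGA_Gaps_2015a}, namely that the two faces $F_{o23}$ and $F_{o32}$ are uniformly smooth with curvatures bounded independently of $h$, so that the B-spline coefficients defining $\zeta_o$ scale linearly with $d_o$ rather than being of $\mathcal{O}(1)$ amplitude. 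Once this is either made explicit or folded into the geometric constant $C_1$, the combination of the two steps above closes the proof.
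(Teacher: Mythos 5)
The paper does not actually prove Proposition~\ref{Prop1_1}: it only refers to the companion work \cite{HoferLangerToulopoulos_2015a} and asserts that the gap-region argument carries over. Your proof is therefore the only complete argument on the table, and it takes the natural route: since both integrals are over $F_{o32}$ with the same integrand $\rho_3\nabla u^*_{o,3}$, the difference collapses to $\int_{F_{o32}}\rho_3\nabla u^*_{o,3}\cdot(n_{F_{o23}}-n_{F_{o32}})\,dF_{o32}$, Cauchy--Schwarz peels off $\|\rho_3\nabla u^*_{o,3}\|_{L^2(F_{o32})}$, and the problem reduces to the pointwise estimate $|n_{F_{o23}}-n_{F_{o32}}|\lesssim d_o$. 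Your graph computation $n_{F_{o32}}=\bigl(n_{F_{o23}}-\tau(\nabla\zeta_o)\bigr)/\sqrt{1+|\nabla\zeta_o|^2}$ is correct (note that $n_{F_{o23}}=(0,0,1)$ is constant here, so the issue of where to evaluate it is moot), and it correctly reduces everything to $\|\nabla\zeta_o\|_{L^\infty}\leq C\,d_o$.

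Your final paragraph is the most valuable part of the proposal: you are right that $\|\zeta_o\|_{L^\infty}\leq d_o$ alone does \emph{not} imply $\|\nabla\zeta_o\|_{L^\infty}\leq C\,d_o$ (an inverse estimate only gives $d_o/h$, and a $\zeta_o$ oscillating at scale $h$ with amplitude $d_o$ would break the proposition). This missing implication is not a defect of your argument but an implicit hypothesis of the paper itself --- it is essentially the quantitative content of the informal assumption ``$n_{F_{o23}}\approx -n_{F_{o32}}$'' made in Section~2.7, and of the requirement that the B-spline coefficients of $\zeta_o$ scale with $d_o$. Had you silently asserted $\|\nabla\zeta_o\|_{L^\infty}\leq C\,d_o$ I would call it a gap; since you isolate it, name the inverse-inequality obstruction, and attribute it to the geometric regularity assumed in \cite{HoferLangerToulopoulos_2015a,HoferToulopoulos_IGA_Gaps_2015a}, the proof is as complete as the statement's hypotheses permit. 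The only cosmetic remark is that the constant $C_1$ also absorbs $|F_{o32}|^{1/2}$ and the Jacobian bounds from (\ref{U_o3U_o2_a4}), which you already note.
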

  \begin{proof} The proof is given in \cite{HoferLangerToulopoulos_2015a} for the case of gap regions. The same arguments 
  can be applied for the case under consideration.
  $\BLACKBOX$
  \end{proof}%
\begin{lemma}
	Let Assumptions \ref{Assumption1} and \ref{Wl2_u_o3} hold. Then there is a positive constant\\
	$C(\|u^*_{o,3}\|_{W^{2,2}(\Omega_{o23})},\|\nabla u^*_{o,3} \|_{L^2(F_{o23})})$ independent of $h$, such that
	\begin{alignat}{2} \label{U_o3U_o2_2}
        \| (\rho_3\nabla u^*_{o,3}-\rho_2\nabla u_{o,2}^*)\cdot n_{F_{o32}} \|_{L^2(F_{o32})} & \leq C d_o.
	\end{alignat}
\end{lemma}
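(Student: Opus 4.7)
The plan is to exploit a first-order Taylor expansion of the vector field $w := \rho_3\nabla u^*_{o,3} - \rho_2\nabla u^*_{o,2}$ between matching points on $F_{o23}$ and $F_{o32}$, combined with the interface/continuity conditions (\ref{Interf_Cond_VFo}) on $F_{o23}$ and the smallness of the overlap width $d_o$ from (\ref{5b_0_o}). For each $x_{o3}\in F_{o32}$ I pair it with $x_{o2}=\mathbf{\Phi}_{o32}(x_{o3})\in F_{o23}$ so that $|x_{o3}-x_{o2}|\le d_o$, and split
\begin{equation*}
w(x_{o3})\cdot n_{F_{o32}}(x_{o3}) \,=\, \bigl[w(x_{o3})-w(x_{o2})\bigr]\cdot n_{F_{o32}}(x_{o3}) \,+\, w(x_{o2})\cdot n_{F_{o32}}(x_{o3}).
\end{equation*}

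First I would treat the boundary term at $x_{o2}$. The Dirichlet coupling $u^*_2=u^*_3$ on $F_{o23}$ from (\ref{VF_o1_LocalProblems}) yields the tangential-gradient equality $\nabla_T u^*_{o,2}=\nabla_T u^*_{o,3}$ along $F_{o23}$, while the flux matching in (\ref{Interf_Cond_VFo}) gives $(\rho_2\nabla u^*_{o,2}-\rho_3\nabla u^*_{o,3})\cdot n_{F_{o23}}=0$ there. Combining these two identities one obtains the clean identity $w(x_{o2})=(\rho_3-\rho_2)\nabla_T u^*_{o,3}(x_{o2})$, which is \emph{purely tangential} to $F_{o23}$ at $x_{o2}$ and \emph{depends only on $u^*_{o,3}$}. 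The geometric approximation $n_{F_{o32}}(x_{o3})\approx -n_{F_{o23}}(x_{o2})$ used throughout Subsection~\ref{overlapping_region} and quantified in Proposition~\ref{Prop1_1} provides $|n_{F_{o32}}(x_{o3})+n_{F_{o23}}(x_{o2})|\le C d_o$ under the smoothness of $\mathbf{\Phi}_{o23}$. Hence $|w(x_{o2})\cdot n_{F_{o32}}(x_{o3})|\le C|\rho_3-\rho_2|\,d_o\,|\nabla u^*_{o,3}(x_{o2})|$, which after integration over $F_{o32}$ (and a pullback to $F_{o23}$ via (\ref{U_o3U_o2_a4})) contributes $C d_o \|\nabla u^*_{o,3}\|_{L^2(F_{o23})}$.

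Next I would bound the Taylor remainder term. The fundamental theorem of calculus along the segment $\gamma(t)=x_{o2}+t(x_{o3}-x_{o2})$, $t\in[0,1]$, entirely contained in $\overline{\Omega}_{o23}$, gives $[w(x_{o3})-w(x_{o2})]\cdot n_{F_{o32}}(x_{o3})=\int_0^1 Dw(\gamma(t))(x_{o3}-x_{o2})\cdot n_{F_{o32}}(x_{o3})\,dt$, whose modulus is bounded by $d_o\int_0^1 |Dw(\gamma(t))|\,dt$ with $|Dw|\lesssim |D^2 u^*_{o,3}|+|D^2 u^*_{o,2}|$. Squaring, integrating over $F_{o32}$, applying Cauchy--Schwarz in $t$, and then a Fubini/co-area argument on the thin strip of thickness at most $d_o$ swept out by the segments inside $\Omega_{o23}$ converts the line integrals into a solid $L^2(\Omega_{o23})$ integral, yielding a contribution of order $d_o \|u^*_{o,3}\|_{W^{2,2}(\Omega_{o23})}$ (with an analogous $u^*_{o,2}$ term that will need to be absorbed).

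The main obstacle I anticipate is producing a final constant that depends only on $u^*_{o,3}$, since the Taylor-remainder step a priori brings in $\|D^2 u^*_{o,2}\|_{L^2(\Omega_{o23})}$. This is eliminated by invoking standard elliptic regularity for the local problem (\ref{VF_o1_LocalProblems_1}) on the thin strip $\Omega_{o23}$, where the Dirichlet trace of $u^*_{o,2}$ on $F_{o23}$ is exactly $u^*_{o,3}$ and the data $f,\rho_2$ are fixed, so $\|u^*_{o,2}\|_{W^{2,2}(\Omega_{o23})}$ is controlled by $\|u^*_{o,3}\|_{W^{2,2}(\Omega_{o23})}$ plus problem data; absorbing the latter into the implicit constant gives a bound of the claimed form $Cd_o$. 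A secondary technical point is justifying the co-area step and the normal-approximation error uniformly over $F_{o32}$, both of which follow from Assumption~\ref{smooth_Phi_i} and the smoothness assumptions on $\zeta_o$.
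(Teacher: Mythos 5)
Your proposal follows essentially the same route as the paper's proof: pair $x_{o3}\in F_{o32}$ with $x_{o2}\in F_{o23}$ via the overlap parametrization, exploit the flux-matching condition (\ref{Interf_Cond_VFo}) at $x_{o2}$, and split the error into a gradient-difference term along the connecting segment (bounded via the fundamental theorem of calculus by $\|u\|_{W^{2,2}(\Omega_{o23})}$ over the thin strip) plus a normal-rotation term (bounded via $|n_{F_{o32}}+n_{F_{o23}}|\lesssim d_o$ as in Proposition \ref{Prop1_1}); your observation that $w(x_{o2})=(\rho_3-\rho_2)\nabla_T u^*_{o,3}(x_{o2})$ is tangential is just a cleaner packaging of the paper's add-and-subtract decomposition into the terms $T_1$--$T_4$. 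The one place you genuinely depart from the paper is the final elliptic-regularity argument on the thin strip $\Omega_{o23}$ used to trade $\|u^*_{o,2}\|_{W^{2,2}(\Omega_{o23})}$ for $\|u^*_{o,3}\|_{W^{2,2}(\Omega_{o23})}$: this step is both unnecessary and fragile, since regularity constants on a domain of thickness $d_o\to 0$ are not uniform, and $\Omega_{o23}$ is a boundary strip of $\Omega_2^*$ rather than a domain on which (\ref{VF_o1_LocalProblems_1}) is posed. The paper avoids this entirely by noting that $u^*_{o,2}=u_2|_{\Omega_{o23}}$ is the restriction of the physical solution, whose $H^2$-norm on $\Omega_{o23}\subset\Omega_2$ is controlled directly by Assumption \ref{Assumption1} and absorbed into the constant (the terms $T_3$, $T_4$ are simply ``bounded analogously''); you should do the same.
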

\begin{proof}
Let $x_{o3}=\mathbf{\Phi}_{o23}(x_{o2})$. 
	Using the interface conditions (\ref{Interf_Cond}) and (\ref{Interf_Cond_VFo})
	as well as  (\ref{U_o3U_o2_a2}), we get
\begin{multline*}
\| (\rho_3\nabla u^*_{o,3}-\rho_2\nabla u_{o,2}^*)\cdot n_{F_{o32}} \|^2_{L^2(F_{o32} )}
\leq  2 \int_{F_{o23}} \big((\rho_3\nabla u^*_{o,3}(\mathbf{\Phi}_{o23})-\rho_2\nabla u_{o,2}^*(\mathbf{\Phi}_{o23})) \cdot n_{F_{o32}}\big)^2 \|J\|
\,d \,F_{o23} \\
\leq  4 \int_{F_{o23}} \Big(\big(\rho_3\nabla u^*_{o,3}(\mathbf{\Phi}_{o23})\cdot n_{F_{o32}}-\rho_3\nabla u_{3}(x_{o2}) \cdot n_{F_{o23}}\big)^2 + 
      \big(\rho_2\nabla u_{o,2}^*(\mathbf{\Phi}_{o23})\cdot n_{F_{o32}}-\rho_2\nabla u_{2}(x_{o2}) \cdot n_{F_{o23}}\big)^2\Big)\|J\| \,d \,F_{o23} \\
\leq  4 \int_{F_{o23}} \Big(\big(\rho_3\nabla u^*_{o,3}(\mathbf{\Phi}_{o23})\cdot n_{F_{o32}}-\rho_3\nabla u^*_{o,3}(x_{o2}) \cdot n_{F_{o23}}\big)^2 + 
      \big(\rho_2\nabla u_{2}(\mathbf{\Phi}_{o23})\cdot n_{F_{o32}}-\rho_2\nabla u_{2}(x_{o2}) \cdot n_{F_{o23}}\big)^2\Big)\|J\| \,d \,F_{o23} \\
\leq   8
\int_{F_{o23}} \Big(\rho_3^2\big( (\nabla u^*_{o,3}(\mathbf{\Phi}_{o23})-\nabla u^*_{o,3}(x_{o2})) \cdot n_{F_{o32}}
                    +\nabla u^*_{o,3}(x_{o2}) \cdot ( n_{F_{o32}} - n_{F_{o23}})\big)^2 + \\
  \qquad     \rho_2^2\big( (\nabla u_{2}(\mathbf{\Phi}_{o23})-\nabla u_{2}(x_{o2})) \cdot n_{F_{o32}}
                    +\nabla u_{2}(x_{o2}) \cdot ( n_{F_{o32}} - n_{F_{o23}})\big)^2 \Big)\|J\| \,d \,F_{o23} \\
\leq  8 \max(\rho_3^2,\rho^2_2)(T_1+T_2+T_3+T_4).\qquad
\end{multline*}

We now proceed to bound each of the $T_i,\,i=1,\ldots,4$ terms.
For the term $T_1$, we have
\begin{alignat}{2} \label{U_o3U_o2_3}
	T_1 \leq & \int_{F_{o32}}\Bigg|\int_{x_{o2}}^{\mathbf{\Phi}_{o23}} \Big( \frac{\partial}{\partial t}\nabla u^*_{o,3}(t)\Big)\,dt\Bigg|^2\,d \,F_{o23} \leq 
	d_o\,\int_{F_{o32}}\int_{x_{o2}}^{\mathbf{\Phi}_{o23}} \Big( \frac{\partial }{\partial t}\nabla u^*_{o,3}(t)\Big)^2\,dt\,d \,F_{o23} \\
	\nonumber
	                \leq & d_o\|u^*_{o,3}\|_{W^{2,2}(\Omega_{o23})}.
\end{alignat}
{For the term $T_2$, we use (\ref{7_d0_a1}) and we immediately have} 
$
T_2 \leq  C d_o^2\, \|\nabla u^*_{o,3} \|^2_{L^2(F_{o23})}. 
$
The terms $T_3$ and $T_4$ can be analogously bounded. Gathering  the previous bounds, we derive (\ref{U_o3U_o2_2}).
$\BLACKBOX$
\end{proof}  
Thus, by (\ref{Interf_Cond_VFo}), (\ref{Cont_u_3}) and (\ref{U_o3U_o2_2}), we infer  that 
	 the conditions 	(\ref{Interf_Cond}) are recovered  	as $d_o \rightarrow 0$. 
\begin{remark}\label{remark_2}
	Under the assumption 
	{$n_{F_{o32}} \approx - n_{F_{o23}} $}, 
	the terms $T_2$ and $T_4$  can be ignored 
	from the previous estimates.
\end{remark}
%
%
\subsection{Approximations of  normal fluxes and the modified form on overlapping regions}
The normal fluxes in (\ref{VF_o1_LocalProblems}) on the faces $F_{o23}$ and $F_{o32}$ of the overlapping boundary, 
must be appropriately  modified in order to couple the two local problems.
Then, we use these modifications in order to introduce Taylor approximations of the 
normal fluxes and to construct finally the numerical fluxes on $\partial \Omega_{23}$, which in turn  
couple the local patch-wise discrete problems. 
\subsubsection{\color{black} Approximations of normal fluxes by Taylor expansions.}
  Let $x_{o2}\in F_{o23}$ and $x_{o3}\in F_{o32}$ be such that 
 $ x_{o3}=\mathbf{\Phi}_{o23}(x_{o2}) $.
 Denoting $r_{o23}=x_{o2}-x_{o3}$ and using the assumption that $r_{o23}=-r_{o32}$, we obtain that
 $n_{F_{o23}}=\frac{r_{o23}}{|r_{o23}|}=-n_{F_{o32}}$.
%
Using the interface conditions of (\ref{VF_o1_LocalProblems}),  (\ref{Interf_Cond_VFo}) and
(\ref{7_1c}),    
{\color{black}
we obtain the formulas
}
\begin{subequations}\label{VF_o1_1}
\begin{align}
\label{VF_o1_1_a}
	\nabla u^{*}_3(x_{o3}) \cdot n_{F_{o23}} = \nabla u^{*}_{2}(x_{o2}) \cdot n_{F_{o23}} & -\frac{1}{|r_{o23}|}\big(  R^2u^{*}_{2}(x_{o3} +s(x_{o2}-x_{o3}))+ R^2u^{*}_{3}(x_{o2} +s(x_{o3}-x_{o2}) )\big) \\
\label{VF_o1_1_b}
	-\frac{1}{h} \big( u^{*}_{2}(x_{o3})-u^{*}_{3}(x_{o2})\big) = & \frac{|r_{o23}|}{h }\nabla u^{*}_2(x_{o3})\cdot n_{F_{o3}}  + 
	\frac{1}{h} R^2u^{*}_{2}(x_{o2} +s(x_{o3}-x_{o2})).
\end{align}
\end{subequations}
\subsubsection{\color{black} The modified form.}
To treat the overlapping nature of the IgA parametrizations, we use 
the bilinear forms in (\ref{VF_o1_LocalProblems}), 
the interface conditions given in  (\ref{Interf_Cond_VFo}),  
    the fact that $F_{o23}=F_{23}$ and  $\Omega_2^*=\Omega_2$, and obtain
\begin{alignat}{2}\label{VF_o2_a}
\nonumber 
a^{(2)}_{o,\rho_2}(u_2^*,\phi_h) + a^{(3)}_{o,\rho_3}(u_3^*,\phi_h) = \\
\nonumber
 	\int_{\Omega_2^*}\rho_2\nabla u_2^*\cdot \nabla \phi_h\,dx - &
   \int_{\partial \Omega_2^* \cap \partial \Omega}\rho_2\nabla u_2^*\cdot n_{\partial \Omega_2^*} \phi_h\,d\sigma \\
   \nonumber
     & - \int_{F_{o23}} \frac{1}{2}\big(\rho_2\nabla u_2^*(x_{o2})+\rho_2\nabla u_2^*(x_{o2})\big)\cdot n_{F_{o23}} \phi_h  
- \frac{\{\rho\}}{h}(u^*_{2}(x_{o2})- u^*_{2}(x_{o2}))  \phi_h  \,d\sigma \\
\nonumber
+\int_{\Omega_3^*}\rho_3\nabla u_3^*\cdot \nabla \phi_h\,dx - &
   \int_{\partial \Omega_3^*\cap\partial \Omega}\rho_3\nabla u_3^*\cdot n_{\partial \Omega_3^*} \phi_h\,d\sigma \\
   \nonumber
 - &\int_{F_{o32}} \frac{1}{2}\big({\rho_3}\nabla u_3^*(x_{o3})+
                                  {\rho_3}\nabla u_3^*(x_{o3})\big)\cdot n_{F_{o32}}\phi_h  -
                                   \frac{\{\rho\}}{h}(u_3^*(x_{o3})- u_3^*(x_{o3}))  \phi_h  \,d\sigma    \\                    
=& \int_{\Omega_2^*}f\phi_h\,dx + \int_{\Omega_3^*}f\phi_h\,dx,\quad  \text{for all}\quad  \phi_h\in V_h.
 \end{alignat}
  Using in (\ref{VF_o2_a}) the continuity conditions across the faces that are given in (\ref{VF_o1_LocalProblems}), 
  consequently employing  the Taylor expansions (\ref{VF_o1_1})  and adopting the notation
  $x_{o3}:=\mathbf{\Phi}_{o23}(x_{o2})$, $x_{o2}:=\mathbf{\Phi}_{o32}(x_{o3})$,            we get
\begin{multline}\label{VF_o2}
 	\int_{\Omega_2^*}\rho_2\nabla u_2^*\cdot \nabla \phi_h\,dx - 
   \int_{\partial \Omega_2^* \cap \partial \Omega}\rho_2\nabla u_2^*\cdot n_{\partial \Omega_2^*} \phi_h\,d\sigma 
 +\int_{\Omega_3^*}\rho_3\nabla u_3^*\cdot \nabla \phi_h\,dx - 
   \int_{\partial \Omega_3^* \cap \partial \Omega}\rho_3\nabla u_3^*\cdot n_{\partial \Omega_3^*} \phi_h\,d\sigma \\
 -  \int_{F_{o23}} \frac{1}{2}\big(\rho_2\nabla u_2^*(x_{o2})+\rho_3\nabla u_3^*(x_{o3})\big)\cdot n_{F_{o23}}  \phi_h  
 +\frac{\rho_2}{2|r_{o23}|}\big(  R^2u^*_{2}(x_{o3} +s(x_{o2}-x_{o3}))+ R^2u^*_{3}(x_{o2} +s(x_{o3}-x_{o2})) \big ) \phi_h   \\
 -\frac{\{\rho\}}{h}(u_2^*(x_{o2})-u_3^*(x_{o3}))\phi_h+
  \frac{\{\rho\}}{h}\big(|r_{o23}|\nabla u^*_{3}(x_{o2})\cdot n_{F_{o23}} +
  R^2u^*_{3}(x_{o3} +s(x_{o2}-x_{o3})) \big ) \phi_h  \,d\sigma \\
    -  \int_{F_{o32}} \frac{1}{2}\big(\rho_3\nabla u_3^*(x_{o3})+\rho_2\nabla u_2^*(x_{o2})\big)\cdot n_{F_{o32}}  \phi_h  
 +\frac{\rho_3}{2|r_{o32}|}\big(  R^2u^*_{2}(x_{o3} +s(x_{o2}-x_{o3}))+ R^2u^*_{3}(x_{o2} +s(x_{o3}-x_{o2})) \big ) \phi_h 
   \\
   -\frac{\{\rho\}}{h}(u_3^*(x_{o3})-u_2^*(x_{o2}))\phi_h+
  \frac{\{\rho\}}{h}\big(|r_{o32}|\nabla u_{2}^{*}(x_{o3})\cdot n_{F_{o32}} -
  R^2u^*_{2}(x_{o2} +s(x_{o3}-x_{o2})) \big ) \phi_h  \,d\sigma \\
= \int_{\Omega_2^*}f\phi_h\,dx + \int_{\Omega_3^*}f\phi_h\,dx,\quad  \text{for all}\quad  \phi_h\in V_h.
 \end{multline}

\subsection{{The consistency error.}}
As we pointed out above, due to the overlapping of the diffusion coefficient on $\Omega_{o23}$ the solution 
$u_3^*|_{\Omega_{o23}}$ is different from the physical solution $u|_{\Omega_{o23}}$  given by (\ref{Orig_Local_Prb_A}).
In particular, by the interface conditions (\ref{Interf_Cond_VFo}), we have
\begin{align}\label{3_4_1_a}
 \nonumber
	a^{(3)}_{o,\rho_3}(u_3^*,\phi_3)=&
 \int_{\Omega_3^*}\rho_3\nabla u_3^*\cdot \nabla \phi_3\,dx - 
   \int_{\partial \Omega_3^*\cap\partial \Omega}\rho_3\nabla u_3^*\cdot n_{\partial \Omega_3} \phi_3\,d\sigma  
 - \int_{F_{o32}} \rho_3\nabla u_3^*\cdot n_{F_{o32}} \phi_3 \,d\sigma  \\
 \nonumber
 =&\int_{\Omega_{o23}}\rho_3\nabla u_{o,3}^*\cdot \nabla \phi_3\,dx - 
 \int_{F_{o23}} {\rho_3}\nabla u_{o,3}^*\cdot n_{F_{o23}}\phi_3\,d\sigma -\int_{F_{o23}} {\rho_3}\nabla u_{3}^*\cdot (-n_{F_{o23}})\phi_3\,d\sigma \\
 \nonumber
 -& \int_{F_{o32}} {\rho_3}\nabla u_{o,3}^*\cdot n_{F_{o32}}\phi_3\,d\sigma    
 +\int_{\Omega_3}\rho_3\nabla u^*_3\cdot \nabla \phi_3\,dx -
 \int_{\partial \Omega_3^*\cap\partial \Omega}\rho\nabla u_3^*\cdot n_{\partial \Omega_3^*} \phi_3\,d\sigma \\
 =&  \int_{\Omega_3^*}f\phi_3\,dx,\quad\text{for}\quad \phi_3 \in W^{1,2}_0(\Omega_3^*).
\end{align}
The physical relevant form with $\rho=\rho_2$ on $\Omega_{o23}$ is
\begin{align}\label{3_4_1_b}
 \nonumber
	a^{(3)}_{o,\rho_2}(u,\phi_3)=&
 \int_{\Omega_{o23}}\rho_2\nabla u_{o,2}^*\cdot \nabla \phi_3\,dx - 
 \int_{F_{o23}} {\rho_2}\nabla u_{o,2}^*\cdot n_{F_{o23}}\phi_3\,d\sigma -\int_{F_{o23}} {\rho_3}\nabla u_{3}\cdot (-n_{F_{o23}})\phi_3\,d\sigma \\
 \nonumber
 -& \int_{F_{o32}} {\rho_2}\nabla u_{o,2}^*\cdot n_{F_{o32}}\phi_3\,d\sigma    
 +\int_{\Omega_3}\rho_3\nabla u_3\cdot \nabla \phi_3\,dx -
 \int_{\partial \Omega_3^*\cap\partial \Omega}\rho\nabla u_3\cdot n_{\partial \Omega_3^*} \phi_3\,d\sigma \\
 =&  \int_{\Omega_3^*}f\phi_3\,dx,\quad\text{for}\quad \phi_3 \in W^{1,2}_0(\Omega_3^*).
\end{align}
Now, by the construction of the local problems, it holds that $u_{o,2}^*=u_{2}|_{\Omega_{o23}}$, where 
$u_{2}|_{\Omega_{o23}}$ is the restriction of solution $u$ given by (\ref{Orig_Local_Prb_A}) to ${\Omega_{o23}}$, and
as well it holds $u_{3}|_{\Omega_{3}}=u_3^*|_{\Omega_{3}}$. 
Therefore, 
from (\ref{3_4_1_a}) and (\ref{3_4_1_b}), we obtain that the
difference $\rho_2\nabla u_{2}-\rho_3\nabla u^*_{o,3}$ on $\Omega_{o23}$  satisfies 
$
	\int_{\Omega_{o23}}(\rho_2\nabla u_{2}-\rho_3\nabla u^*_{o,3})\cdot \nabla \phi_3\,dx=0,
$
and taking $\phi_3=(\rho_2 u_{2}-\rho_3 u^*_{o,3})$, we obtain
 $\int_{\Omega_{o23}}\big|\rho_2\nabla u_{2}-\rho_3\nabla u^*_{o,3}\big|^2\,dx=0.$
Also, it follows by (\ref{VF_o1_LocalProblems}) that
\begin{alignat}{2}\label{3_4_2b}
\int_{F_{o32}} \frac{\{\rho\}}{h}(u_2(x_{o3})- u_{o,3}^*(x_{o3}))^2  \,d\sigma = 
\int_{F_{o23}} \frac{\{\rho\}}{h}(u_2(x_{o2})- u_{o,3}^*(x_{o2}))^2  \,d\sigma=0. 
 \end{alignat}
and thus 
\begin{alignat}{2}\label{3_4_4a}
\int_{\Omega_{o23}}\big|\rho_2\nabla u_{2}-\rho_3\nabla u^*_{o,3}\big|^2\,dx + 
\frac{\{\rho\}}{h} \Big(
\int_{F_{o32}} (u_2(x_{o3})- u_{o,3}^*(x_{o3}))^2  \,d\sigma + 
\int_{F_{o23}} (u_{o,3}^*(x_{o2})- u_2(x_{o2}))^2  \,d\sigma\Big)=0.
\end{alignat}

--------------------------------------------------------------------------------------------------
\subsection{The dG IgA   form on  general decompositions}
In the previous section, relation (\ref{7_d1}) has been derived for a decomposition consisting  of two patches $\Omega_1^*$ and $\Omega_2^*$, that are separated
by the gap region $\Omega_{g12}$. Working in the same spirit, the  form (\ref{VF_o2}) has been derived for 
a decomposition of $\Omega$ formed by two overlapping patches $\Omega_2^*$ and $\Omega_3^*$. It is 
clear that for a general decomposition $\mathcal{T}^*_{H}(\Omega):=\cup^{N}_{i=1}\Omega^{*}_i$ of $\Omega$ that includes gap and overlapping regions,
similar expressions  can be derived by defining the corresponding   Taylor expansions.
Below, we describe the proposed dG IgA scheme for a general decomposition.
\par
So far, we denoted the restrictions of the  solution of the perturbed problems on  each
$\Omega_i^*$ by $u_i^*$. 
That was useful because the solution of the perturbed problem does not coincide with the solution of (\ref{4a}).
In the following sections, we aim at avoiding lengthy formulas with complicated notations. Hence,
we will denote the solution obtained on $\mathcal{T}^*_{H}(\Omega)$ by  $u$,
 and its restriction on every patch $\Omega^*_i$ by $u_i$, independent of having gaps, overlaps or matching interfaces. 
The  corresponding  diffusion coefficient is denoted by $\rho_i$. \\
Let  $\ell\geq 2$ be  an integer and let $\mathcal{F}^*$ be the set of all the interior faces of 
$\{\partial \Omega_i^*\}_{i=1}^N$.
In association with  $\mathcal{T}^*_{H}(\Omega)$ and $\mathcal{F}^*$, we 
  introduce  the space
\begin{align}\label{3.5.1a}
	V^*:=\{v\in 
	H^\ell(\Omega^*_i),\,
	\text{for}\,i=1,\ldots,N: 
	\llbracket v \rrbracket |_{F}=0\,\text{for all}\,F\in \mathcal{F}^*\},
\end{align}
where $\llbracket v \rrbracket |_{F}$ denotes the jump of $v$ across $F$. 
{\color{black}
By the definition  of the 
patch-wise 
solutions, see e.g. (\ref{Interf_Cond_VFg}) and
(\ref{VF_o1_LocalProblems}), we can assume that $u\in V^*$. 
}
In order to proceed   with our analysis, we first define    
the {dG}-norm $\|.\|_{dG}$ associated with  $\mathcal{T}^*_H(\Omega)$. For all $v\in V_{h}^{*}:=V^*+V_h$,
\begin{align}\label{3.5.2a}
\nonumber
	\|v\|^2_{dG} = & \sum_{i=1}^N\Big(\rho_i\|\nabla v_i\|^2_{L^2(\Omega^*_i)} + 
                      \frac{\rho_i}{h}\|v_i\|^2_{L^2(\partial \Omega^*_i\cap \partial\Omega)}  
                     + \sum_{F_{gj}\subset \partial \Omega_i^*}   \frac{\{\rho\}}{h}\|v_i  \|^2_{L^2(F_{gi})}\\
                     +&  \sum_{F_{oij}\subset \partial \Omega_i^*}   \frac{\{\rho\}}{h}\|v_i  \|^2_{L^2(F_{oij})}+
                       \sum_{F_{ij}\subset \partial \Omega_i^*}   \frac{\{\rho\}}{h}\|v_i -v_j \|^2_{L^2(F_{ij})}\Big),
\end{align}
where $F_{gj}$, $F_{oij}$ and $F_{ij}$  are the interior interfaces related to gap regions, overlapping regions and matching interfaces, respectively,
see Fig.~\ref{Sub_doms_gap}(b). 
For  convenience,  we employ the following  notation for the Taylor residuals, see (\ref{7_1c}) and (\ref{VF_o1_1}), 
\vskip -0.5cm
\begin{subequations}\label{Notations_Residuals}
\begin{align}
	R^2u_{gij}=& \Big( \frac{\rho_{gij}}{2|r_{gij}|}R^2u_{j}(x_{gi}+s(x_{gj}-x_{gi}))+ \frac{\rho_{gij}}{2|r_{gij}|} R^2u_{i}(x_{gj}+s(x_{gi}-x_{gj})) \Big),\\
    R_{\nabla,gij}=&\frac{ \{\rho\}}{h}\Big(|r_{gij}|\nabla u_{j}\cdot n_{F_{gj}} + R^2u_{j}(x_{gi}+s(x_{gj}-x_{gi}))\Big),\\
	R^2u_{oij}=&\frac{\rho_i}{2|r_{oij}|}\big(  R^2 u_{o,i}(x_{oj} +s(x_{oi}-x_{oj})+ R^2u_{o,j}(x_{oi} +s(x_{oj}-x_{oi}) \big ),\\
R_{\nabla,oij}    =& \frac{\{\rho\}}{h}\big(|r_{oij}|\nabla u_{o,j}(x_{oi})\cdot n_{F_{oij}} +
                        R^2u_{o,i}(x_{oj} +s(x_{oi}-x_{oj})) \big ).
\end{align}
\end{subequations}
Furthermore, let $\mathbf{\Phi}_{gij}:F_{gi}\rightarrow F_{gj}$ and $\mathbf{\Phi}_{oij}:F_{oij}\rightarrow F_{oji}$ be the mappings of the faces of gap and overlapping regions, respectively.
Recalling (\ref{7_d1}) and (\ref{VF_o2}) and using the notations (\ref{Notations_Residuals}), 
we deduce the identity
\vskip -0.67cm
\begin{multline}\label{7_d3}
\sum_{i=1}^N\Big( \int_{\Omega_i^*}\rho_i\nabla u\cdot \nabla \phi_h\,dx - 
   \int_{\partial \Omega^*_i \cap \partial \Omega}\rho_i\nabla u\cdot n_{\partial \Omega_i} \phi_h\,d\sigma \Big)\\
 -\sum_{i=1}^N\sum_{F_{gj}\subset \partial\Omega_i^*}  \int_{F_{gj}}\Big(\frac{\rho_i}{2}\nabla u_i+\frac{\rho_j}{2}\nabla u_j(\mathbf{\Phi}_{gij})\Big)\cdot n_{F_{gj}}\phi_h  -
  \frac{\{\rho\}}{h}\big(u_i-u_j(\mathbf{\Phi}_{gij})\big)\phi_h\, d\sigma + \int_{F_{gj}}\Big(R^2u_{g,ij}+ R_{\nabla,g,ij}\Big)\phi_h\, d\sigma\\
  -\sum_{i=1}^N\sum_{F_{oij}\subset \partial\Omega_i^*}  \int_{F_{oij}}\Big(\frac{\rho_i}{2}\nabla u_i+\frac{\rho_j}{2}\nabla u_j(\mathbf{\Phi}_{oij})\Big)\cdot n_{F_{oij}}\phi_h  -
  \frac{\{\rho\}}{h}\big(u_i-u_j(\mathbf{\Phi}_{oij})\big)\phi_h\, d\sigma +\int_{F_{oij}}\Big(R^2u_{o,ij}+ R_{\nabla,o,ij}\Big)\phi_h\, d\sigma\\
  \hspace*{-1cm}
  -\sum_{i=1}^N\sum_{F_{ij}\subset \partial\Omega_i^*}  \int_{F_{ij}}\Big(\frac{\rho_i}{2}\nabla u_i+\frac{\rho_j}{2}\nabla u_j\Big)\cdot n_{F_{ij}}\phi_h  -
  \frac{\{\rho\}}{h}\big(u_i-u_j\big)\phi_h\, d\sigma 
 =  \sum_{i=1}^N\int_{\Omega_i^*}f\phi_h\,dx, {\ }\phi_h \in V_h,
\end{multline}
that holds for the  solution $u$,
where the integrals over the faces in (\ref{7_d3}) are defined as in (\ref{7_d1}) and 
(\ref{VF_o2}).  We observe that the terms appearing in (\ref{7_d3})  are the terms that are expected to 
 appear  in a  dG scheme, of course, excluding the Taylor remainder terms. 
In view of this, we    define the  forms
$ B_{\Omega_i^*}(\cdot,\cdot):V_h^*\times V_h \rightarrow \mathbb{R}$,
$R^2_{g}(\cdot,\cdot):V_h^*\times V_h  \rightarrow \mathbb{R}$, 
$R^2_{o}(\cdot,\cdot):V_h^*\times V_h  \rightarrow \mathbb{R}$, 
$R_{\nabla,g}(\cdot,\cdot):V_h^*\times V_h  \rightarrow \mathbb{R}$,
$R_{\nabla,o}(\cdot,\cdot):V_h^*\times V_h  \rightarrow \mathbb{R}$, 
$R_{\Omega_o,\Omega_g}(\cdot,\cdot):V_h^*\times V_h  \rightarrow \mathbb{R}$,
and the linear functional
$l_{f, \Omega_i^*}:V_h \rightarrow \mathbb{R}$ by
\vskip -0.4cm
 \begin{subequations}\label{7_d3_b}
\begin{align}\label{Var_Form_Gap_Overl}
B_{\Omega_i^*}(u,\phi_h) =& \sum_{i=1}^N\Big(\int_{\Omega^*_i}\rho_i\nabla u_i\cdot \nabla \phi_h\,dx - 
   \int_{\partial \Omega^*_i \cap \partial \Omega}\rho_i\nabla u\cdot n_{\partial \Omega^*_i} \phi_h\,d\sigma \\
   \nonumber
 &  -\sum_{F_{gj}\subset \partial\Omega_i^*}  \int_{F_{gj}}\Big(\frac{\rho_i}{2}\nabla u_i+\frac{\rho_j}{2}\nabla u_j\Big)\cdot n_{F_{gj}}\phi_h  -
  \frac{\eta\{\rho\}}{h}\big(u_i-u_j\big)\phi_h\, d\sigma \\
  \nonumber
  &-\sum_{F_{oij}\subset \partial\Omega_i^*}  \int_{F_{oij}}\Big(\frac{\rho_i}{2}\nabla u_i+\frac{\rho_j}{2}\nabla u_j\Big)\cdot n_{F_{oij}}\phi_h  -
  \frac{\eta\{\rho\}}{h}\big(u_i-u_j\big)\phi_h\, d\sigma \\
   \nonumber
   &-\sum_{F_{ij}\subset \partial\Omega_i^*}  \int_{F_{ij}}\Big(\frac{\rho_i}{2}\nabla u_i+\frac{\rho_j}{2}\nabla u_j\Big)\cdot n_{F_{ij}}\phi_h  -
  \frac{\eta\{\rho\}}{h}\big(u_i-u_j\big)\phi_h\, d\sigma\Big),
  \end{align}
 \vskip -0.2cm
  \begin{alignat}{2}\label{Var_Form_Residuals}
 \nonumber
 R_{g}(u,\phi_h)=        & \sum_{i=1}^N\sum_{F_{gj}\subset \partial\Omega_i^*} R^2u_{g,ij},              & 
  R_{\nabla,g}(u,\phi_h)=&\sum_{i=1}^N\sum_{F_{gj}\subset \partial\Omega_i^*} \eta  R_{\nabla,g,ij},\\
	R_{o}(u,\phi_h)=&\sum_{i=1}^N\sum_{F_{oij}\subset \partial\Omega_i^*} R^2u_{o,ij},               &
R_{\nabla,o}(u,\phi_h) =& \sum_{i=1}^N\sum_{F_{oij}\subset \partial\Omega_i^*} \eta R_{\nabla,o,ij},\\ 
\nonumber
R_{\Omega_o,\Omega_g}(u,\phi_h) = & R_{g}(u,\phi_h) +  R_{\nabla,g}(u,\phi_h)+ & R_{o}(u,\phi_h)+  R_{\nabla,o}(u,\phi_h), &\\ 
\nonumber
 {\ }\hskip 2cm
 l_{f,\Omega_i^*}(\phi_h) = & \sum_{i=1}^N\int_{\Omega_i^*}f\phi_h\,dx,
\end{alignat}
\end{subequations}
where $\eta >0$ is a parameter that 
{
will be defined by the coercivity of the resulting dG bilinear form 
on the IgA spaces $V_h$, see proof of Lemma~\ref{lemma_0}.
}
We note that  the forms $Ru_{o}$ and $R_{\nabla,o}$  in (\ref{Var_Form_Residuals}) are related to overlapping patches
and  were not been introduced
in \cite{HoferLangerToulopoulos_2015a,HoferToulopoulos_IGA_Gaps_2015a} 
for the derivation of the dG IgA scheme in the case of   gap regions. 
For the case under consideration, the introduction  of $ R_{\nabla,g}$ and $R_{\nabla,o}$ simplifies the analysis of the method.  In order to establish a practically usable dG IgA variational problem,
we must get rid of the  terms related to Taylor residuals in the  dG IgA 
bilinear form. Also, we  prefer the weak  enforcement of the  boundary conditions. 
Hence, we introduce the bilinear form 
$B_h(\cdot,\cdot):V_h\times V_h \rightarrow \mathbb{R}$
 and the linear form $F_h: V_h \rightarrow \mathbb{R}$
as follows
\begin{equation}\label{7_d5}
	B_h(u_h,\phi_h) = B_{\Omega_i^*}(u_h,\phi_h) +
	\sum_{i=1}^N\frac{\eta\rho_i}{h}\int_{\partial\Omega_i^* \cap \partial \Omega}u_h \phi_h\,d\sigma,
\end{equation}
\begin{equation}\label{7_d6}
	F_h(\phi_h) = l_{f,\Omega_i^*}(\phi_h)+
	\sum_{i=1}^N\frac{\eta\rho_i}{h}\int_{\partial\Omega_i^*\cap \partial \Omega}u_D \phi_h\,d\sigma.
\end{equation} 
Finally, our dG IgA scheme reads as follows: find $u_h\in V_h $ such that
\begin{equation}\label{7_d4}
	B_h(u_h,\phi_h) = F_h(\phi_h), \quad \text{for all} {\ }\phi_h\in V_h.
\end{equation}
{
We immediately conclude that the variational identity
}
\begin{align}\label{7_d3_a}
 B(u,\phi_h):= B_{h}(u,\phi_h) + R_{\Omega_o,\Omega_g}(u,\phi_h)=
  F_h(\phi_h),{\ }\forall \phi_h \in V_{h},
\end{align}
holds for the solution $ u \in V^* $.
Next we show several results that we are going to  use in our error analysis. 
For convenience, we introduce the following notations:
  \begin{subequations}\label{Notations_gap_Overl}
  \begin{align}
  \mathcal{K}_{\partial \Omega_{g},\partial \Omega_{o}}=& \sum_{\Omega_{gij}}
    \|\nabla u_{gij}\|_{L^2(\partial \Omega_{gij})} +
    \sum_{\Omega_{oij}}
    \|\nabla u_{oij}\|_{L^2(\partial \Omega_{oij})},\\
   \mathcal{K}_{\Omega_{g},\Omega_{o}}=& \sum_{\Omega_{gij}}
    \|\sum_{|\alpha|=2}|D^\alpha u_{gij}|\|_{L^2(\Omega_{gij})} +
    \sum_{\Omega_{oij}}
    \|\sum_{|\alpha|=2}|D^\alpha u_{oij}|\|_{L^2(\Omega_{oij})}\\
     \mathcal{K}_{g,o}=&\mathcal{K}_{\partial \Omega_{g},\partial \Omega_{o}} +\mathcal{K}_{\Omega_{g},\Omega_{o}}.
   \end{align}  
   \end{subequations} 
\begin{lemma}\label{lemma4}
Under the Assumption  \ref{Assumption2} and 
the setting (\ref{5b_1}),  there exist  positive constants 
$C_1(\rho, d)$ and $C_2(\rho,d)$ such 
that the estimates 
\begin{subequations}\label{7_i0}
 \begin{flalign}\label{7_i0_1}
  | R_{\nabla,g}(u,\phi_h)+ R_{\nabla,o}(u,\phi_h)|\leq & C_1 
  \|\phi_h\|_{dG} \Big(   \mathcal{K}_{\partial \Omega_{g},\partial \Omega_{o}} h^{\beta} +
     \mathcal{K}_{\Omega_{g},\Omega_{o}}\,h^{\gamma}\Big), \\
   \label{7_i0_2}
   |R_{g}(u,\phi_h)+R_{o}(u,\phi_h)| \leq & C_2 \|\phi_h\|_{dG}\,  \mathcal{K}_{\Omega_{g},\Omega_{o}}
   h^{\gamma},
   \end{flalign}
   \end{subequations} 
   holds 
   for all $u \in V^*$ and $\phi_h \in V_h$.
  where we have previously used the notations (\ref{Notations_Residuals})  and the setting 
  $\beta=\lambda- 0.5$
  and
  $\gamma=\lambda+1$.
%
 \end{lemma}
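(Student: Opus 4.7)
The strategy is to estimate each summand in (\ref{Notations_Residuals}) on a single face and then sum over all gap and overlap interfaces, using two main ingredients: the pointwise Taylor bounds built into (\ref{7_b_2}), and the trace-type control
\[\|\phi_h\|_{L^2(F)} \leq C\,(h/\{\rho\})^{1/2}\,\|\phi_h\|_{dG}\]
that is inherent in the dG norm (\ref{3.5.2a}), together with the geometric hypothesis $d_g, d_o \leq h^\lambda$.

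I would first handle the ``linear'' part $\frac{\{\rho\}}{h}|r|\nabla u\cdot n$ of $R_{\nabla,g,ij}$ and $R_{\nabla,o,ij}$. A Cauchy--Schwarz on the face $F$, the pointwise bound $|r| \leq d_g \leq h^\lambda$ (respectively $|r| \leq d_o \leq h^\lambda$), and the trace estimate above give
\[\Big|\frac{\{\rho\}}{h}\int_F |r|\,\nabla u\cdot n\,\phi_h\,d\sigma\Big| \leq C\,h^{\lambda - 1/2}\,\|\nabla u\|_{L^2(F)}\,\|\phi_h\|_{dG},\]
and after summation over all gap and overlap faces this yields precisely the $\mathcal{K}_{\partial\Omega_g,\partial\Omega_o}\,h^\beta$ contribution in (\ref{7_i0_1}).

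The core step is an $L^2(F)$-bound for the Taylor remainders themselves, which governs both the second halves of $R_{\nabla,g}, R_{\nabla,o}$ and the forms $R_g, R_o$. From (\ref{7_b_2}) one has the pointwise inequality $|R^2 u(y+s(x-y))|\leq C|y-x|^2\int_0^1 |D^2 u(y+s(x-y))|\,ds$. Squaring, integrating over $F$, applying Cauchy--Schwarz in $s$, and then performing the change of variables $(x,s)\in F\times[0,1]\mapsto z = y(x)+s(x-y(x))$ into the gap or overlap region --- whose Jacobian equals $|r(x)|$ by the simple-face geometry of Sections~\ref{Gap_region} and~\ref{overlapping_region} --- converts the double integral into an $L^2$ integral of $|D^2 u|$ over that region. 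The $1/|r|$ prefactor that multiplies the remainders in (\ref{Notations_Residuals}) cancels with the $|r|$ from the Jacobian, and the remaining powers of $|r|\leq h^\lambda$, together with the trace control of $\phi_h$ and the outer factor $1/h$ present in $R_{\nabla,\cdot}$, produce the claimed $h^\gamma=h^{\lambda+1}$ behaviour needed in both (\ref{7_i0_1}) and (\ref{7_i0_2}).

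The main obstacle is the careful bookkeeping of the $|r|$ factors through this change of variables, since $|r|$ degenerates at the edges of the gap/overlap region. This is handled by leaving the $1/|r|$ of the Jacobian inside the integrand and matching it against the $|r|^2$ that arises from the monomial $(y-x)^\alpha$ with $|\alpha|=2$, so that no uncompensated negative powers of $|r|$ remain. Once these per-face estimates are in place, (\ref{7_i0_1}) and (\ref{7_i0_2}) follow by summing over all gap and overlap interfaces and invoking the definitions of $\mathcal{K}_{\partial\Omega_g,\partial\Omega_o}$ and $\mathcal{K}_{\Omega_g,\Omega_o}$ in (\ref{Notations_gap_Overl}); the overlap case is treated in full analogy with the gap case, with Proposition~\ref{Prop1_1} absorbing the small discrepancy between $n_{F_{o23}}$ and $-n_{F_{o32}}$.
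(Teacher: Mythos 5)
The paper does not actually prove this lemma: its ``proof'' is a two-line citation to \cite{HoferLangerToulopoulos_2015a,HoferToulopoulos_IGA_Gaps_2015a}, stating that the gap-region argument given there carries over verbatim to overlaps. Your sketch therefore supplies more than the paper does, and it follows exactly the route the paper's machinery is set up for: Cauchy--Schwarz on each face, the trace control $\|\phi_h\|_{L^2(F)}\leq (h/\{\rho\})^{1/2}\|\phi_h\|_{dG}$ read off from the face terms in (\ref{3.5.2a}), the integral-form Taylor remainder (\ref{7_b_2}), the fibration of the gap/overlap region by the segments $[x,\mathbf{\Phi}(x)]$ coming from the simple-face geometry, and Proposition \ref{Prop1_1} to absorb the $n_{F_{o23}}$ versus $-n_{F_{o32}}$ discrepancy. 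Your treatment of the linear part $\frac{\{\rho\}}{h}|r|\nabla u\cdot n$ is complete and correctly produces the $\mathcal{K}_{\partial\Omega_g,\partial\Omega_o}\,h^{\lambda-1/2}$ contribution in (\ref{7_i0_1}).

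The one genuine soft spot is the power counting for the $h^{\gamma}$ terms, which you assert rather than carry out. If one executes the steps you describe --- bound $|R^2u|\leq C|r|^2\int_0^1|D^2u|\,ds$, apply Cauchy--Schwarz in $s$, change variables with Jacobian $|r|$ to reach $\|{\textstyle\sum_{|\alpha|=2}}|D^\alpha u|\|_{L^2(\Omega_{gij})}$ as in (\ref{Notations_gap_Overl}), and use the trace factor $(h/\{\rho\})^{1/2}$ for $\phi_h$ --- the exponents that come out for the pure remainder forms $R_g,R_o$ (prefactor $1/|r|$) are of the type $h^{(\lambda+1)/2}$ or $h^{\lambda+1/2}$ rather than the stated $h^{\lambda+1}$; the ``cancellation of $1/|r|$ against the Jacobian'' you invoke does not by itself close this. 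You should either display the arithmetic showing how $h^{\lambda+1}$ is reached (possibly by exploiting that $|\Omega_{gij}|\sim d_g$ so that $\mathcal{K}_{\Omega_g,\Omega_o}$ itself carries a factor $d_g^{1/2}$, or by a sharper treatment of the $|r|$-weighted line integrals), or verify the exponent against the cited precursor proofs. Note that since $\gamma=\lambda+1>\beta=\lambda-1/2$, the $h^{\gamma}$ terms are always dominated in (\ref{7_i0_1}) and in Theorem \ref{Theorem_1_estimates}, so the discrepancy would not affect the final convergence rate $r=\min(s,\beta)$; but as a proof of the lemma \emph{as stated} this is the step that must be made precise.
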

\begin{proof}
In  \cite{HoferLangerToulopoulos_2015a,HoferToulopoulos_IGA_Gaps_2015a}, we proved the  bounds (\ref{7_i0}) for the case of 
gap regions. The same arguments can be followed for showing the same bounds for the case of overlapping regions. 
Essentially,  bounds (\ref{7_i0})  are  generalizations of these two aforementioned cases, and thus the details of the proofs are omitted. 
$\BLACKBOX$
\end{proof}  
\begin{lemma}\label{lemma00} Let  $\beta=\lambda-\frac{1}{2}$ and $\gamma=\lambda+1$.
Then there is a constant 
{ $C=C(\eta,\rho) \geq 0$} 
independent of $h$ such that the estimate 
\begin{align}\label{7_d7a}
B_h(u,\phi_h)\leq & {\color{black} C(\eta,\rho)} \Big( \big(\|u\|^2_{dG} + \sum_{i=1}^N
  \rho_i h\| \nabla u_i\|^2_{L^2(\partial \Omega^*_{i})}\big)^{\frac{1}{2}} + 
   \mathcal{K}_{g,o} (h^{\gamma}+h^{\beta})\Big)
   \|\phi_h\|_{dG},
\end{align}
 holds  for all 
 $u \in V_h^*$ and $\phi_h \in V_h$,
 where $\mathcal{K}_{g,o}$ is defined as in (\ref{Notations_Residuals}). 
\end{lemma}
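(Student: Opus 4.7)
The plan is to derive the bound by decomposing $B_h(u,\phi_h)$ into its constituent integrals (volume, weak boundary, matching interface, gap interface, overlap interface) as spelled out in (\ref{7_d5})--(\ref{Var_Form_Gap_Overl}), and then apply Cauchy--Schwarz to each piece, pairing the two factors with the appropriate contributions of the dG norm of $\phi_h$ and of the extended norm of $u$ appearing on the right-hand side. For the bulk term $\int_{\Omega_i^*}\rho_i\nabla u_i\cdot\nabla\phi_h\,dx$ this is immediate, producing the gradient parts of $\|u\|_{dG}\|\phi_h\|_{dG}$.

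Next I would handle the boundary flux term on $\partial\Omega$ and the weakly enforced Dirichlet term of (\ref{7_d5}) by the standard trick of splitting $\rho_i=\sqrt{\rho_i h}\sqrt{\rho_i/h}$. This gives $\sqrt{\rho_i h}\,\|\nabla u_i\|_{L^2(\partial\Omega_i^*\cap\partial\Omega)}\cdot\sqrt{\rho_i/h}\,\|\phi_h\|_{L^2(\partial\Omega_i^*\cap\partial\Omega)}$ and $\sqrt{\eta\rho_i/h}\|u_i\|\cdot\sqrt{\eta\rho_i/h}\|\phi_h\|$, where the first factor feeds directly into $\sum_i\rho_i h\|\nabla u_i\|^2_{L^2(\partial\Omega_i^*)}$ and the second into $\|\phi_h\|_{dG}$ (the constant $\eta$ is absorbed into $C(\eta,\rho)$). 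The matching-interface integrals on $F_{ij}$ are treated identically: the consistency flux terms again split via $\rho=\sqrt{\rho h}\sqrt{\rho/h}$, and the penalty term $\tfrac{\eta\{\rho\}}{h}(u_i-u_j)\phi_h$ pairs naturally with the $\tfrac{\{\rho\}}{h}\|v_i-v_j\|^2_{L^2(F_{ij})}$ piece of the dG norm.

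The delicate work is on the gap and overlap faces, since on $F_{gj}\subset\partial\Omega_i^*$ the symbols $u_j$ and $\nabla u_j$ really stand for $u_j\circ\mathbf{\Phi}_{gij}$ and $\nabla u_j\circ\mathbf{\Phi}_{gij}$ evaluated on the opposite face $F_{gi}\subset\partial\Omega_j^*$. The strategy is to pull these integrals back to $F_{gi}$ (respectively $F_{oji}$) using the change-of-variables identity (\ref{U_o3U_o2_a4}) and the normal-flux equivalence (\ref{U_o3U_o2_a6}); after pull-back we are comparing $\nabla u_j$ with the normal $n_{F_{gi}}$, so the Cauchy--Schwarz factor $\sqrt{\rho_j h}\|\nabla u_j\|_{L^2(F_{gi})}$ feeds into the extended norm $\sum_i\rho_i h\|\nabla u_i\|^2_{L^2(\partial\Omega_i^*)}$ while $\sqrt{\rho_j/h}\|\phi_h\|_{L^2(F_{gj})}$ is part of $\|\phi_h\|_{dG}$. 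The penalty jump $\tfrac{\eta\{\rho\}}{h}(u_i-u_j(\mathbf{\Phi}))\phi_h$ is split by the triangle inequality $|u_i-u_j(\mathbf{\Phi})|\le|u_i|+|u_j(\mathbf{\Phi})|$; the first piece pairs with $\tfrac{\{\rho\}}{h}\|v_i\|^2_{L^2(F_{gi})}$ in $\|u\|_{dG}$, and the second is transferred to $F_{gi}\subset\partial\Omega_j^*$ by (\ref{U_o3U_o2_a4}) and then bounded by the analogous gap contribution to $\|u\|_{dG}$ coming from patch $j$.

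Finally, when executing these pull-backs the precise direction of the normal $n_{F_{gj}}$ relative to $-n_{F_{gi}}$, together with the discrepancy $x_{gi}$ vs.\ $\mathbf{\Phi}_{gij}(x_{gi})$, introduces precisely the Taylor-type correction terms of the form $R_{\nabla,g,ij}$, $R^2u_{g,ij}$ (and their overlap analogues) that were introduced in (\ref{Notations_Residuals}); bounding them through Lemma~\ref{lemma4} and the hypotheses $d_g,d_o\le h^\lambda$ of (\ref{5b_1})--(\ref{5b_1_o}) accounts for the extra $C\,\mathcal{K}_{g,o}(h^\gamma+h^\beta)\|\phi_h\|_{dG}$ in the statement. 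The main obstacle is the careful bookkeeping of Jacobian factors, normal orientations and the splitting of $\rho$, but no new analytic ingredient beyond (\ref{U_o3U_o2_a4})--(\ref{U_o3U_o2_a6}), the triangle and Cauchy--Schwarz inequalities, and Lemma~\ref{lemma4} is needed.
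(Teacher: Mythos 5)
Your proof is correct and follows essentially the same route as the paper's: term-by-term Cauchy--Schwarz with the $\sqrt{\rho h}\cdot\sqrt{\rho/h}$ splitting for the flux terms, pull-back via $\mathbf{\Phi}_{gij}$, $\mathbf{\Phi}_{oij}$ and the Jacobian equivalence (\ref{U_o3U_o2_a4})--(\ref{U_o3U_o2_a6}) on the non-matching faces, and Lemma~\ref{lemma4} to absorb the Taylor remainders into $\mathcal{K}_{g,o}(h^{\gamma}+h^{\beta})\|\phi_h\|_{dG}$. The only (harmless) deviation is the penalty term on the gap/overlap faces, which the paper rewrites via the Taylor identity (\ref{7_1c_b}) and bounds by (\ref{7_i0_1}) --- this is where its $h^{\beta}$ contribution originates --- whereas you bound it by the triangle inequality against the single-trace contributions $\frac{\{\rho\}}{h}\|v_i\|^2_{L^2(F_{gi})}$ of the dG norm; both bounds are dominated by the stated right-hand side.
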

\begin{proof}
Applying
inequality (\ref{HolderYoung}), we can show that
\begin{align}\label{7_d7a_00}
\big|\sum_{i=1}^N \int_{\Omega_i^*}\rho^{\frac{1}{2}}\nabla u\cdot \rho^{\frac{1}{2}}\nabla \phi_h\,dx \big| \leq
\|u\|_{dG}\|\phi_h\|_{dG}.
\end{align}
Next, we  give   bounds for the whole flux terms.  Let $F_{ij}$ be the matching interfaces on
$\mathcal{T}_H^*(\Omega)$.  
A direct application of   lemma 5.2 in \cite{LT:LangerToulopoulos:2014a} gives
\begin{multline}\label{7_d7a_0}
\sum_{i=1}^N\sum_{F_{ij}\subset \partial\Omega_i^*}
\Big|  \int_{F_{ij}}\Big(\frac{\rho_i}{2}\nabla u_i+\frac{\rho_j}{2}\nabla u_j\Big)\cdot n_{F_{ij}}\phi_h  -
  \frac{\eta\{\rho\}}{h}\big(u_i-u_j\big)\phi_h\, d\sigma \Big|\\
   +\Big|\int_{\partial \Omega_i^* \cap \partial \Omega}\rho_i\nabla u_i\cdot n_{\partial \Omega_i^*} \phi_h\,d\sigma \Big| 
   \leq  C(\eta,\rho)\Big( \sum_{i=1}^N \rho_i\,  h\| \nabla u_i\|^2_{L^2(\partial \Omega_i^*)}\Big)^{\frac{1}{2}} \|\phi_h\|_{dG}.
\end{multline}
%
For the  flux terms on the gap faces $F_{gi}\subset \partial \Omega_i^*$, we proceed as follows: The mappings
$\mathbf{\Phi}_{gij}:F_{gi}\rightarrow F_{gj}$, the triangle   inequality, conditions 
 (\ref{Interf_Cond_VFg}) and  (\ref{7_1c_a}) yield
 \begin{multline}\label{7_d7b}
 \sum_{i=1}^N\sum_{F_{gi}\subset \partial\Omega_i^*}\Big|\int_{F_{gi}}\Big(\frac{\rho_i}{2}\nabla u_i+
 \frac{\rho_j}{2}\nabla u_j(\mathbf{\Phi}_{gij}) \Big)\cdot n_{F_{gi}}\phi_h\,d\sigma \Big| \\
\leq  \sum_{i=1}^N\sum_{F_{gi}\subset \partial\Omega_i^*}
   \Big|\int_{F_{gi}}  \rho_i \nabla u_i\cdot n_{F_{gi}}
   \phi_h \,d\sigma \Big| +
   \Big|\int_{F_{gi}}  \rho_j \nabla u_j(\mathbf{\Phi}_{gij})\cdot n_{F_{gi}} \phi_h \,d\sigma \Big| \\
   \leq \sum_{i=1}^N\sum_{F_{gi}\subset \partial\Omega_i^*} \Big|\int_{F_{gi}}  \rho_i \nabla u_i\cdot n_{F_{gi}}
   \phi_h \,d\sigma \Big| +
   \Big|\int_{F_{gi}}  \Big(\rho_{gij} \nabla u_{gij}\cdot n_{F_{gi}} +  R^2 u_{g,ij}\Big)  \phi_h \,d\sigma \Big| \\
   \leq \sum_{i=1}^N\sum_{F_{gi}\subset \partial\Omega_i^*} 2   C(\rho)\Big|\int_{F_{gi}}   \nabla u_i\cdot n_{F_{gi}}
   \phi_h \,d\sigma \Big| +\Big|\int_{F_{gi}}  R^2 u_{g,ij}\phi_h \,d\sigma  \Big| \\
   \leq \sum_{i=1}^N
   2C(\rho)\Big(   \rho_i\, h\| \nabla u_i\|^2_{L^2(\partial \Omega_i)}\Big)^{\frac{1}{2}} \|\phi_h\|_{dG}+
   \|\phi_h\|_{dG}  \mathcal{K}_{\Omega_g,\Omega_o}   h^{\gamma},
 \end{multline}
 where  estimate  (\ref{7_i0_2})   has been used.  
 Also, we need to bound the jump terms   in $B_h(\cdot,\cdot)$.
 Following the same arguments  as in (\ref{7_d7b}),  using (\ref{7_1c_b}) and estimate (\ref{7_i0_1}),  we can show
 \begin{align}\label{7_d7e}
 \nonumber
\sum_{i=1}^N\sum_{F_{gi}\subset \partial\Omega_i^*}\Big|\int_{F_{gi}}  \frac{\{\rho\}}{h}\big(u_i-u_j(\mathbf{\Phi}_{gij})\big)\phi_h \,d\sigma \Big| 
      \leq C \sum_{i=1}^N\sum_{F_{gi}\subset \partial\Omega_i^*} \Big| 
    \int_{F_{gi}}  R_{\nabla,g,ij}\phi_h \,d\sigma \Big| \\
    \leq C(\eta,\rho) \Big(  \mathcal{K}_{\partial \Omega_{gij},\partial \Omega_{oij}} h^{\beta} +
     \mathcal{K}_{\Omega_{g},\Omega_{o}}\,h^{\gamma}\Big) \|\phi_h\|_{dG}.
 \end{align}
%
  In a similar, way we can   verify
 \begin{multline}\label{7_d7f}
\sum_{i=1}^N\sum_{F_{oij}\subset \partial\Omega_i^*}\Big |  \int_{F_{oij}}\Big(\frac{\rho_i}{2}\nabla u_i+\frac{\rho_j}{2}\nabla u_j\Big)\cdot n_{F_{oij}}\phi_h  -
  \frac{\eta\{\rho\}}{h}\big(u_i-u_j\big)\phi_h\, d\sigma \Big| \\
  \leq C(\eta,\rho) \Big(\sum_{i=1}^N
   \big(\rho_i\,h\| \nabla u_i\|^2_{L^2(\partial \Omega^*_{i})}\big)^{\frac{1}{2}} + 
   \mathcal{K}_{g,o} (h^{\gamma}+h^{\beta})\Big)
   \|\phi_h\|_{dG},
 \end{multline}
 Finally, collecting all the above   estimates,
 we can deduce assertion (\ref{7_d7a}). 
$\BLACKBOX$  
\end{proof}

We point out that the terms $\mathcal{K}_{g,o} (h^{\gamma}+h^{\beta})$ in (\ref{7_d7a}) appear due to the 
estimation of the multi-directional Taylor  remainder terms, 
which are involved in the approximation of the   normal fluxes on 
$\partial \Omega_i^*$. 
In \cite{HoferLangerToulopoulos_2015a}, a similar bound has been shown 
for the case of 
uni-directional
Taylor expansions, working in a different direction. 
In fact, estimate (\ref{7_d7a}) generalizes  this estimate for the case of general gap and overlapping regions.

\begin{proposition}\label{Propos_1}
{\color{black}	
        Let $\Xi$ be a knot vector that forms a partition of $I=[0,1]$,
	and let
	$\mathbb{B}_{{\Xi}_i,p}$ be the corresponding B-spline space of fixed degree $p$, see 
	(\ref{0.0d2}).  
	Then, for a given $\phi_1 \in \mathbb{B}_{{\Xi}_i,p}$ 
	with $\phi_1(x)> 0$  a.e. on $I$,
	there 	exists a positive constant $c$ such that
	 inequality 
	\begin{align}\label{3_5_16_aa}
	\int_{0}^1 \big(1-\frac{1}{\sqrt{1+\phi_1(x)}}\big)(\phi_2(x))^2\,dx 
	\geq
	c \, \int_{0}^1 (\phi_2(x)^2)\,dx,
	\end{align}
	holds  	for all $\phi_2 \in \mathbb{B}_{{\Xi}_i,p}$.
}
\end{proposition}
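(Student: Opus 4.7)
The plan is to reduce the inequality to an equivalence of two norms on the finite-dimensional space $\mathbb{B}_{\Xi_i,p}$. Set
\begin{equation*}
w(x) := 1-\frac{1}{\sqrt{1+\phi_1(x)}}, \qquad x\in I=[0,1].
\end{equation*}
Since $\phi_1(x)>0$ almost everywhere on $I$ by assumption, the map $t\mapsto 1-(1+t)^{-1/2}$ is strictly increasing and vanishes only at $t=0$, so $w(x)>0$ for almost every $x\in I$, and $w\in L^\infty(I)$ because $\phi_1$ is a piecewise polynomial on a compact interval and is nonnegative, giving $0\le w<1$.

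Next, I would interpret both sides as squared norms on $V:=\mathbb{B}_{\Xi_i,p}$: the weighted functional
\begin{equation*}
\|\phi_2\|_w^2 := \int_0^1 w(x)\,\phi_2(x)^2\,dx
\end{equation*}
and the ambient $\|\phi_2\|^2_{L^2(I)}$. Clearly $\|\cdot\|_w\le \|\cdot\|_{L^2(I)}$ since $w\le 1$, and $\|\cdot\|_w$ is a seminorm. The key step is to verify that $\|\cdot\|_w$ is in fact a norm on $V$: if $\|\phi_2\|_w=0$, then $w\,\phi_2^2=0$ a.e.\ on $I$; because $w>0$ a.e., this forces $\phi_2=0$ a.e.\ on $I$; and since elements of $V$ are piecewise polynomials (in particular continuous on each mesh element of $\Xi_i$), vanishing almost everywhere implies $\phi_2\equiv 0$.

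The conclusion then follows from the equivalence of norms on the finite-dimensional space $V$: there exists $c=c(\phi_1,\Xi_i,p)>0$ such that $\|\phi_2\|_w^2\ge c\,\|\phi_2\|_{L^2(I)}^2$ for every $\phi_2\in V$. Equivalently, the infimum of $\|\phi_2\|_w^2$ over the unit $L^2$-sphere in $V$ is attained (by compactness of the unit sphere in a finite-dimensional space) and is strictly positive by the previous step, yielding the constant $c$. The only subtlety is the positivity-almost-everywhere of $w$, which ultimately comes from the hypothesis $\phi_1>0$ a.e.\ and the strict monotonicity of $t\mapsto 1-(1+t)^{-1/2}$; once that is in place, the proof is a routine norm-equivalence argument and produces no explicit estimate of $c$ in terms of $h$ or $p$, which is consistent with the statement of the proposition.
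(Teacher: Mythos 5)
Your proof is correct. You reduce the claim to the equivalence of two norms on the finite-dimensional space $V=\mathbb{B}_{\Xi_i,p}$: you check that $w(x)=1-(1+\phi_1(x))^{-1/2}$ is nonnegative, bounded, and positive a.e.\ (using the strict monotonicity of $t\mapsto 1-(1+t)^{-1/2}$ and the hypothesis $\phi_1>0$ a.e.), that $\|\phi_2\|_w^2=\int_0^1 w\,\phi_2^2$ therefore defines a genuine norm on $V$ (a piecewise polynomial vanishing a.e.\ vanishes identically), and then invoke compactness of the unit $L^2$-sphere of $V$ to get a strictly positive infimum, i.e.\ the constant $c=c(\phi_1,\Xi_i,p)$. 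This is a genuinely different route from the paper's: there, the authors argue by hand, isolating a neighbourhood $(x_0-\delta,x_0+\delta)$ of a zero $x_0$ of $\phi_1$, bounding $\phi_2^2$ on it by its maximum $M_2$, and compensating the loss near $x_0$ by the contribution of $w\,\phi_2^2$ on an interval around a point where $\phi_2^2$ is large; they carry this out explicitly only for the case of a single interior zero of $\phi_1$ and state that the other cases are analogous. Your compactness argument handles all configurations of the (measure-zero) zero set of $\phi_1$ at once and avoids the delicate pointwise comparisons between $M_2$ and the values of $\phi_2$ away from $x_0$ that the paper's splitting relies on; the price is that it is entirely non-constructive, whereas the paper's computation at least gestures at how $c$ degrades as $\phi_1$ flattens near its zeros. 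Both proofs, crucially, use that $\phi_2$ ranges over a fixed finite-dimensional spline space — the inequality is false on all of $L^2(I)$ — and your write-up makes this dependence explicit, which is a point the paper leaves implicit.
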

{\color{black}
\begin{proof} 
If $\phi_1(x) >0$ everywhere in $I$, then (\ref{3_5_16_aa}) can easily be   shown.
Next, we give the proof for the case of
existing only one interior point $x_0$ such that $\phi_1(x_0)=0$. 
The proof can be generalized to other cases. 
	Let $x_0$ be an interior point of $I$  such that $\phi_1(x_0)=0$. 
	Then, for given, sufficiently small $\epsilon >0$, 	
	there exist a $0 < \delta < 1/2$ such that
	 $\phi_1(x)\leq \epsilon$ for all 
	 $x\in (x_0-\delta,x_0+\delta) {\color{black} \subset (0,1)} $ 
	 {\color{black} and $\phi_1(x)\geq \epsilon$ outside this interval.}
	 We now introduce the notation $\Delta^-=x_0-\delta$ and $\Delta^+=x_0+\delta$. 
		By the continuity of $\phi_2^2 := (\phi_2)^2$ in $[\Delta^-,\Delta^+]$, 
	there exist 
	{\color{black} non-negative constants}
	{ $m_{2}$ and $M_2$} such that
	$m_2 \leq \phi_2^2(x)\leq M_2$ for all $x\in [\Delta^-,\Delta^+]$. 
	Finally, we arrive at the estimates
	\begin{multline}\label{3_5_16_ab}
		\int_{0}^1 \big(1-\frac{1}{\sqrt{1+\phi_1(x)}}\big)\phi_2^2(x)\,dx \geq \\
		\int_{0}^{\Delta^-} \big(1-\frac{1}{\sqrt{1+\phi_1(x)}}\big)\phi_2^2(x)\,dx +
		\int_{\Delta^-}^{\Delta^+} \big(1-\frac{1}{\sqrt{1+\phi_1(x)}}\big)\phi_2^2(x)\,dx +
		\int_{\Delta^+}^1 \big(1-\frac{1}{\sqrt{1+\phi_1(x)}}\big)\phi_2^2(x)\,dx \\
		\geq \int_{0}^{\Delta^-} \big(1-\frac{1}{\sqrt{1+\phi_1(x)}}\big)\phi_2^2(x)\,dx +
		     \int_{\Delta^+}^1 \big(1-\frac{1}{\sqrt{1+\phi_1(x)}}\big)\phi_2^2(x)\,dx 
		 -  \int_{\Delta^-}^{\Delta^+} \big(1-\frac{1}{\sqrt{1+\epsilon}}\big)M_2\,dx \\
=
\int_{0}^{\Delta^-} \big(1-\frac{1}{\sqrt{1+\phi_1(x)}}\big)\phi_2^2(x)\,dx 
+\int_{\Delta^+}^1  \big(1-\frac{1}{\sqrt{1+\phi_1(x)}}\big)\phi_2^2(x)\,dx -
2\,\delta\,{b(\epsilon)M_2 },
			\end{multline}
where, for simplicity, we introduced the notation  
$b(\epsilon) = 1- (1+\epsilon)^{-1/2}$. 
Let $x_1\notin [\Delta^-,\Delta^+]$ be an interior point  where  $\phi_2^2(x_1)$ is strictly positive, e.g., $\phi_2^2$ has a local maximum 
at $x_1$. Then there exist $\sigma(\epsilon) \geq\delta$   such  that
$\big(1-(1+\phi_1(x))^{-1/2}\big)\phi_2^2(x)\geq 4 \,b(\epsilon)M_2$ 
for all $x\in (x_1-\sigma,x_1+\sigma) {\color{black} \subset (0,1)}$.
	Let us denote $\Sigma^-=x_1-\sigma$ and $\Sigma^+=x_1+\sigma$. Without loss of generality, we can
	assume that $\Sigma^+ < \Delta^-$. 
Finally, we have
\begin{multline}\label{3_5_16_ac}
\int_{0}^1 \big(1-\frac{1}{\sqrt{1+\phi_1(x)}}\big)\phi_2^2(x)\,dx \geq 
\int_{0}^{\Sigma^-} b(\epsilon)\phi_2^2(x)\,dx + \int_{\Delta^+}^{1} b(\epsilon)\phi_2^2(x)\,dx  \\
+ \int_{\Sigma^-}^{\Sigma+}  \big(1-\frac{1}{\sqrt{1+\phi_1(x)}}\big)\phi_2^2(x)\,dx+ 
\int_{\Sigma^+}^{\Delta^-} b(\epsilon)\phi_2^2(x)\,dx -2\,\delta\,b(\epsilon)\,M_2 \\
\geq 
\int_{0}^{\Sigma^-} b(\epsilon)\phi_2^2(x)\,dx + \int_{\Delta^+}^{1} b(\epsilon)\phi_2^2(x)\,dx 
+ \int_{\Sigma^+}^{\Delta^-} b(\epsilon)\phi_2^2(x)\,dx  \\
+ 2\,\int_{\Sigma^-}^{\Sigma+} b(\epsilon)\,M_2\,dx +
0.25\int_{\Sigma^-}^{\Sigma+} b(\epsilon)\phi_2^2(x)\,dx + 0.25\int_{\Delta^-}^{\Delta+} b(\epsilon)\phi_2^2(x)\,dx
 -2\,\delta\,b(\epsilon)\,M_2 \\
\geq 
\int_{0}^{\Sigma^-} b(\epsilon)\phi_2^2(x)\,dx + \int_{\Delta^+}^{1} b(\epsilon)\phi_2^2(x)\,dx +
\int_{\Sigma^+}^{\Delta^-} b(\epsilon)\phi_2^2(x)\,dx\\
0.25\int_{\Sigma^-}^{\Sigma+} b(\epsilon)\phi_2^2(x)\,dx +
0.25\int_{\Delta^-}^{\Delta+} b(\epsilon)\phi_2^2(x)\,dx,
\end{multline}
where we used that $\sigma \geq\delta$ and $\int_{\Sigma^-}^{\Sigma+}b(\epsilon)M_2\,dx\geq 
\int_{\Delta^-}^{\Delta+}b(\epsilon)\phi_2^2(x)\,dx.$ This proves the assertion. 
$\BLACKBOX$  
\end{proof} 
}
\begin{proposition}\label{Propos_2}
	Let $F_1$ and $F_2$ be the two opposite faces of a gap (or overlapping region) and 
	let $\mathbf{\Phi}_{12}:F_{1}\rightarrow F_{2}$ and $\mathbf{\Phi}_{21}:F_{2}\rightarrow F_{1}$
	be the two parametrization mappings with Jacobian norms $\|J_{\mathbf{\Phi}_{12}}\|$ and $\|J_{\mathbf{\Phi}_{21}}\|$, 
	respectively. 
	There 	exist a constant $C >0$ such that the following inequality holds:
	\begin{flalign}\label{3_5_16_a}
	\nonumber
		 \int_{F_{1}}\big(\phi_{1,h}(x_1)-\phi_{2,h}(\mathbf{\Phi}_{12}(x_1)\big)\phi_{1,h}(x_1)\, dx_1 + 
		 \int_{F_{2}}\big(\phi_{2,h}(x_2)-\phi_{1,h}(\mathbf{\Phi}_{21}(x_2))\big)\phi_{2,h}(x_2)\, dx_2 \\
		 \geq C \big(\|\phi_{1,h}\|^2_{L^2(F_1)} +\|\phi_{2,h}\|^2_{L^2(F_2)}\big).
	\end{flalign}
\end{proposition}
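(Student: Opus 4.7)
My approach has three parts: bring the two integrals onto a common domain via the change of variables from~(\ref{U_o3U_o2_a2}), use an algebraic identity to isolate non-negative squares plus a single remainder, and then invoke Proposition~\ref{Propos_1} to absorb that remainder.

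First, I would substitute $x_2 = \mathbf{\Phi}_{12}(x_1)$ in the second integral, using $dx_2 = \|J_{\mathbf{\Phi}_{12}}\|\,dx_1$. Writing $a(x_1) := \phi_{1,h}(x_1)$, $b(x_1) := \phi_{2,h}(\mathbf{\Phi}_{12}(x_1))$, and $J(x_1) := \|J_{\mathbf{\Phi}_{12}}(x_1)\|$, the sum of the two integrals takes the form
\[
\int_{F_1}\bigl[a(a-b) + J\,b(b-a)\bigr]\,dx_1 \;=\; \int_{F_1}(a-b)(a - Jb)\,dx_1,
\]
while the right-hand side of the claim becomes $\int_{F_1}(a^2 + Jb^2)\,dx_1$. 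By~(\ref{parmatric_lr}) (or~(\ref{parmatric_lro})), one has $J = \sqrt{1+|\nabla\zeta|^2}\ge 1$, with equality only on a set of measure zero, since $\zeta$ is a non-trivial B-spline.

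Second, I would exploit the algebraic identity
\[
(a-b)(a - Jb) \;=\; \tfrac{1}{2}(a-b)^2 \;+\; \tfrac{1}{2}(a - Jb)^2 \;-\; \tfrac{1}{2}(J-1)^2\,b^2,
\]
so that the only potentially negative contribution is $-\tfrac{1}{2}\int_{F_1}(J-1)^2 b^2\,dx_1$. Since $J$ is uniformly bounded by Assumption~\ref{smooth_Phi_i} on $\mathbf{\Phi}_{12}$, the factorisation $(J-1)^2 = J^2(1 - 1/J)^2 \le J_{\max}^2\,(1 - 1/J)$ shows that this remainder is controlled, up to a geometric constant, by $\int_{F_1}(1 - 1/J)\,b^2\,dx_1$, which is precisely the integral appearing in Proposition~\ref{Propos_1}. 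Invoking Proposition~\ref{Propos_1} with $\phi_1 \leftrightarrow |\nabla\zeta|^2$ and $\phi_2 \leftrightarrow b$ (a piecewise B-spline on $F_1$, obtained as the pull-back of $\phi_{2,h}$ through the B-spline map~$\mathbf{\Phi}_{12}$), together with a weighted Young's inequality that redistributes mass between the two non-negative squares $(a-b)^2$, $(a-Jb)^2$ and the target $\int(a^2 + Jb^2)\,dx_1$, will then deliver the coercivity bound with a constant $C>0$ depending on the geometry of the gap/overlap (through $\zeta$) and on the B-spline space, but independent of the mesh size~$h$.

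The hard part will be the absorption in the last step. Since the pointwise quadratic form $a^2 + Jb^2 - (1+J)ab$ has non-positive discriminant $-(J-1)^2/4$, no purely pointwise Young-type estimate can yield a strictly positive coercivity constant; it is precisely the global B-spline structure encoded in Proposition~\ref{Propos_1}, which prevents the trace $b$ from concentrating on the measure-zero set where $\nabla\zeta$ vanishes, that converts this pointwise deficit into an integrated positive bound and thereby closes the estimate.
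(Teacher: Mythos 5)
Your reduction of the left--hand side to $\int_{F_1}(a-b)(a-Jb)\,dx_1$ and the algebraic identity $(a-b)(a-Jb)=\tfrac12(a-b)^2+\tfrac12(a-Jb)^2-\tfrac12(J-1)^2b^2$ are both correct, but the absorption step cannot be closed, and the obstruction is structural. Setting $u=a-b$ and $v=a-Jb$, the difference $u-v=(J-1)b$ does not depend on $a$, so for fixed $b$ the pointwise infimum over $a$ of $\tfrac12 u^2+\tfrac12 v^2-\tfrac12(J-1)^2b^2$ equals $\tfrac14(J-1)^2b^2-\tfrac12(J-1)^2b^2=-\tfrac14(J-1)^2b^2$, attained at $a=\tfrac{1+J}{2}\,b$. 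When $\zeta$ is affine and non-constant, $J=\sqrt{1+|\nabla\zeta|^2}$ is a constant strictly greater than $1$ and the minimizing pair ($b\equiv 1$, $a\equiv\tfrac{1+J}{2}$) consists of constants, which lie in every B-spline trace space; the deficit is therefore realized globally by admissible functions and is not a concentration phenomenon near the zero set of $\nabla\zeta$, so no appeal to the ``global B-spline structure'' can restore positivity from this decomposition. In addition, Proposition~\ref{Propos_1} is invoked in the wrong direction: it furnishes a \emph{lower} bound for $\int(1-1/\sqrt{1+\phi_1})\,\phi_2^2$, which is useful only when that quantity enters the estimate with a \emph{positive} sign, whereas in your decomposition the comparable quantity sits inside the negative remainder $-\tfrac12\int(J-1)^2b^2$, and lower-bounding it only makes that remainder more negative.

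The paper's own proof avoids this trap by a sign-compatible route: it applies Young's inequality with the weight $\|J_{\mathbf{\Phi}_{12}}\|$ (resp.\ $\|J_{\mathbf{\Phi}_{21}}\|$) to each cross term \emph{before} pulling back, so that after the change of variables (\ref{U_o3U_o2_a2}) the weighted squares cancel exactly against one half of the full $L^2$ norms, leaving $\tfrac12\int_{F_1}\big(1-1/\|J_{\mathbf{\Phi}_{12}}\|\big)\phi_{1,h}^2\,dx_1+\tfrac12\int_{F_2}\big(1-1/\|J_{\mathbf{\Phi}_{21}}\|\big)\phi_{2,h}^2\,dx_2$ with a \emph{plus} sign, to which Proposition~\ref{Propos_1} is then applied directly. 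To repair your argument you would have to reproduce exactly this cancellation (choosing the Young weights to be $J$ and $1/J$ rather than splitting symmetrically into two half-squares), or else justify why configurations such as $a=\tfrac{1+J}{2}b$ are excluded -- which would require hypotheses beyond those stated in the proposition.
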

\begin{proof}
	Introducing the notation
	\begin{multline*}
	\phi_h|_{[F_1,F_2]}=\int_{F_{1}}\big(\phi_{1,h}(x_1)-\phi_{2,h}(\mathbf{\Phi}_{12}(x_1)\big)\phi_{1,h}(x_1)\, dx_1 + 
		 \int_{F_{2}}\big(\phi_{2,h}(x_2)-\phi_{1,h}(\mathbf{\Phi}_{21}(x_2))\big)\phi_{2,h}(x_2)\, dx_2,   
\end{multline*}
	 we can derive the following estimates
	\begin{multline}\label{3_5_16_c}
	\phi_h|_{[F_1,F_2]}
=	\int_{F_{1}}\phi^2_{1,h}(x_1) - \phi_{2,h}(\mathbf{\Phi}_{12}(x_1)\big)\phi_{1,h}(x_1)\, dx_1 +
	 \int_{F_{2}}\phi^2_{2,h}(x_2)-\phi_{1,h}(\mathbf{\Phi}_{21}(x_2))\phi_{2,h}(x_2)\, dx_2  \\
\geq 
\int_{F_{1}}\phi^2_{1,h}(x_1) - \frac{1}{2}\phi^2_{2,h}(\mathbf{\Phi}_{12}(x_1)\|J_{\mathbf{\Phi}_{12}}\|
-\frac{1}{2\|J_{\mathbf{\Phi}_{12}}\|}\phi^2_{1,h}(x_1)\, dx_1  \\
	 +\int_{F_{2}}\phi^2_{2,h}(x_2)-
	 \frac{1}{2}\phi^2_{1,h}(\mathbf{\Phi}_{21}(x_2))\|J_{\mathbf{\Phi}_{21}}\|
	-\frac{1}{2\|J_{\mathbf{\Phi}_{21}}\|} \phi^2_{2,h}(x_2)\, dx_2 \\
\geq 
\frac{1}{2}\int_{F_{1}} (1-\frac{1}{\|J_{\mathbf{\Phi}_{12}}\|}\big)\phi^2_{1,h}(x_1)\,dx_1 +
\frac{1}{2}\int_{F_{2}} (1-\frac{1}{\|J_{\mathbf{\Phi}_{21}}\|}\big)\phi^2_{2,h}(x_2)\,dx_2 .
	\end{multline}
Noting that $\|J_{\mathbf{\Phi}_{12}}\|=\sqrt{1+ |\nabla {\Phi}_{12,z}|^2} $   and 
$\|J_{{\mathbf{\Phi}}_{21}}\|=\sqrt{1+ |\nabla {\Phi}_{21,z}|^2}$, see (\ref{U_o3U_o2_a2}),  and
 using Proposition \ref{Propos_1}, 
 we easily obtain inequality (\ref{3_5_16_a}). 
$\BLACKBOX$  
\end{proof}

\begin{lemma}\label{lemma_0}
	The bilinear form $B_h(\cdot,\cdot)$ in (\ref{7_d5}) is bounded and elliptic on $V_h$, i.e., 
	 there are positive constants $C_M$ and $C_m$ such that the estimates
	\begin{align}\label{B_dG_bound}
		\qquad B_h(v_h,\phi_h) &\leq C_M \|v_h\|_{dG}\|\phi_h\|_{dG}
		\quad \text{and}\quad  B_h(v_h,v_h) \geq C_m \|v_h\|^2_{dG},                             
	\end{align}
	hold for all $\phi_h\in V_h$ 
	{\color{black} provided that $\eta$ is sufficiently large.}
\end{lemma}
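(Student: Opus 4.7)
My plan is to prove the two inequalities in (\ref{B_dG_bound}) separately, leveraging the machinery already in place. For boundedness, the estimate follows from specializing Lemma~\ref{lemma00} to $u=v_h\in V_h$: since $v_h$ carries no Taylor expansion, the residual contributions $\mathcal{K}_{g,o}(h^\gamma+h^\beta)$ in (\ref{7_d7a}) vanish identically, and the only remaining ingredient is to convert the boundary-trace seminorm $\rho_i^{1/2}h^{1/2}\|\nabla v_{h,i}\|_{L^2(\partial\Omega_i^*)}$ appearing there back into the volume norm via the standard discrete trace inverse inequality valid on B-spline spaces (cf.\ \cite{LT:LangerToulopoulos:2014a}). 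The Dirichlet-penalty term $\eta\rho_i h^{-1}(v_h,\phi_h)_{L^2(\partial\Omega\cap\partial\Omega_i^*)}$ added to $B_h$ in (\ref{7_d5}) is handled immediately by Cauchy--Schwarz, producing a factor already present in $\|v_h\|_{dG}\|\phi_h\|_{dG}$.

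For coercivity, I would test $B_h(v_h,v_h)$ and group the resulting terms into four classes: (i) the volume and Dirichlet-penalty terms, which reproduce the corresponding pieces of $\|v_h\|_{dG}^2$ up to the factor $\eta$; (ii) the standard SIPG contributions on matching interfaces $F_{ij}$; (iii) gap-face pairs $(F_{gi},F_{gj})$; (iv) overlap-face pairs $(F_{oij},F_{oji})$. Group (ii) follows the classical SIPG argument: I apply Young's inequality and the discrete inverse trace to bound $-\int_{F_{ij}}\{\rho\nabla v_h\}\cdot n_{F_{ij}}(v_{h,i}-v_{h,j})$ by $\epsilon\,\rho\|\nabla v_h\|_{L^2(\Omega_i^*)}^2+C(\epsilon)\eta^{-1}\cdot\eta\{\rho\}h^{-1}\|v_{h,i}-v_{h,j}\|_{L^2(F_{ij})}^2$, and absorb each piece once $\eta$ is taken sufficiently large.

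For groups (iii)--(iv), the flux terms are treated in the same spirit, transferring a trace on $F_{gi}$ to one on $F_{gj}$ through Assumption~\ref{smooth_Phi_i} together with the change-of-variables estimates (\ref{U_o3U_o2_a4})--(\ref{U_o3U_o2_a6}). The non-standard ingredient is the penalty-like contribution: pairing the integrals on $F_{gj}\subset\partial\Omega_i^*$ and on $F_{gi}\subset\partial\Omega_j^*$ produces exactly the left-hand side of (\ref{3_5_16_a}), so Proposition~\ref{Propos_2} delivers the coercive lower bound $C(\|v_{h,i}\|_{L^2(F_{gi})}^2+\|v_{h,j}\|_{L^2(F_{gj})}^2)$, which matches the gap-penalty portion of the dG-norm (\ref{3.5.2a}). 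Overlap pairs are treated identically with $\mathbf{\Phi}_{oij}$ and $\mathbf{\Phi}_{oji}$.

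The main obstacle, and the very reason Propositions~\ref{Propos_1}--\ref{Propos_2} had to be developed, is that on a gap or overlap the two geometric pieces $F_{gi}$ and $F_{gj}$ are distinct surfaces, so the usual SIPG strategy of producing a squared jump on a single face is unavailable. Proposition~\ref{Propos_2} provides the correct substitute by showing that the transported-mismatch bilinear form still dominates the sum of the one-sided $L^2$ norms that enter $\|v_h\|_{dG}$. With this ingredient in hand, the remainder of the coercivity argument reduces to choosing $\eta$ large enough to absorb all constants coming from Young's inequality and the inverse trace, which yields the stated bound $B_h(v_h,v_h)\geq C_m\|v_h\|_{dG}^2$.
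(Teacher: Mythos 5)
Your proposal is correct and follows essentially the same route as the paper: boundedness via Cauchy--Schwarz, transfer of integrals between the two faces of each gap/overlap through the parametrizations and the change-of-variables bounds (\ref{U_o3U_o2_a4})--(\ref{U_o3U_o2_a6}), and discrete trace inequalities; coercivity via the SIPG absorption argument with $\eta$ large, with Proposition~\ref{Propos_2} (built on Proposition~\ref{Propos_1}) supplying the lower bound for the paired penalty terms on the geometrically distinct gap/overlap faces, exactly as in the paper's estimates (\ref{3_5_24a})--(\ref{3_5_24aa}). The only minor imprecision is the phrase ``specializing Lemma~\ref{lemma00} to $u=v_h$'': the proof of that lemma rewrites the gap fluxes using the exact-solution interface conditions and Taylor expansions, which are unavailable for a generic $v_h\in V_h$, so for discrete functions one must instead bound those face integrals directly (as you in fact do elsewhere via the change of variables and the inverse trace inequality).
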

\begin{proof}
Let $v_h,\,\phi_h\in V_h$.
A simple application of the Cauchy-Schwartz inequality (\ref{HolderYoung}) yields
\begin{flalign}\label{3_5_22_a}
	\sum_{i=1}^N\int_{\Omega_i^*}\rho_i\nabla v_{i,h}\cdot\nabla \phi_{i,h}\,dx & \leq \|v_{h}\|_{dG}\,\|\phi_{h}\|_{dG}.
\end{flalign}	
Now, proceeding as in (\ref{7_d7a_0}) and then using (\ref{HolderYoung}) and trace inequalities for B-spline functions, 
see \cite{LT:LangerToulopoulos:2014a}, we can infer that
\begin{flalign}\label{3_5_22}
\nonumber
	\sum_{i=1}^N\sum_{F_{ij}\subset \partial\Omega_i^*}
\Big|  \int_{F_{ij}}\Big(\frac{\rho_i}{2}\nabla v_{i,h}+\frac{\rho_j}{2}\nabla v_{j,h}\Big)\cdot n_{F_{ij}}\phi_{i,h}  -
  \frac{\eta\{\rho\}}{h}\big(v_{i,h}-v_{j,h}\big)\phi_{i,h}\, d\sigma \Big|\\
   +\Big|\int_{\partial \Omega_i^* \cap \partial \Omega}\rho_i\nabla v_{i,h}\cdot n_{\partial \Omega_i^*} v_{i,h}\,d\sigma \Big| 
   \leq  C_{tr}\| v_h\|_{dG} \|\phi_{h}\|_{dG}.
\end{flalign}
For the fluxes on the faces of the gap boundaries, we follow  the same steps as in (\ref{3_5_22}).
Using using (\ref{U_o3U_o2_a6}),
we have that 
 \begin{multline}\label{3_5_23}
 	\Big|\sum_{i=1}^N\sum_{F_{gj}\subset \partial\Omega_i^*}
 	\int_{F_{gj}}\Big(\frac{\rho_i}{2}\nabla v_{i,h}+\frac{\rho_j}{2}\nabla v_{j,h}\Big)\cdot n_{F_{gj}}\phi_{i,h}  -
  \frac{\eta\{\rho\}}{h}\big(v_{i,h}-v_{j,h}\big)\phi_{i,h}\, d\sigma \Big| \\ 
  \leq 
  \sum_{i=1}^N\sum_{F_{gj}\subset \partial\Omega_i^*}
 	\int_{F_{gj}}\Big(\frac{\rho_i}{2}|\nabla v_{i,h}|\,|\phi_{i,h}|+
 	\frac{\rho_j}{2}|\nabla v_{j,h}|\,|\phi_{i,h}|  +
  \frac{\eta\{\rho\}}{h}\big(|v_{i,h}|\,|\phi_{i,h}| +  |v_{j,h}|\,|\phi_{i,h}|\big)\, d\sigma  \\ 
    \leq 2C_{tr}C(\rho)C(\|J_{\mathbf{\Phi}_{gij}}\|) \Big(\sum_{i=1}^N
   h\rho_i\| \nabla v_{i_h}\|^2_{L^2(\partial \Omega^*_{i})}\Big)^{\frac{1}{2}}\Big(\sum_{i=1}^N
   h^{-1}\{\rho\}\| \phi_{i,h}\|^2_{L^2(\partial \Omega^*_{i})}\Big)^{\frac{1}{2}} \\
  + 2C(\|J_{\mathbf{\Phi}_{gij}}\|) \Big(\sum_{i=1}^N
   \frac{\eta\{\rho\}}{h}\|  v_{i_h}\|^2_{L^2(\partial \Omega^*_{i})}\Big)^{\frac{1}{2}}\Big(\sum_{i=1}^N
   \frac{\eta\{\rho\}}{h}\| \phi_{i,h}\|^2_{L^2(\partial \Omega^*_{i})}\Big)^{\frac{1}{2}} 
   \leq 
   C \|v_{h}\|_{dG}\,\|\phi_{h}\|_{dG}.
 \end{multline}
For the interior faces of the overlapping patches, we follow the same procedure as in (\ref{3_5_23}):
\begin{multline}\label{3_5_24}
\Big| \sum_{i=1}^N\sum_{F_{oij}\subset \partial\Omega_i^*}  \int_{F_{oij}}\Big(\frac{\rho_i}{2}\nabla v_{i,h}+\frac{\rho_j}{2}\nabla v_{j,h}\Big)\cdot n_{F_{oij}}\phi_{i,h}  -
  \frac{\eta\{\rho\}}{h}\big(v_{i,h}-v_{j,h}\big)\phi_{i,h}\, d\sigma\Big| \leq 
   C \|v_{h}\|_{dG}\,\|\phi_{h}\|_{dG}.
\end{multline}
Collecting the above 
estimates,
we can derive the left inequality in (\ref{B_dG_bound}).
\par
In order to show the coercivity of $B_h(\cdot,\cdot)$ on the IgA space $V_h$,
we treat   the normal flux terms on every $\partial \Omega_i^*$ 
 as in Lemma \ref{lemma00}. More precisely, for the matching interfaces, using (\ref{HolderYoung}) and trace inequality,
  see details in \cite{LT:LangerToulopoulos:2014a}, we can show
  \begin{multline}\label{3_5_24a}
  - \sum_{i=1}^N\sum_{F_{ij}\subset \partial\Omega_i^*}
    \int_{F_{ij}}\Big(\frac{\rho_i}{2}\nabla v_{i,h}+\frac{\rho_j}{2}\nabla v_{j,h}\Big)\cdot n_{F_{ij}}v_{i,h}  +
  \frac{\eta\{\rho\}}{h}\big(v_{i,h}-v_{j,h}\big)v_{i,h}\, d\sigma \\
  \geq -C_{tr}\sum_{i=1}^N \|\rho_i^{\frac{1}{2}}\nabla u_{i,h}\|_{L^{2}(\Omega_i^*)}
         \Big(\sum_{i=1}^N \sum_{F_{ij}\subset \partial \Omega_i^*} 
         \frac{\{\rho\}}{h}\|v_i -v_j \|^2_{L^2(F_{ij})}\Big)^{\frac{1}{2}} 
+         \Big(\sum_{i=1}^N \sum_{F_{ij}\subset \partial \Omega_i^*} 
         \frac{\eta\{\rho\}}{h}\|v_i -v_j \|^2_{L^2(F_{ij})}\Big).
  \end{multline}
Following  the same steps as in (\ref{3_5_23}) and consequently 
proceeding  as in (\ref{3_5_24a}) and  using (\ref{3_5_16_aa}), 
we can derive estimate
\begin{multline}\label{3_5_24aa}
   - \sum_{i=1}^N\sum_{F_{gj}\subset \partial\Omega_i^*}
    \int_{F_{gj}}\Big(\frac{\rho_i}{2}\nabla v_{i,h}+\frac{\rho_j}{2}\nabla v_{j,h}\Big)\cdot n_{F_{oij}}v_{i,h}  +
  \frac{\eta\{\rho\}}{h}\big(v_{i,h}-v_{j,h}\big)v_{i,h}\, d\sigma \\
   - \sum_{i=1}^N\sum_{F_{oij}\subset \partial\Omega_i^*}
    \int_{F_{oij}}\Big(\frac{\rho_i}{2}\nabla v_{i,h}+\frac{\rho_j}{2}\nabla v_{j,h}\Big)\cdot n_{F_{oij}}v_{i,h}  +
  \frac{\eta\{\rho\}}{h}\big(v_{i,h}-v_{j,h}\big)v_{i,h}\, d\sigma \\  
  \geq -2C_{tr}C(\rho,\|J_{\mathbf{\Phi}_{ij}}\|) \Big(\sum_{i=1}^N
   h\rho_i\| \nabla v_{i_h}\|^2_{L^2(\partial \Omega^*_{i})}\Big)^{\frac{1}{2}}\Big(\sum_{i=1}^N
   h^{-1}\{\rho\}\| v_{i,h}\|^2_{L^2(\partial \Omega^*_{i})}\Big)^{\frac{1}{2}} + \\
   2C(\|J_{\mathbf{\Phi}_{ij}}\|) \Big(\sum_{i=1}^N
   \frac{\eta\{\rho\}}{h}\|  v_{i_h}\|^2_{L^2(\partial \Omega^*_{i})}\Big).\qquad
    \end{multline}
for the interior faces of gap and overlapping regions.   
Now, applying the second inequality of (\ref{HolderYoung}) in (\ref{3_5_24a}) and (\ref{3_5_24aa}) 
using  $\epsilon$ quite small and choosing $\eta$ to be large enough, 
we  can 
arrive at
the right inequality in (\ref{B_dG_bound}).
$\BLACKBOX$  
\end{proof} 
\par
{
The ellipticity the dG IgA bilinear form $B_h(\cdot,\cdot)$
on $V_h$ immediately yields that our dG IgA scheme (\ref{7_d4})  has a unique solution. 
}
Note that the solution $u$ satisfies   (\ref{7_d3_a}) but 
does not  satisfy the 
dG IgA variational problem
(\ref{7_d4}). The  dG IgA discretization derived is not consistent. 
We present below   the error analysis borrowing  ideas from 
weak consistent FE methods, see, e.g., \cite{ERN_FEM_book}. 
\subsection{Error estimates}
We are now in the position to
derive   error estimate for the proposed dG IgA scheme (\ref{7_d4}) under the Assumption \ref{Assumption1}. 
The procedure that we follow is similar to the corresponding procedure in \cite{HoferLangerToulopoulos_2015a} 
for the case of simple gap regions. 
The main differences  are due to the use of estimate (\ref{7_d7a}), which is referred     to the  case
of existing general gap and overlapping regions.  
 For the completeness of the paper, we  highlight below the main steps of the error analysis.  
The linearity of $B_h(\cdot,\cdot)$, see (\ref{7_d5}) and  (\ref{7_d3_b}), 
and the relations (\ref{7_d6}) and (\ref{7_d3_a}) yield
\begin{multline}\label{4.5_b}
	B_h(u_h-z_h,\phi_h) = B(u,\phi_h) +
	\sum_{i=1}^N\frac{\eta\rho}{h}\int_{\partial\Omega_i^* \cap \partial \Omega}(u-u_D) \phi_h\,d\sigma 
		-B_h(z_h,\phi_h) + F_h(\phi_h)-l_{f,\Omega_i^*}(\phi_h)  \\
=	B_h(u,\phi_h)+	R_{\Omega_o,\Omega_g}(u,\phi_h) -
	\sum_{i=1}^N\frac{\eta\rho}{h}\int_{\partial\Omega_i^* \cap \partial \Omega}u_D \phi_h\,d\sigma 
	-B_h(z_h,\phi_h)+ \sum_{i=1}^N\frac{\eta\rho}{h}\int_{\partial\Omega_i^* \cap \partial \Omega}u_D \phi_h\,d\sigma\\
	=B_h(u-z_h,\phi_h) +  R_{\Omega_o,\Omega_g}(u,\phi_h),\quad \text{for all} \quad \phi_h, {\ }z_h\in V_h.
\end{multline}
We choose $\phi_h=u_h-z_h$ in (\ref{4.5_b}). 
Let $\mathcal{K}_{g,o}$ be defined as in (\ref{Notations_gap_Overl}) and
the parameters  $\beta:=\lambda-\frac{1}{2}$, and $\gamma:=\lambda+1$.
Then, Lemma \ref{lemma_0},  Lemma \ref{lemma00} and the estimates (\ref{7_i0}) and (\ref{7_d7a})  imply
the estimate
\begin{align}\label{4.5.d}
	\|u-u_h\|_{dG} \leq C \Big( \big(\|u-z_h\|_{dG}^2 + 
   \sum_{i=1}^N \rho_i\,h\| \nabla (u-z_h)\|^2_{L^2(\partial \Omega_i^*)}\big)^{\frac{1}{2}}
+	(h^{\beta}+h^{\gamma})\,\mathcal{K}_{g,o}\Big)
\end{align}
that holds for all $z_h \in V_h$.
Now,  we can prove the main error estimate. 
 Such an estimate requires  quasi-interpolation estimates  of B-splines.  
  Using   results of multidimensional B-spline interpolation, 
 see  \cite{LT:LangerToulopoulos:2014a},   we can construct a quasi-interpolant
$\Pi_h: H^{\ell}\rightarrow V_h$ with $\ell \geq 1 $,  
  such that the following interpolation estimates are true.
 \begin{lemma}\label{lemma7}
 Let $u$ satisfy Assumption \ref{Assumption1} and let $\Pi_h$ be the interpolation operator as mentioned above.  
 Then
 there exist  constants $C_i>0$, $i=1,\ldots, N$, depending on the mappings $\mathbf{\Phi}_i$ and other geometric characteristics but independent of the grid sizes $h_i$  such that the interpolation estimate
  \begin{align}\label{4.5.d_1}
  	\Big(\|u-\Pi_h u\|^2_{dG} +\sum_{i=1}^N \rho_i\,h\| \nabla (u-\Pi_h u)\|^2_{L^2(\partial \Omega_i^*)}\Big)^{\frac{1}{2}}
  	  	& \leq   \sum_{i=1}^N C_i h_i^{s}\|u_i\|_{H^{{\color{black}\ell}}(\Omega_i^*)},
  \end{align}
   holds, with $s= \min(p+1,\ell)-1$.
 \end{lemma}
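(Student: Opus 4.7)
The plan is to construct $\Pi_h u$ patchwise by pulling back to the parametric domain, applying a standard tensor-product B-spline quasi-interpolant $\hat\Pi_{h_i}$ on $\widehat{\Omega}$, and pushing forward via $\mathbf{\Phi}_i$. Under Assumption~\ref{smooth_Phi_i} together with the quasi-uniformity in Assumption~\ref{Assumption2}, classical B-spline approximation theory, as already exploited in~\cite{LT:LangerToulopoulos:2014a}, then provides the patchwise bound
\begin{equation*}
\|u_i - \Pi_h u_i\|_{H^k(\Omega_i^*)} \leq C_i\, h_i^{\,s+1-k} \|u_i\|_{H^{\ell}(\Omega_i^*)},\qquad k=0,1,\ldots,s+1,
\end{equation*}
with $s=\min(p+1,\ell)-1$, where $C_i$ depends only on $\mathbf{\Phi}_i$, the B-spline degree $p$, and the shape-regularity parameter $\theta$ from Assumption~\ref{Assumption2}, but is independent of $h_i$. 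The bound in the $L^2$ and $H^1$ norms of the gradient enters directly into the dG-norm, while the $H^2$ bound is kept in reserve for the surface gradient term.

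Once this local estimate is available, I would bound each contribution to the left-hand side of \eqref{4.5.d_1} separately. The interior gradient term $\rho_i\|\nabla(u_i-\Pi_h u_i)\|_{L^2(\Omega_i^*)}^2$ is controlled immediately by the $k=1$ case. Each of the $h^{-1}$-weighted face integrals that appears in the definition \eqref{3.5.2a} of $\|\cdot\|_{dG}$, namely those over matching interfaces $F_{ij}$, gap faces $F_{gi}$, overlap faces $F_{oij}$, and portions of the physical boundary $\partial\Omega_i^*\cap\partial\Omega$, is then estimated element-by-element via the scaled trace inequality
\begin{equation*}
\|v\|^2_{L^2(\partial K)}\leq C\bigl(h_K^{-1}\|v\|^2_{L^2(K)} + h_K\|\nabla v\|^2_{L^2(K)}\bigr)
\end{equation*}
applied to mesh elements $K$ adjacent to the respective face. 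For the jump term $\|v_i-v_j\|_{L^2(F_{ij})}$ on matching interfaces the triangle inequality reduces it to the sum of the one-sided traces and the same argument applies. The extra boundary term $\rho_i h\|\nabla(u_i-\Pi_h u_i)\|^2_{L^2(\partial\Omega_i^*)}$ is treated by invoking the trace inequality on $\nabla(u_i-\Pi_h u_i)$ itself and using the $k=1$ and $k=2$ interpolation estimates. After all these bounds are combined and the $k$-dependent factors of $h$ are tracked, every contribution scales like $h_i^{2s}\|u_i\|^2_{H^{\ell}(\Omega_i^*)}$.

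The main obstacle is the bookkeeping of $h$-powers: the face integrals are pre-weighted by $h^{-1}$, the trace inequality itself produces both $h_K^{-1}$ and $h_K$ factors, and distinct Sobolev orders of the interpolation error must be combined on the two sides of every interface (and between a volume and a surface contribution in the extra term). The quasi-uniformity $h_i\sim h_j$ from Assumption~\ref{Assumption2} is exactly what is needed to absorb the cross-patch mismatches into a single sum $\sum_i C_i h_i^s\|u_i\|_{H^\ell(\Omega_i^*)}$ after taking a square root. Since the mappings enter the patchwise estimate only through $\|D\mathbf{\Phi}_i\|_\infty$ and $\|(\det J_{\mathbf{\Phi}_i})^{-1}\|_\infty$, both finite by Assumption~\ref{smooth_Phi_i}, the constants $C_i$ are indeed independent of $h_i$ as asserted, and the stated estimate follows.
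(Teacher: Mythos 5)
Your argument is correct and is essentially the same as the paper's, which simply defers to Lemma~10 of \cite{LT:LangerToulopoulos:2014a}: patchwise B-spline quasi-interpolation estimates transported through the geometry mappings, combined with scaled trace inequalities on the elements adjacent to each face, with the $h$-powers cancelling exactly as you track them. The only point worth making explicit is that your $k=2$ bound for the surface gradient term is available because Assumptions~\ref{Assumption1} and \ref{Assumption2_1} guarantee $s\geq 1$ and the splines are smooth enough to lie in $H^2$.
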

  \begin{proof}
  The proof for the interpolation error estimate (\ref{4.5.d_1}) can be given following the same 
  steps as in Lemma 10 from \cite{LT:LangerToulopoulos:2014a}.
  $\BLACKBOX$  
\end{proof} 

\begin{theorem}\label{Theorem_1_estimates}
Let $u$ be the solution of problem (\ref{7_d3_a}),  $u_h$ be the corresponding
dG IgA solution of problem (\ref{7_d4}), and let  $d_g = h^\lambda$ and $d_o = h^\lambda$ with $\lambda \geq 1$.  
Then the  error estimate
\begin{align}\label{4.5_e}
{
\|u-u_h\|_{dG} \lesssim  h^{r} \big( \sum_{i=1}^N\|u\|_{H^{{\ell}}(\Omega_i^*)} +
	 \mathcal{K}_{g,o}\big),
	 }
\end{align}
holds,  where $r=\min(s,\beta)$ with $s= \min(p+1,\ell)-1$ and $\beta=\lambda-\frac{1}{2}$,  and  
 $\mathcal{K}_{g,o}$ defined in (\ref{Notations_gap_Overl}). 
\end{theorem}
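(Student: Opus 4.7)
The plan is to combine the abstract Strang-type error estimate \eqref{4.5.d} with the B-spline quasi-interpolation bound of Lemma \ref{lemma7} and then simplify the resulting mixed powers of $h$. The heavy lifting has already been done: Lemma \ref{lemma4} controlled the consistency error coming from the Taylor remainders on the gap/overlap faces, Lemma \ref{lemma00} gave the boundedness needed on the larger space $V_h^*$, and Lemma \ref{lemma_0} provided ellipticity of $B_h(\cdot,\cdot)$ on $V_h$. These ingredients were combined into \eqref{4.5_b}--\eqref{4.5.d}, which hold for an arbitrary $z_h\in V_h$.

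First I would specialize $z_h$ in \eqref{4.5.d} to the quasi-interpolant $z_h=\Pi_h u$ from Lemma \ref{lemma7}. Inserting the interpolation estimate \eqref{4.5.d_1} directly bounds the first summand on the right-hand side of \eqref{4.5.d} by $\bigl(\sum_{i=1}^N C_i h_i^{s}\|u_i\|_{H^{\ell}(\Omega_i^*)}\bigr)$, with $s=\min(p+1,\ell)-1$. Under Assumption \ref{Assumption2} the local mesh sizes $h_i$ are all equivalent to the global $h$, so the interpolation contribution to the error is of order $h^{s}\sum_{i=1}^{N}\|u\|_{H^{\ell}(\Omega_i^*)}$.

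Next I would collapse the remainder term $(h^{\beta}+h^{\gamma})\mathcal{K}_{g,o}$ in \eqref{4.5.d}. With $\beta=\lambda-\tfrac{1}{2}$ and $\gamma=\lambda+1$ we have $\gamma-\beta=\tfrac{3}{2}>0$, so for $h\leq 1$ the quantity $h^{\gamma}$ is absorbed by $h^{\beta}$, and the consistency-error contribution simplifies to a constant multiple of $h^{\beta}\mathcal{K}_{g,o}$. Combining the two contributions gives
\[
\|u-u_h\|_{dG} \lesssim h^{s}\sum_{i=1}^N \|u\|_{H^{\ell}(\Omega_i^*)} + h^{\beta}\,\mathcal{K}_{g,o},
\]
and setting $r=\min(s,\beta)$ and factoring out $h^{r}$ yields the claimed bound \eqref{4.5_e}.

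There is no genuine obstacle here: all the difficult analysis (consistency via Taylor expansions, ellipticity of the penalised form, interpolation on non-trivial patchwise parametrisations) has been discharged in the preceding lemmas. The only small point worth being explicit about is that the exponent $\beta$ governing the segmentation-crime error comes from the first inequality of Lemma \ref{lemma4}, which was itself the weaker of the two Taylor bounds; thus optimal $h^{p}$-convergence in $\|\cdot\|_{dG}$ is recovered as soon as $\lambda\geq p+\tfrac{1}{2}$, i.e.\ $d_g,d_o=\mathcal{O}(h^{p+1/2})$, matching the claim made in the introduction.
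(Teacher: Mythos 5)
Your proposal is correct and follows exactly the route the paper takes: substitute the quasi-interpolant $z_h=\Pi_h u$ of Lemma \ref{lemma7} into the abstract bound \eqref{4.5.d}, absorb $h^{\gamma}$ into $h^{\beta}$ since $\gamma-\beta=3/2>0$, and take $r=\min(s,\beta)$. The paper states this in one line; your write-up merely makes the same steps explicit.
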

\begin{proof}
The required estimate  immediately follows   by   applying 
the quasi-interpolation error estimates (\ref{4.5.d_1}) in (\ref{4.5.d}).
	$\BLACKBOX$  
\end{proof} 
\begin{remark}\label{remark_3}
	The proceeding estimate is referred to the case where the maximum width 
	$d_M=\max(d_g,d_o)$ is  of order $\mathcal{O}(h^\lambda)$.
		If the widths $d_g$ and $d_o$ are fixed, 
	i.e., are not decreased as we refine the meshes, 
	then 	we can see that the bounds in (\ref{7_i0}) are given in terms of $d_M$. 
	Thus, 	using  (\ref{7_i0}) in the  analysis above 
	we can infer 	that the estimate (\ref{4.5_e}) 	will take the form
	\begin{align}\label{4.5_f}
		\|u-u_h\|_{dG} \lesssim  
		h^{s}  \sum_{i=1}^N\|u\|_{H^{{ \ell}}(\Omega_i^*)} +
	 d_M\,h^{-\frac{1}{2}}\,\mathcal{K}_{g,o},
	\end{align}
where  $s= \min(p+1,\ell)-1$ and $d_M=\max(d_g,d_o).$ 
\end{remark}
\begin{remark}\label{remark_4}
	The estimate given in (\ref{4.5_e}) concerns the distance between $u_h\in V_h$ and the solution $u\in V^*$ of
	the perturbed problem defined on $\mathcal{T}^*_H(\Omega)$. Based on (\ref{3_4_4a}), we can infer that the same estimate holds
	for the physical solution $u$ given by (\ref{Orig_weak_form_Decomp}).
\end{remark}

\section{Efficient IETI-DP Solvers}
\label{Section_IETI}
If we denote  the B-spline basis functions of $V_h$ on patch $\Omega^*_i$ by $\{B_j^{(i)}\}_{j=1}^n$, 
then the dG IgA discretization (\ref{7_d4}) leads to  the linear system
 \begin{align}
 \label{FullLinSys}
 	K\mathbf{u}=\mathbf{f},
 \end{align}
where $K$ is a symmetric, positive definite matrix assembled from the patch local matrices $K\sMP$ with entries $K_{j_1 j_2}\sMP=B_h(B_{j_2}\sMP,B_{j_1}\sMP)$   and the right hand side $\mathbf{f}$ assembled from $\mathbf{f}\sMP$ with entries $\mathbf{f}\sMP_{j_1}=F(B_{j_1}\sMP)$. 
The vector $\mathbf{u}$ is nothing but the coefficient (control point) 
representation of the dG-IgA solution $u_h\in V_h$.
 The dG-formulation (\ref{7_d4}) perfectly fits  to the method developed in \cite{HLT:HoferLanger:2016a}, the so called dG-IETI-DP method. This method is an adaption of the FETI-DP method for a composite FE and dG methods, see \cite{HLT:DryjaGalvisSarkis:2013a,HLT:DryjaSarkis:2014a}, to IgA. The first analysis of the method was done in \cite{HLT:VeigaChinosiLovadinaPavarino:2010a}, also in cases of having $C^\nu,$ $\nu>0,$ continuity across interfaces. In the subsection below, we summarize the key ingredients of the dG-IETI-DP method, and for notational simplicity, we will restrict ourselves to the two-dimensional  case. In order to use the dG-IETI-DP method for domains with gaps and overlaps, 
 some adaptations have to be done  due to the fact that
 the two sides of the interface are now not geometrically the same.
We aim at finding a reformulation of (\ref{FullLinSys}) which is easily parallelizeable and provides robustness  
with respect to the discretization parameter $h$ and 
large jumps in the diffusion coefficient $\rho$ across the interfaces.
The first step is to introduce additional dofs on the interface to decouple the  patchwise 
local problems and introducing Lagrange multipliers, say $\boldsymbol{\lambda}$, in order to enforce continuity of $u_h$. In the classical continuous Galerkin (cG) case this procedure is well known, see e.g. \cite{HLT:Pechstein:2013a,HLT:ToselliWidlund:2005a}. 
However, it is not obvious 
how this can be done in the dG case.  
An appropriate technique was first introduced in \cite{HLT:DryjaGalvisSarkis:2013a}  
and extended to three-dimensional  {\color{black}
diffusion problems}
in  \cite{HLT:DryjaSarkis:2014a}. 
The basic idea consists in introducing
an additional layer of dofs on the interface and introduce Lagrange multipliers between each layer. This enforces continuity in a certain sense. More precisely, we now work on an extended domain $\Omega_e\sMP$ given by, 
\begin{align*}
  \overline{\Omega}^{(i)}_e := \overline{\Omega^*}^{(i)} \cup \{\bigcup_{\ell\in{\mathcal{I}}_{\mathcal{F}}^{(i)}} \overline{F}_{\ell i}\},
\end{align*}
where $\mathcal{I}_{\mathcal{F}}^{(i)}$ is the set of all
indices of neighbouring patches and $\overline{F}_{\ell i}$ is the edge of ${\overline{\Omega^*}}\sMP[\ell]$ shared with ${\overline{\Omega^*}}\sMP[i]$. In the case of overlaps, we define $\overline{F}_{\ell i}$ to be $\overline{F}_{o\ell i}$, i.e., the face of ${\overline{\Omega^*}}\sMP[\ell]$, which lies in ${\overline{\Omega^*}}\sMP[i]$.
This also leads to the corresponding B-spline space 
\begin{align*}
  V_{h,e}^{(i)} := V_{h}^{(i)} + \prod_{l\in{\mathcal{I}}_{\mathcal{F}}^{(i)}}V_{h}^{(i)}(\overline{F}_{\ell i}),
\end{align*}
where $V_{h}^{(i)}(\overline{F}_{\ell i}) := \text{span}\{B_j^{(\ell)} \,|\, \text{supp}\{B_j^{(\ell)}\}\cap\overline{F}_{\ell i} \neq \emptyset\} 
\subset V_{h}^{(\ell)}$. Moreover, a function in $u_h^{(i)} \in V_{h,e}^{(i)} $ will be represented as
\begin{align}
 u_h^{(i)} = \{(u_h^{(i)})^{(i)}, \{(u_h^{(i)})^{(\ell)}\}_{\ell\in {\mathcal{I}}_{\mathcal{F}}^{(i)}}\},
\end{align}
where $(u_h^{(i)})^{(i)}$ and $(u_h^{(i)})^{(\ell)}$  are the restrictions of $u_h^{(i)}$ to ${\Omega^*}^{(i)}$ and $\overline{F}_{\ell i}$, respectively. Next, we need the notion of ``continuity'' on the interface, cf. Definition 3.1 in \cite{HLT:HoferLanger:2016a}. We say, a function 
$u_h\in V_{h,e}:=V_{h,e}^{(1)}+\cdots+V_{h,e}^{(N)}$ 
is continuous across the interface, if 
\begin{align*}
    (\boldsymbol{u}^{(i)})^{(i)}_s &=  (\boldsymbol{u}^{(\ell)})^{(i)}_t \quad\forall (s,t)\in B_e(i,\ell), \;\forall \ell\in{\mathcal{I}}_{\mathcal{F}}^{(i)},\\
    (\boldsymbol{u}^{(i)})^{(\ell)}_s &=  (\boldsymbol{u}^{(\ell)})^{(\ell)}_t \quad\forall (s,t)\in B_e(\ell,i), \;\forall \ell\in{\mathcal{I}}_{\mathcal{F}}^{(i)}
\end{align*}
holds for all $i=1,\ldots,N$, where $B_e(i,\ell)$ denotes the set of all index pairs $(s,t)$, such that the $s$-th basis function in $V_h\sMP$ can be identified with the $t$-th basis function in $V_h\sMP(\overline{F}_{\ell i})$. Since these $m$ conditions are linear, there exists a matrix $B$, which realize them in the form  $B\mathbf{u}=0$. 
Let $K_e:=\text{diag}(K_e\sMP)$ and $\mathbf{f}_e:= [\mathbf{f}_e\sMP]$. 
We can now reformulate (\ref{FullLinSys}) as follows: 
find $(u_h,\boldsymbol{\lambda}) \in V_{h,e} \times \mathbb{R}^m:$
\begin{align}
    \label{equ:saddlePoint}
     \MatTwo{K_e}{B^T}{B}{0} \VecTwo{\mathbf{u}}{\boldsymbol{\lambda}} = \VecTwo{\mathbf{f}_e}{0}
 \end{align}
 In order to guarantee the invertibility of $K_e$ and a global information transfer,
 we introduce the set of primal variables   $\Psi \subset V_{h,e}^{'}$ and consider the system, restricted to the space 
 \begin{align*}
 	\widetilde{V}_{h,e}=\{v\in V_{h,e}: \psi(v\sMP)= \psi(v\sMP[\ell]), \forall \psi\in \Psi, \forall i>\ell\}.
 \end{align*}
Typical choices for $\Psi$ are:
\begin{itemize}
    \item Vertex evaluation: $\psi^\mathcal{V}(v) = v(\mathcal{V})$,
    \item Face averages: $\psi^{F}(v) = \frac{1}{|{F}|}\int_{{F}}v\,ds{\ } \text{for all}\,F\in\mathcal{F}^*$,
\end{itemize}
where the definitions have to be adopted in order to fit to the additional layer on the interface, cf. Definition 3.2 in \cite{HLT:HoferLanger:2016a}. 
Finally, from (\ref{equ:saddlePoint}), we obtain the {dG IETI-DP system} 
 of the form 
\begin{align}
    \label{equ:saddlePointReg}
     \MatTwo{\widetilde{K}_e}{\widetilde{B}^T}{\widetilde{B}}{0} \VecTwo{\mathbf{u}}{\boldsymbol{\lambda}} = \VecTwo{\widetilde{\mathbf{f}_e}}{0},
 \end{align}
 where $\widetilde{K}_e$ is SPD. From this, we derive the Schur complement problem:
 \begin{align}
   \label{equ:SchurFinal}
      \text{Find } \boldsymbol{\lambda} \in \mathbb{R}^m: \quad F_{S}\boldsymbol{\lambda} = \mathbf{d},
\end{align}
where $F_{S}:= \widetilde{B} \widetilde{K}_e^{-1}\widetilde{B}^T \text{and } \mathbf{d}:= \widetilde{B}\widetilde{K}_e^{-1} \widetilde{\mathbf{f}_e}$. 
The system \ref{equ:SchurFinal} is solved by means of
the preconditioned conjugate gradient method, using the so called scaled Dirichlet preconditioner $M_{sD}^{-1}$
defined by
\begin{align}
   \label{equ:scaled_Dirichlet}
	M_{sD}^{-1} := B_D S_e B_D^T,
   \end{align}
where $S_e$ denotes the
block diagonal matrix of the patch local Schur complements $S_e\sMP$, 
and $B_D$ is a scaled version of the matrix $B$. The matrix $S_e\sMP$ is defined by $ {K}^{(i)}_{e,B_e B_e } - {K}^{(i)}_{e,B_e I}\left({K}^{(i)}_{e,II}\right)^{-1} {K}^{(i)}_{e,IB_e}$, where the subscript  $I$ denotes the interior dofs and $B_e$ the dofs on the interface and additional layer. 
The matrix $B_D$ is defined such that the operator enforces the constraints
\begin{align*}
  {\delta^\dagger}^{(\ell)}_t(\boldsymbol{u}^{(i)})^{(i)}_s -  {\delta^\dagger}^{(i)}_s(\boldsymbol{u}^{(\ell)})^{(i)}_t &= 0 \quad\forall (s,t)\in B_e(i,\ell), \;\forall \ell\in{\mathcal{I}}_{\mathcal{F}}^{(i)},\\
  {\delta^\dagger}^{(\ell)}_t(\boldsymbol{u}^{(i)})^{(\ell)}_s -  {\delta^\dagger}^{(i)}_s(\boldsymbol{u}^{(\ell)})^{(\ell)}_t &= 0 \quad\forall (s,t)\in B_e(\ell,i), \;\forall \ell\in{\mathcal{I}}_{\mathcal{F}}^{(i)},
\end{align*}
for $(s,t)\in B_e(i,\ell)$,  where
$\textstyle{
{\delta^\dagger}^{(i)}_s := 
  \eta^{(i)}_s / \sum_{l \in {\mathcal{I}}_{\mathcal{F}}^{(i)}} \eta^{(l)}_t}
  $
 is an appropriate scaling. Typical choices for $\eta^{(i)}_s$ are
$ \eta^{(i)}_s := \rho^{(i)} = \rho(x)|_{\Omega^{(i)}}$ (coefficient scaling)
and $\eta^{(i)}_s := {\boldsymbol{K}_e}_{s,s}^{(i)}$ (stiffness scaling).
For an efficient way to implement the action of $F$ and $M_{sD}^{-1}$, we refer, e.g., to \cite{HLT:Pechstein:2013a,HLT:HoferLanger:2016b}. As concluded in \cite{HLT:HoferLanger:2016b}, we expect the condition number bound
\begin{align}
\label{equ:condBound}
       \kappa(M_{sD}^{-1} F_{S}|_{\text{ker}(F_S)^\perp}) \leq C \max_i\left(1+\log\left(\frac{H^{(i)}}{h^{(i)}}\right)\right)^2,
\end{align}
where $H^{(i)}$ and $h^{(i)}$ are the patch size and mesh size, respectively, 
and the positive  constant $C$ is independent of $H^{(i)}$, $h^{(i)}$ and $\rho$.
\section{Numerical tests}
\label{Section_numerics}
In this section, we perform several numerical tests  with different shapes of gap and overlap regions as well as combinations with inhomogeneous diffusion coefficients for two- and three- dimensional problems. We investigate  the order of accuracy of the dG IgA scheme proposed in (\ref{7_d5}).  All examples have been performed using  second degree ($p=2$)  B-spline spaces, apart from one where   third degree ($p=3$) B-splines have been used. 
We  compare the error convergence rates versus the grid size $h$ for several  gap/overlapping  distances $d_M=\max\{d_o,d_g\}=h^\lambda$, with  $\lambda \in \{1,2,2.5,3\}$. Every example has been solved
 applying several mesh refinement steps with $h_i,h_{i+1},\ldots,$ satisfying Assumption \ref{Assumption2}.
 The numerical convergence rates $r$ have been  computed by the ratio $r =\frac{\ln(e_i/e_{i+1})}{\ln (h_i /h_{i+1} )}, \,i=1,2,\ldots$, where the error $e_i:=\|u-u_h\|_{dG}$ is always computed  
on the  meshes $\cup_{i=1}^N T^{(i)}_{h_i,\Omega_i^*}$.
We mention that, in the test cases, we use highly smooth solutions on each patch, i.e., $p+1 \leq \ell$, and therefore the order  $s$ in  (\ref{4.5_e}) and (\ref{4.5.d_1}) becomes $s = p$.
    The predicted values of  power  $\beta$, the order $s$ and the order $r$ in (\ref{4.5_e}) for 
    several values of $\lambda$  are displayed in Table \ref{table_value_r}. 
All tests have been performed in G+SMO \cite{gismoweb},
which is a generic object-oriented C++ library for IgA computations,
see also \cite{HLT:JuettlerLangerMantzaflarisMooreZulehner:2014a,LangerMantzaflarisMooreToulopoulos_IGAA_2014a}.
%
\begin{table}
  \centering
  \begin{tabular}{|c||c|c|c|c|}
 \hline
  & \multicolumn{4}{|c|}{B-spline degree $p$   }\\ \hline  
  & \multicolumn{4}{|c|}{Smooth solutions, $u\in H^{\ell\geq p+1}$}  \\ \hline
   $d_M=h^\lambda$       &$\lambda=1$& $\lambda=2$ &$\lambda=2.5$ &$\lambda=3$\\\hline
  $\beta:=$              &0.5        &  1.5        & 2                    &2.5\\   \hline
  $s:=$                  &$p$        &  $p$        & $p$                   &$p$\\   \hline
  $r:=$                  &$0.5$      &  $1.5$      & $\min(p,\beta)$       &$\min(p,\beta)$\\ 
 \hline
\end{tabular}
\caption{The values of the predicted order $r$ of the estimate (\ref{4.5_e}) in Theorem 1.}
\label{table_value_r}
\end{table}
In any test case, the  gap and overlap regions  are artificially created by moving 
the control points, which are related to the interfaces $F_{ij}$, 
in the direction of $n_{F_{ij}}$ or of $-n_{F_{ij}}$.  In order to solve the resulting linear system, we use the dG-IETI-DP method, 
that is described in  Section~\ref{Section_IETI},  
as a fast and robust solver, see also \cite{HLT:HoferLanger:2016a}. 
We utilize OpenMP to
parallelise this method with respect to the patches. We use the conjugate gradient method for solving (\ref{equ:SchurFinal}) preconditioned with the scaled Dirichlet preconditioner (\ref{equ:scaled_Dirichlet}). We start with zero initial guess and iterate until we reach a reduction of the initial residual in the $\ell_2$-norm by a factor $10^{-10}$.
\par
The numerical examples presented in \cite{HoferLangerToulopoulos_2015a} and \cite{HoferToulopoulos_IGA_Gaps_2015a} have been performed  on domains described by two patches and gaps only. 
In this work, we extend the examples to regions with gaps and overlaps, 
and to domains consisting of several patches. This gives rise to an efficient use of domain decomposition solvers
like the dG-IETI-DP method
introduced in Section~\ref{Section_IETI}. 
Moreover,   the introduction of dG techniques
on the subdomain interfaces makes the use of non-matching meshes  easier, see \cite{LT:LangerToulopoulos:2014a}.
Keeping a constant linear relation between the sizes of the different meshes, 
the approximation properties
of the method are not affected, see \cite{LT:LangerToulopoulos:2014a}. 
In the examples  below, we exploit this advantage of the dG methods and 
first solve two-dimensional problems considering
non-matching meshes.  The   convergence rates are expected  to be the same as those displayed in Table \ref{table_value_r}. 
\subsection{Two-dimensional numerical examples}
The control points with the corresponding knot vectors of the domains given in Example 1-3 are available under the names \verb+yeti_mp2.xml+, \verb+9pSquare.xml+ and \verb+4x1pCurved.xml+ as \verb+xml+ files in 
G+SMO\footnote{G+SMO: https://www.gs.jku.at/trac/gismo}.

\paragraph{Example  1: uniform diffusion coefficient $\rho_i=1,\, i=1,\ldots,N$.} 
The first numerical example is a simple test case demonstrating
the  applicability of the proposed technique for constructing 
dG IgA scheme on 
{\color{black}  segmentations
}
including  gaps and overlaps  with general shape. 
The domain  $\Omega$ with the $N=21$ subdomains $\Omega_i^*$ 
 and the initial mesh  are shown in Fig.~\ref{Fig1_Test_1Gaps}(a). We note that we consider non-matching meshes across the interfaces. 
The Dirichlet boundary condition and the right hand side $f$ are determined by the exact solution 
$u(x,y)=\sin(\pi(x+0.4)/6)\sin(\pi(y+0.3)/3)+x+y $. 
 In this example, we consider the homogeneous diffusion case, i.e., 
$\rho_i=1$ for all  $\Omega_i^*,\,i=1,\ldots,N$.\\
 We performed {four} groups of  computations, where for every group 
 the size  $d_M=\max\{d_g,d_o\}$ was defined to be $\mathcal{O}(h^\lambda)$, 
 with $\lambda \in \{1,2, 2.5,3\}$. 
 In Fig.~\ref{Fig1_Test_1Gaps}(b) we present the discrete solution for $d_M = h$.
 Since we are using second-order ($p=2$) B-spline space, based on Table 
 \ref{table_value_r}, we expect optimal convergence rates for $\lambda =2.5$ and $\lambda = 3$.  
 The numerical convergence rates for several levels of mesh refinement are plotted in Fig.~\ref{Fig1_Test_1Gaps}(c). They are in very good	agreement with the theoretically predicted estimates given in Theorem \ref{Theorem_1_estimates}, 
 see also Table \ref{table_value_r}. We observe that we have optimal rates for the cases where $\lambda \geq 2.5$.
    As a second test in the same example,  we solve
   the problem considering  gap and overlapping regions with fixed width $d_M=0.004$.                 
                   Since the width  of the gap remains fixed,  Theorem \ref{Theorem_1_estimates} yields  
                   the   error estimate given in (\ref{4.5_f}). 
                                    We start by solving  the problem on coarse meshes with 
                   $h^2 > {d_M}h^{-\frac{1}{2}}$,           then we continue to use finer meshes 
                   and finally we solve the problem on meshes  with grid size $h^2 < {d_M}h^{-\frac{1}{2}}$.
                        The convergence rates associated with this computations  are shown in Fig. \ref{Fig1_Test_1Gaps}(d).
                   For the first mesh levels,           
                   we obtain the expected optimal convergence rates, since the error coming from the approximation properties of the B-spline space, see first term in (\ref{4.5_f}), dominates the 
                   the  approximation error coming from the approximation of the normal fluxes on 
                   $\partial \Omega_{gij}$ and $\partial \Omega_{oij}$.
                   In the finer levels, where $h^2 \sim {d_M}h^{-\frac{1}{2}}$,
                   the  rates  are  gradually  reduced where eventually become close to zero, 
                   because the whole discretization error  is not further decreasing as we refine the mesh. 
                   As we move into the most refined meshes, 
                   the error related to  the approximation of B-splines is negligible compared to the error related
                   to the approximation of the fluxes on  $\partial \Omega_{gij}$ and $\partial \Omega_{oij}$,
                   which in fact, based on the  form of
                   the   second term     on the right hand side in (\ref{4.5_f}), this error 
                   seems to increase with rate $-0.5$.  
                 In the numerical computations, this fact is depicted in the negative rate $r=-0.48$ that we have found 
                      on the last  level of refinement in  Fig.  \ref{Fig2_Test_2Gaps}(d).                                  
\\
The dG-IETI-DP method as described in Section \ref{Section_IETI} performs very well. The condition number $\kappa$ and CG iterations (It.) are summarized in Table~\ref{IETI_Ex1} for the case $d_M=h$ with coefficient and stiffness scaling. We observe the quasi-optimal behaviour of the condition number with respect to $h$ and that the existence of gaps and overlaps does not 
{\color{black}
affect
}
the condition number, cf. (\ref{equ:condBound}). Moreover, both scaling gives nearly identical results.
 \begin{table}[htp]
\centering
\begin{tabular}{|r|c|cc|cc|cc|cc|}\hline
 \multicolumn{2}{|c|}{  }& \multicolumn{4}{|c|}{$W^{\mathcal{V}}$}& \multicolumn{4}{|c|}{$W^{\mathcal{V}+\mathcal{F}}$} \\ \hline
 \multicolumn{2}{|c|}{ } &  \multicolumn{2}{|c|}{coefficient scal.} & \multicolumn{2}{|c|}{stiffness scal.} &  \multicolumn{2}{|c|}{coefficient scal.} & \multicolumn{2}{|c|}{stiffness scal.} \\ \hline
	    $\#$ dofs & $H/h$ & $\kappa$ & It. & $\kappa$ & It. &$\kappa$ & It. & $\kappa$&  It. \\ \hline
	      654   & 4    &   1.61  &  10  &   1.61   &     10  &   1.24  &  7    &  1.25  &   7    \\ \hline
	      1616  & 8   &    2.02  &  12  &   2.02   &     11  &   1.36  &  8    &  1.35  &   8    \\ \hline
	      4716  & 16   &   2.48  &  13  &   2.51   &     13  &   1.56  &  10   &  1.56  &   10   \\ \hline
	      15620 & 32   &   3.00  &  14  &   3.08   &     15  &   1.88  &  11   &  1.88  &   11   \\ \hline
	      56244 & 64   &   3.56  &  15  &   3.78   &     16  &   2.25  &  12   &  2.26  &   12   \\ \hline
	      212756& 128  &   4.17  &  17  &   4.66   &     18  &   2.68  &  14   &  2.73  &   14  \\ \hline
	      826836& 256  &   4.82  &  18  &   5.94   &     21  &   3.17  &  15   &  3.28  &   17  \\ \hline
  \end{tabular}
  \caption{Example 1: Dependence of the condition number $\kappa$ and the number It. of iterations on $H/h$ for the preconditioned system with coefficient and stiffness scaling. 
  Choice of primal variables: vertex evaluation (left), vertex evaluation and edge averages (right).}
\label{IETI_Ex1}
  \end{table}

   \begin{figure}[h]
 \subfigure[]{\includegraphics[width=4.0cm, height=5.5cm]{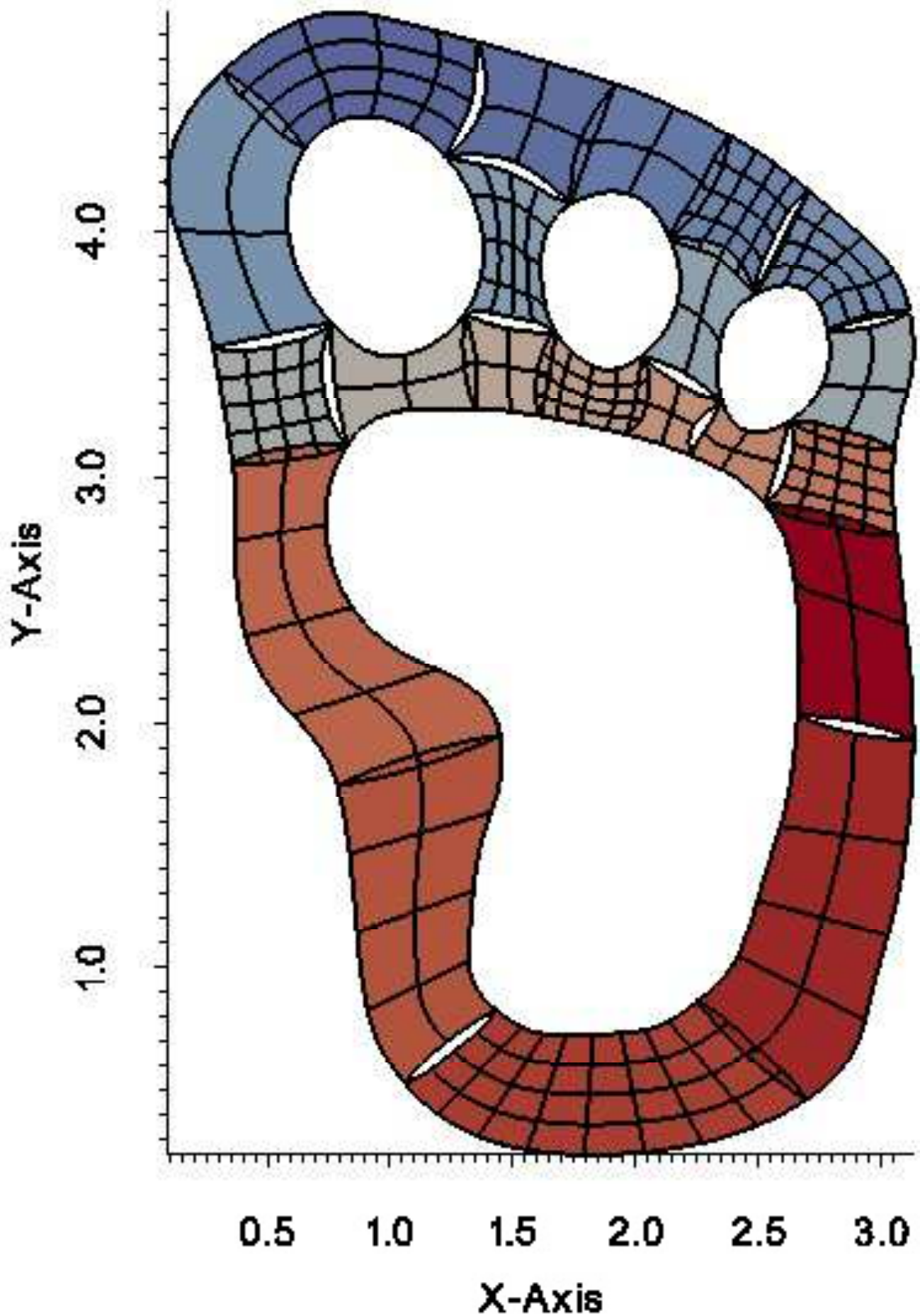}}\quad
 \subfigure[]{\includegraphics[width=4.0cm, height=5.5cm]{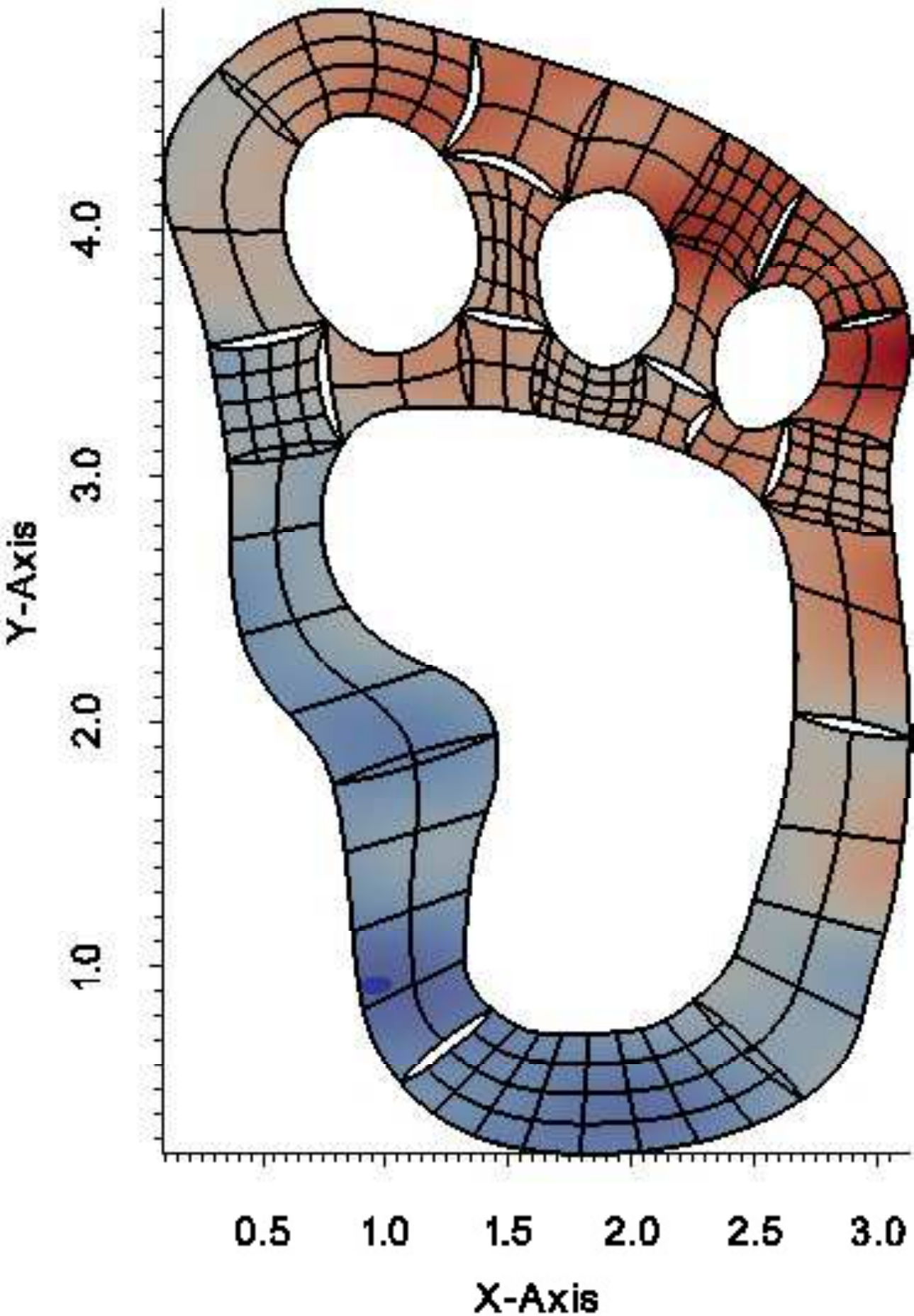}}\quad
\subfigure[]{\raisebox{1cm}{\includegraphics[width=4.20cm, height=4.25cm]{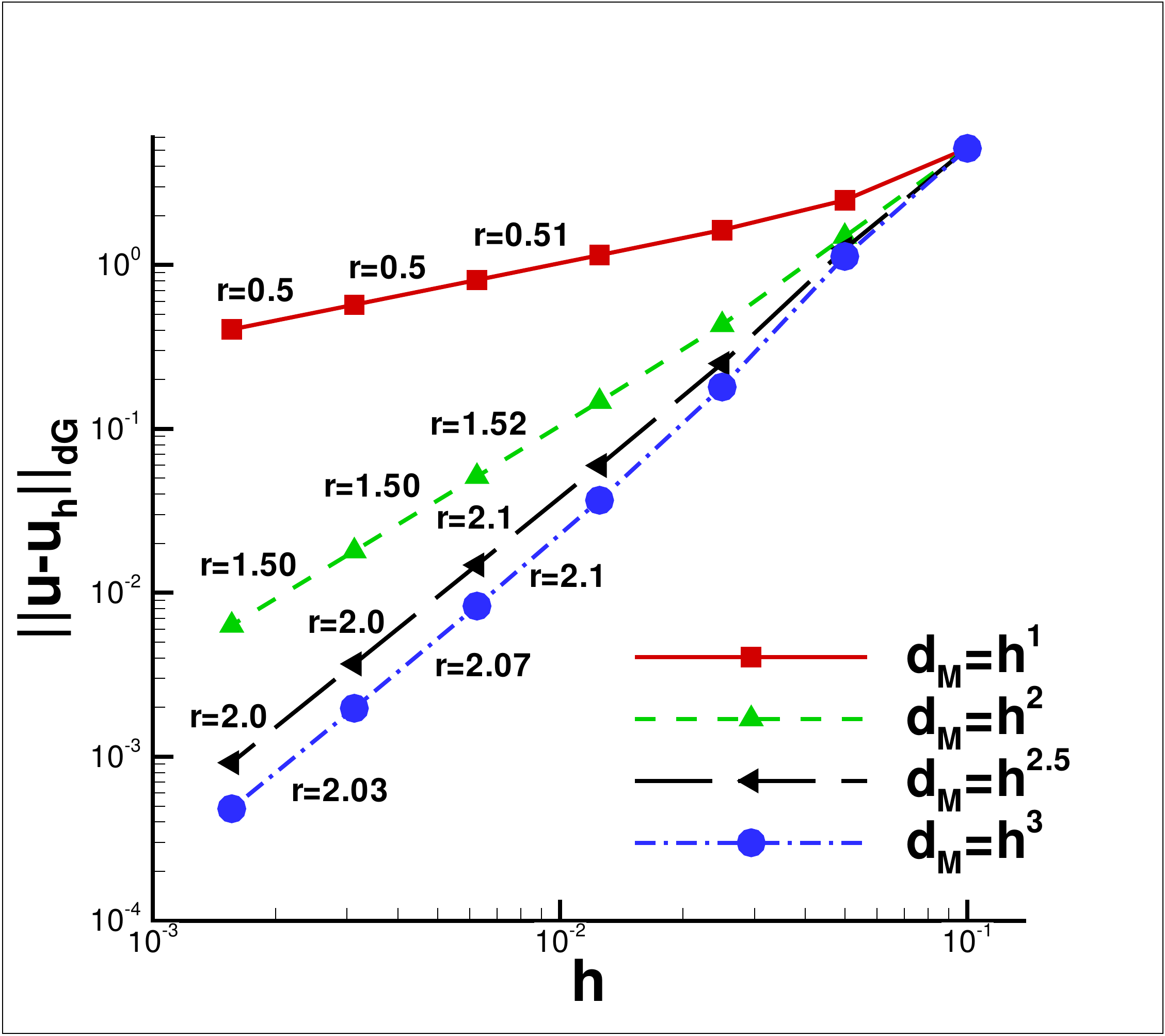}}}
 \subfigure[]{\raisebox{1cm}{\includegraphics[width=4.20cm, height=4.25cm]{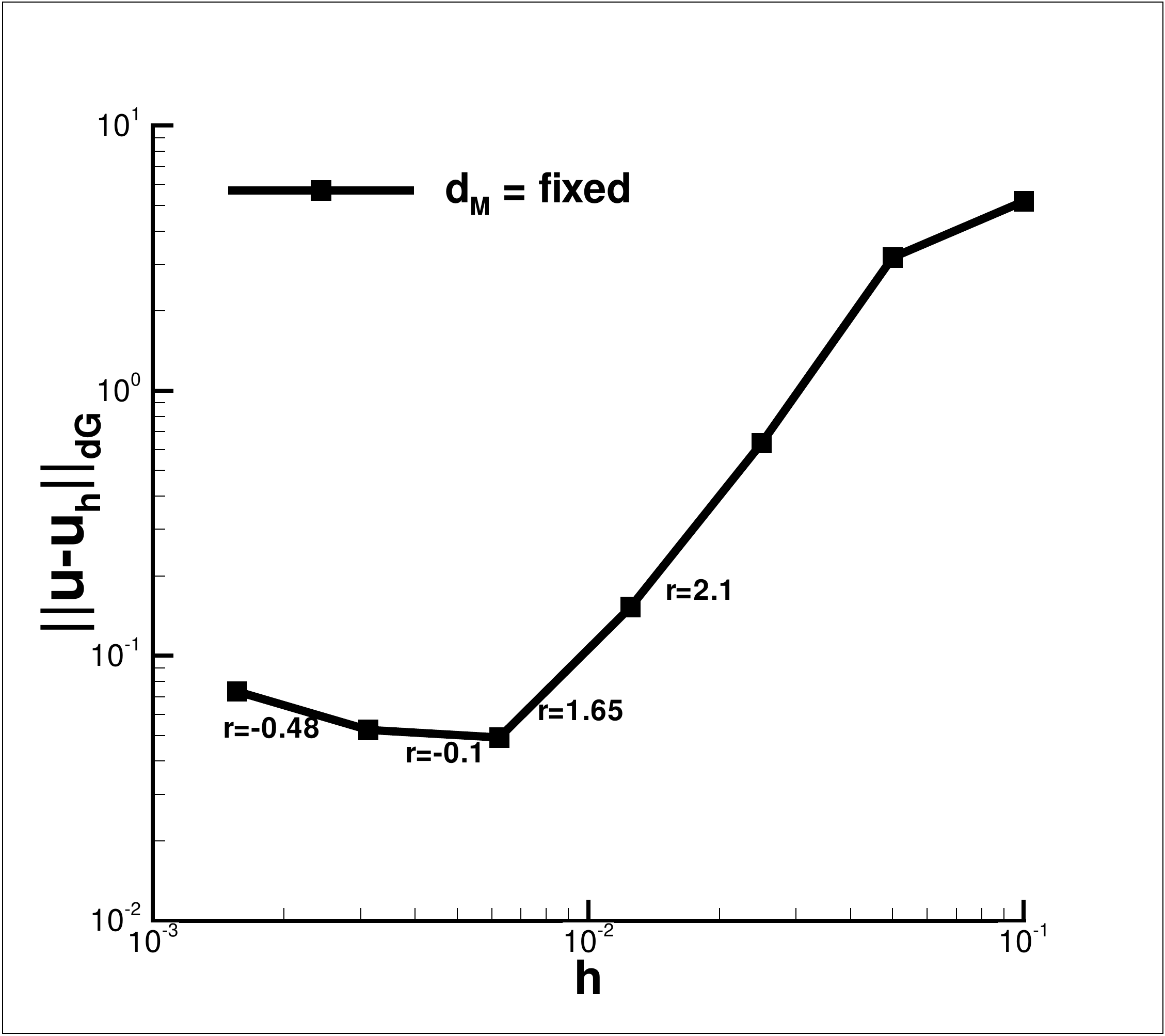}}}
   \caption{Example 1: (a) The patches $\Omega_i^*$ with the initial mesh. 
    (b) The contours of the $u_h$ solution for $d_g=d_o = h$. 
    (c)    The convergence rates for the different values of $\lambda$.
    (d)    The convergence rates for fixed gap and overlapping widths, $d_M=0.004$.}
   \label{Fig1_Test_1Gaps}
 \end{figure}
%
 \paragraph{Example  2: different diffusion coefficient.  
 } In the second example, we study the case of having smooth solutions on each $\Omega_i$ but discontinuous coefficient,  i.e.,  we set  $\rho_i \in \{1, \frac{3 \pi}{4}\approx 2.35 ,10^4\}$ according to the pattern in Fig.~\ref{Fig2_Test_2Gaps}(b). We consider a square build by a  $3\times3$ grid of patches, giving in total $N=9$ patches. The domain $\Omega$ with its patches $\Omega^*_i$  and the solution after one refinement are   presented  in Fig.~\ref{Fig1_Test_1Gaps}(a). The exact solution is given by 
\begin{align}\label{NE_1}
u(x,y)=
    \begin{cases}
	    \exp(-\sin(3\pi x))\sin(\frac{7\pi y}{2})& \text{if } {\ }x\in [0,1] \\
	    ((2x^2-1)+ (x-1)^2(x-2)\gamma)\sin(\frac{7\pi y}{2}) & \text{if} {\ }x\in (1,2]\\
	    7\exp(-\cos(\pi (x+1)/2)\sin(\frac{7\pi y}{2}) & \text{if} {\ }x\in (2,3],
  \end{cases} 
\end{align}
   where $\gamma = 14\cdot 10^4-8$. The  boundary conditions and the source function $f$ are  determined by (\ref{NE_1}).  Note that in this test case, we have  $\llbracket u \rrbracket |_{F}=0$ as well $\llbracket \rho \nabla u \rrbracket |_{F}\cdot n_F=0$ for the normal flux on the interfaces $F\in \mathcal{F}^*$.  The problem has been solved on  several meshes  
   following a sequential refinement process, where we set  $d_M=h^\lambda$, with $\lambda \in\{1,2,3,3.5,4\}$.  For the numerical tests, we use 
   {\color{black}
   B-splines of the degree $p=3$.
   }
   Hence, we expect optimal rates for $\lambda = 3.5$ and $\lambda = 4$. 
   In Fig.~\ref{Fig2_Test_2Gaps}(a) the approximate solution is presented on a relative coarse mesh  with $d_M = 0.06$.  The computed rates are presented in  Fig.~\ref{Fig2_Test_2Gaps}(c).
   By this example,  we numerically validate the   predicted convergence rates on $\mathcal{T}_H^*$ with gaps and overlaps 
   for diffusion problems with discontinuous coefficient and smooth solutions. 
   \par
   Moreover, the dG-IETI-DP method seems to be robust with respect to jumping diffusion coefficients in the presence  of complicated gaps and overlaps. The condition number $\kappa$ and CG iterations (It.) are summarized in Table~\ref{IETI_Ex2} for the case $d_M=h$ using coefficient and stiffness scaling. For this domain, the usage of more primal variables brings a significant advantage, 
   see column $W^{\mathcal{V}+\mathcal{E}}$ in Table~\ref{IETI_Ex2}.
   
\begin{table}[h!]
\centering
\begin{tabular}{|r|c|cc|cc|cc|cc|}\hline
\multicolumn{2}{|c|}{  }& \multicolumn{4}{|c|}{$W^{\mathcal{V}}$}& \multicolumn{4}{|c|}{$W^{\mathcal{V}+\mathcal{F}}$} \\ \hline
\multicolumn{2}{|c|}{ } &  \multicolumn{2}{|c|}{coefficient scal.} & \multicolumn{2}{|c|}{stiffness scal.} &  \multicolumn{2}{|c|}{coefficient scal.} & \multicolumn{2}{|c|}{stiffness scal.} \\ \hline
	    $\#$ dofs & $H/h$ & $\kappa$ & It. & $\kappa$ & It. &$\kappa$ & It. & $\kappa$&  It. \\ \hline
	    788    & 4    &   3.96   &  11    &  3.97   &    11  &  1.67  &   8    &   1.67  &   8   \\ \hline
	    1452   & 8   &    3.86   &  11    &  3.89   &    11  &  1.63  &   8    &   1.63  &   9   \\ \hline
	    3356   & 16   &   4.36   &  12    &  4.44   &    12  &  1.78  &   9    &   1.78  &   10  \\ \hline
	    9468   & 32   &   5.11   &  13    &  5.32   &    15  &  2.01  &   10   &   2.02  &   11  \\ \hline
	    30908  & 64   &   6.03   &  15    &  6.56   &    16  &  2.31  &   11   &   2.32  &   12  \\ \hline
	    110652 & 128  &   7.11   &  16    &  8.37   &    17  &  2.65  &   12   &   2.71  &   13 \\ \hline
  \end{tabular}
  \caption{Example 2: Dependence of the condition number ($\kappa$) and the number  CG iterations (It.) on $H/h$ for the preconditioned system with coefficient and stiffness scaling.  Choice of primal variables: vertex evaluation (left), vertex evaluation and edge averages (right).}
\label{IETI_Ex2}
  \end{table}
        
  \begin{figure}
   \begin{subfigmatrix}{3}
 \subfigure[]{\includegraphics[width=5.7cm, height=5.75cm]{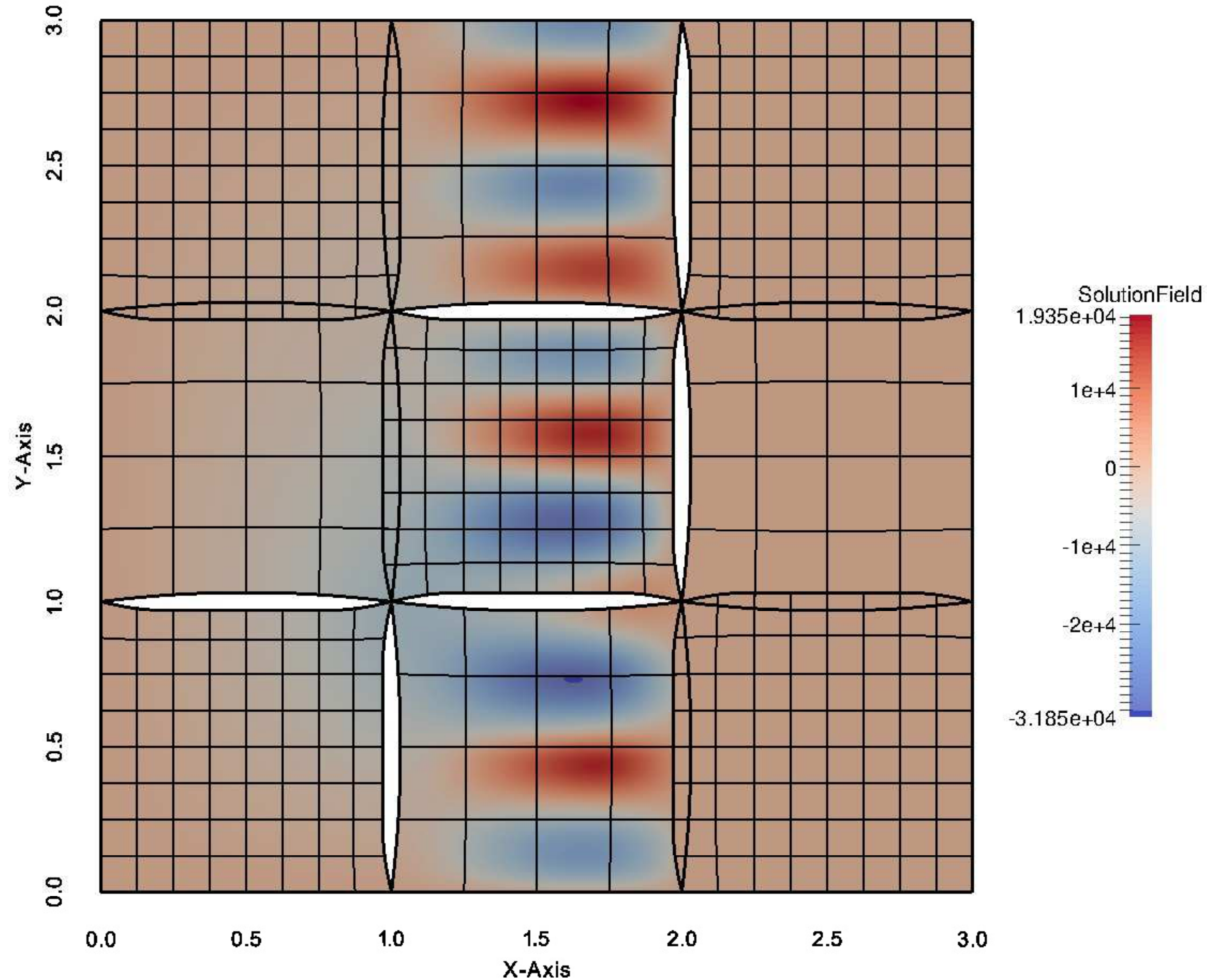}}
 \subfigure[]{\includegraphics[width=5.7cm, height=5.75cm]{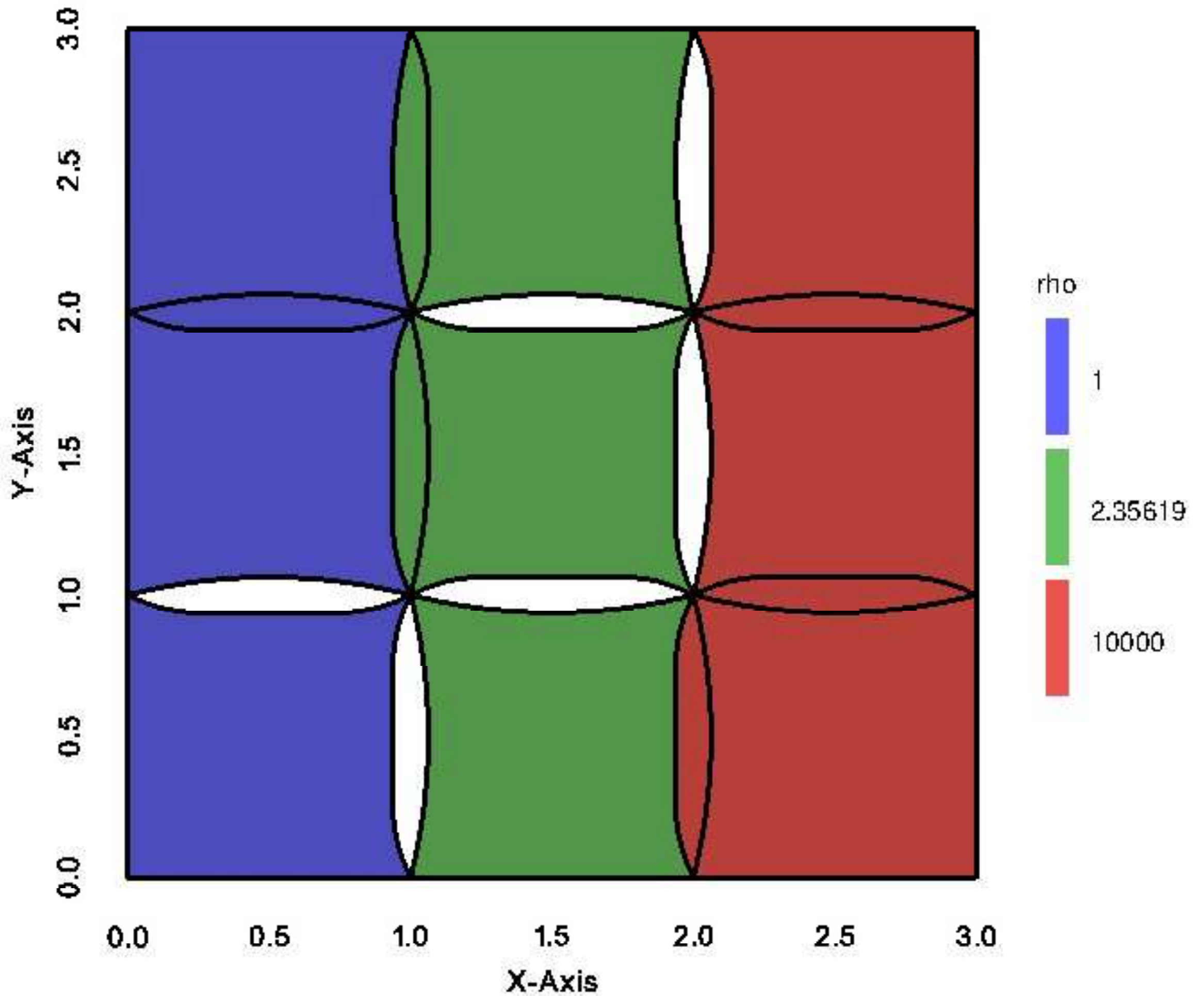}}
  \subfigure[]{\includegraphics[width=5.7cm, height=5.75cm]{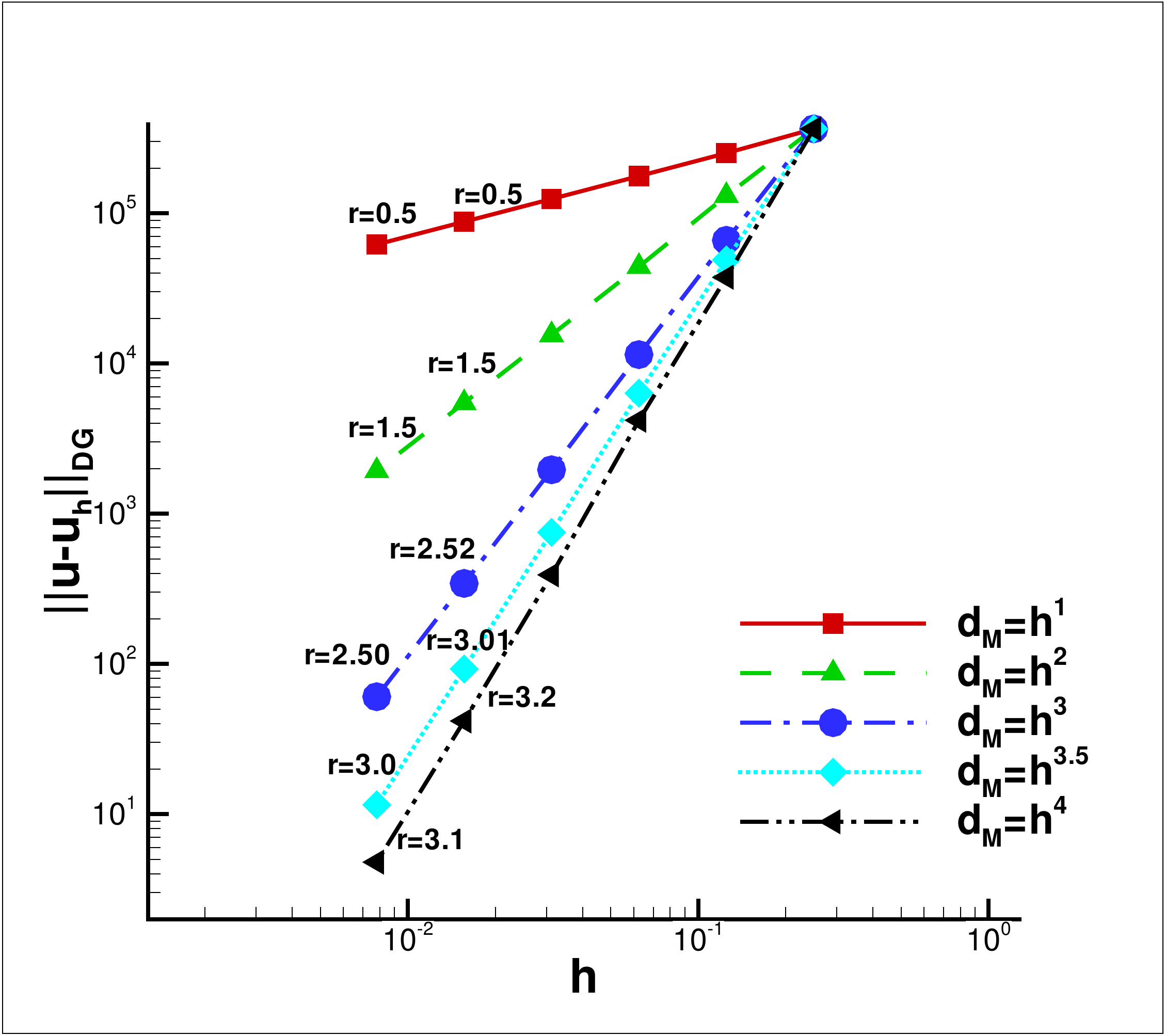}}
  \end{subfigmatrix}
   \caption{Example 2: (a) The contours of $u_h$ on the subdomains $\Omega_i$ with $d_g=0.06$,
                       (b) Pattern of diffusion coefficients $\rho_i$
                       (c) The convergence rates for the 5 choices of $\lambda$. }
   \label{Fig2_Test_2Gaps}
 \end{figure}
 \paragraph{Example  3: gap-overlap on the same interface and matching parametrized interfaces.} 
 In the this test, we apply the proposed method (\ref{7_d5}) on decompositions having a more complex gap and overlap regions.
 The domain is given by 4 patches with non-matching meshes, where we artificially created 
 {\color{black}
 both an overlap part and a gap part on one of the interfaces 
 as depicted in Fig.~\ref{Fig3_Test_GapOverlap}(a). 
 }
 This region is located at the interface between $\Omega_2$ and $\Omega_3$ at $x=0$,  
 We note that the gap and the overlap are not separated by patches as in the previous examples. In addition, we  consider inhomogeneous diffusion coefficients, i.e., $\rho_1=\rho_2=3\pi$ and $\rho_3=\rho_4=3$. The exact solution of the problem is given by
 	    \begin{align}\label{NE_3}
 	                  u(x,y)=
 	                  \begin{cases}
                             \sin(\pi(3x+y)) & \text{if } (x,y)\in \Omega_1,\Omega_2, \\
                             \sin(\pi(3\pi x+y)) & \text{if} {\ }(x,y)\in \Omega_3,\Omega_4.
                          \end{cases}
                          \end{align} 
 The source   function $f$ and $u_D$ are  manufactured  by the exact solution. 
 {\color{black}
 We mention that the interface conditions 
 $\llbracket u \rrbracket |_{F}=0$   and  $\llbracket \rho \nabla u \rrbracket |_{F}\cdot n_F=0$
 hold.
 }
 We have computed the convergence rates    for
 varying  size $d_M=h^\lambda$  for  $\lambda=1,2,2.5$ and $\lambda=3$, 
 where   $h$ is the maximum  size of the patch meshes.
  In Fig.~\ref{Fig3_Test_GapOverlap}(b), we plot the  contours of  $u$ on the domains $\Omega_1^*,\ldots,\Omega_4^*$ on  different meshes in case of $d_M=h$. 
 In Fig.~\ref{Fig3_Test_GapOverlap}(c), we plot the convergence rates. We observe that the values of the convergent rates for all  different cases of $\lambda$ confirm the theoretically  predicted values, see Table \ref{table_value_r}. The computational rates attain
 the optimal value $r=2$ for $\lambda=2.5$ and $\lambda = 3$, which is in agreement with the other examples and is the expected value for this problem with smooth solution.
By this example, we demonstrated the ability of the proposed method to approximate  oscillatory solutions of the diffusion problem (\ref{4b}) with the expected accuracy, in case of complex gap and overlap regions  using different patch meshes. 
\par
  The dG-IETI-DP method also performed in this case very well. Since we have only a small number of interface dofs, we get a quite small condition number and iteration count, see Table~\ref{IETI_Ex3}. However, due to the presence of quite few interface dofs, the usage of additional face averages as primal variables does not offer  any significant advantage.  
\begin{table}[h!]
\centering
\begin{tabular}{|r|c|cc|cc|cc|cc|}\hline
 \multicolumn{2}{|c|}{  }& \multicolumn{4}{|c|}{$W^{\mathcal{V}}$}& \multicolumn{4}{|c|}{$W^{\mathcal{V}+\mathcal{F}}$} \\ \hline
 \multicolumn{2}{|c|}{ } &  \multicolumn{2}{|c|}{coefficient scal.} & \multicolumn{2}{|c|}{stiffness scal.} &  \multicolumn{2}{|c|}{coefficient scal.} & \multicolumn{2}{|c|}{stiffness scal.} \\ \hline
	    $\#$ dofs & $H/h$ & $\kappa$ & It. & $\kappa$ & It. &$\kappa$ & It. & $\kappa$&  It. \\ \hline
	     184    & 2   &    1.12   &   4    &   1.11   &   5     &  1.11   &   4    &  1.1    &   4      \\ \hline
	     460    & 4   &    1.15   &   4    &   1.15   &   5     &  1.14   &   5    &  1.15   &   5     \\ \hline
	     1372   & 8   &    1.17   &   5    &   1.21   &   6     &  1.17   &   5    &  1.21   &   6     \\ \hline
	     4636   & 16  &    1.21   &   5    &   1.31   &   6     &  1.21   &   5    &  1.3    &   6     \\ \hline
	     16924  & 32  &    1.24   &   5    &   1.49   &   7     &  1.24   &   6    &  1.48   &   7     \\ \hline
	     64540  & 64  &    1.28   &   5    &   1.92   &   9     &  1.28   &   6    &  1.9    &   9     \\ \hline
	     251932 & 128 &    1.3    &   5    &   2.88   &   11    &  1.3    &   6    &  2.84   &   11   \\ \hline
  \end{tabular}
  \caption{Example 3, 2D example with inhomogeneous  diffusion coefficient on a domain with a complicated interface. Dependence of the condition number ($\kappa$) and the number  CG iterations (It.) on $H/h$ for the preconditioned system with coefficient and stiffness scaling. 
  Choice of primal variables: vertex evaluation (left), vertex evaluation and edge averages (right).}
\label{IETI_Ex3}
  \end{table}
  
            \begin{figure}
   \begin{subfigmatrix}{3}
 \subfigure[]{\includegraphics[width=5.7cm, height=2.75cm]{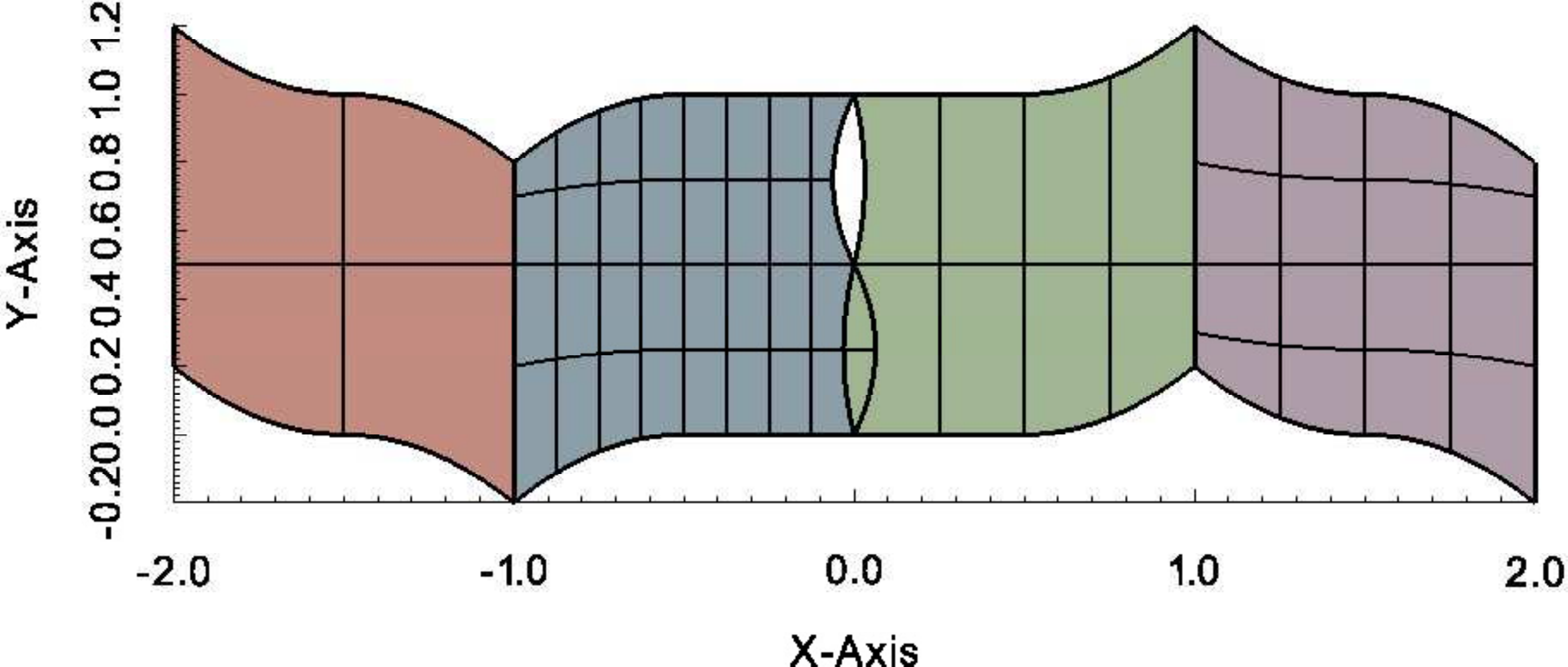}}
    \subfigure[]{\includegraphics[width=5.7cm, height=2.75cm]{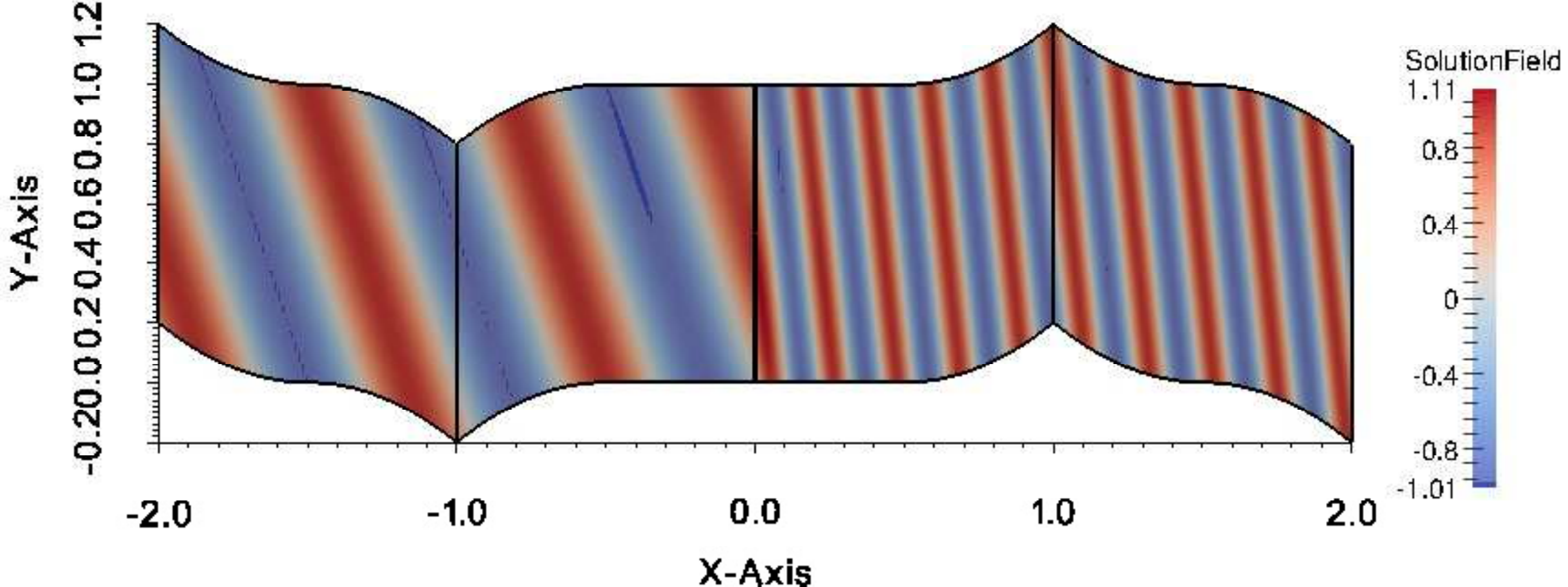}}
       \subfigure[]{\includegraphics[width=5.7cm, height=5.75cm]{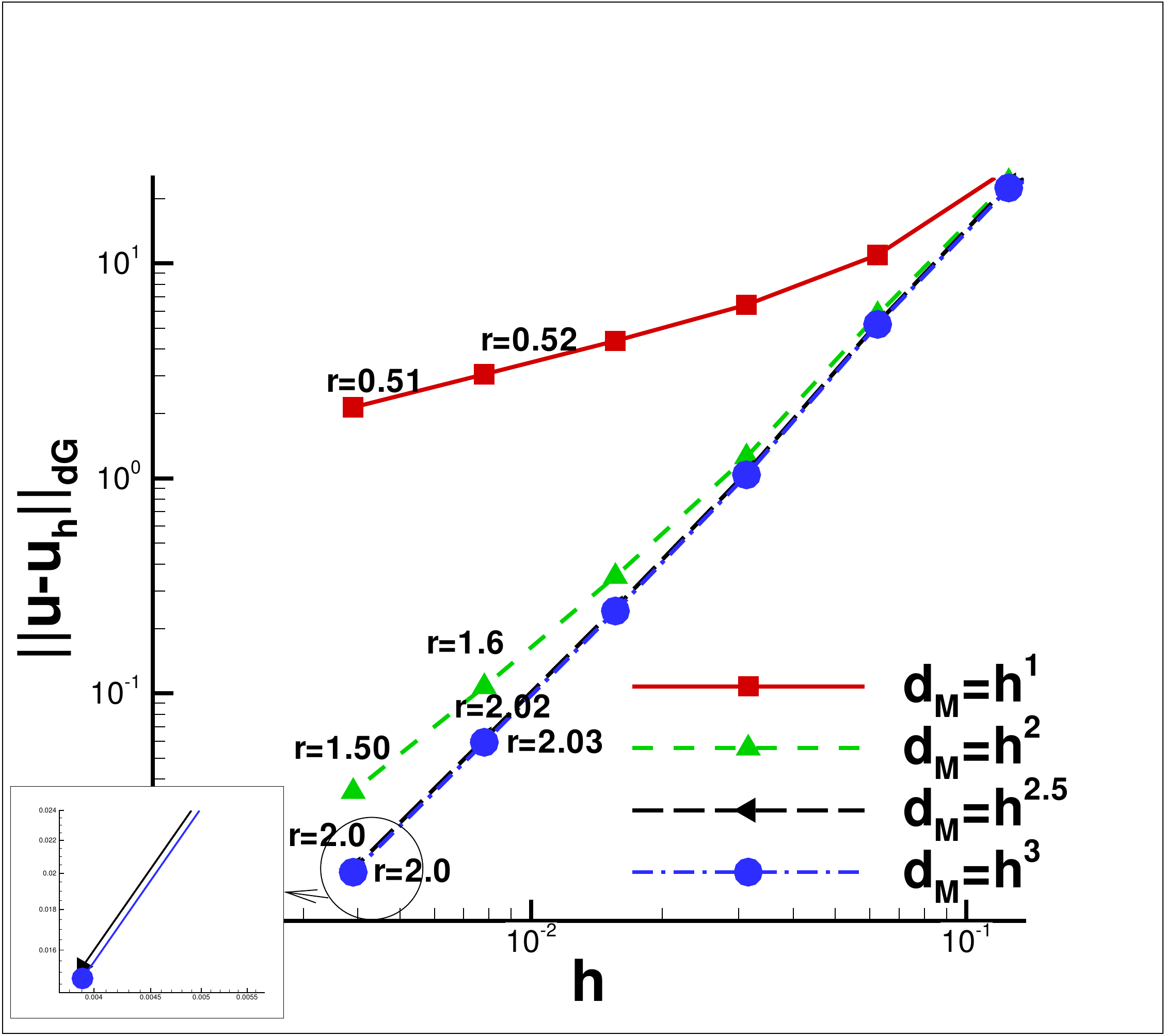}}
  \end{subfigmatrix}
   \caption{Example 3: (a) The 4 patches of the domain with its initial mesh and the gap and overlap region with $d_g = h$.
                       (b)  The contours of $u$ on $\Omega$ on the domain without gap  
                       (c) Convergence rates $r$ for the 4 values of $\lambda$.
}
   \label{Fig3_Test_GapOverlap}
 \end{figure}
 \subsection{Three-dimensional numerical examples}
 In the three-dimensional tests, the domain $\Omega$ 
 has been constructed by  a straight  prolongation to the $z$-direction 
 of the two dimensional domains of Fig.~\ref{Fig1_Test_1Gaps}(a) and Fig.~\ref{Fig3_Test_GapOverlap}(a). 
 However, in contrast to the two dimensional case, we start with matching meshes, as depicted in Fig.~\ref{Fig5_3DTest_1}(a) and Fig.~\ref{Fig6_3DTest}(a). The knot vector in $z$-direction  is simply $\Xi_i^3=\{0 , 0 ,0 , 0.5 ,1, 1, 1\}$ with $i=1,\ldots,N$. The B-spline parametrizations of these domains are constructed by adding a third component to the control points with the following values $\{0,0.5,1\}$. Again, the gap and overlap region is artificially constructed by moving only the interior control points located at the interface into the normal direction $n_F$ of the related interfaces $F$. Due to the fact that the gap and overlap has to be inside of the domain, we have to provide cuts though the domain in order to visualize them, cf. Fig.~\ref{Fig5_3DTest_1}(b) and Fig.~\ref{Fig6_3DTest}(b).
 
  	\paragraph{Example  4: 3d test with   $\rho_i=1$ for $i=1,\ldots,N.$} 
  	The computational domain is chosen as the domain from Example 1, extended to z-dimension as described above. The exact solution is given by 
  	{\color{black} $u(x,y,z)= \sin(\pi (x+0.4))\sin(2\pi(y+0.3))\sin(0.2\pi(z+0.6))$} 
  	with homogeneous diffusion coefficient $\rho_i=1$ for $i=1,\ldots N$.  
  	The set up of the problem is illustrated in Fig.~\ref{Fig5_3DTest_1}. 
  	In Fig.~\ref{Fig5_3DTest_1}(a), we present the domain $\Omega$ with its $N=21$ 
  	patches and the initial mesh. As already mentioned above, we use matching grids across the interface.
  	In Fig.~\ref{Fig5_3DTest_1}(b), we plot the contours of the solution $u_h$ resulting from the solution 
  	of the problem  in case of having a gap and overlapping widths such that $d_M=\max\{d_g,d_o\}=0.5$. 
  	Also, in Fig.~\ref{Fig5_3DTest_1}(b), we see the shape of the gaps  as it appears on  
  	an oblique cut of the domain $\Omega$. 
  	We note that, 
  	on those interfaces with no visible gap in Fig.~\ref{Fig5_3DTest_1}(b), 
  	there are overlapping regions, 
cf. Fig.~\ref{Fig1_Test_1Gaps}(a). We have computed the 
convergence rates for four different values  $\lambda\in\{1,2,2.5,3\}$.  
The results of the computed rates are plotted in Fig.~\ref{Fig5_3DTest_1}(c). 
We observe that the obtained rates are in agreement with the  convergent rates  
predicted by the theory, see estimate (\ref{4.5.d_1}) and Table \ref{table_value_r}.
         As already observed in \cite{HLT:HoferLanger:2016a} and \cite{HLT:HoferLanger:2016b}, 
         the condition number and CG iterations increases when having three dimensional domains. 
         The same behavior can be observed here, although 
         the condition number stays quite small, see Table~\ref{IETI_Ex4}. 
         Moreover, we observe that the stiffness scaling provides better results than the coefficient scaling. 
         For this example it is not beneficial to use additional face averages as primal variable.
         Both algorithms give identical results.
\begin{table}[tbh]
\centering
\begin{tabular}{|r|c|cc|cc|cc|cc|}\hline
 \multicolumn{2}{|c|}{  }& \multicolumn{4}{|c|}{$W^{\mathcal{V}+\mathcal{E}}$}& \multicolumn{4}{|c|}{$W^{\mathcal{V}+\mathcal{E}+\mathcal{F}}$} \\ \hline
 \multicolumn{2}{|c|}{ } &  \multicolumn{2}{|c|}{coefficient scal.} & \multicolumn{2}{|c|}{stiffness scal.} &  \multicolumn{2}{|c|}{coefficient scal.} & \multicolumn{2}{|c|}{stiffness scal.} \\ \hline
	    $\#$ dofs & $H/h$ & $\kappa$ & It. & $\kappa$ & It. &$\kappa$ & It. & $\kappa$&  It. \\ \hline
	      1488   & 2    &  1.2    &  10      &  1.19    &    10   & 1.2   &    10   & 1.19    &   10  \\ \hline
	      5508   & 3    &  7.87   &  26      &  4.45    &    22   & 7.87  &    26   & 4.45    &   22  \\ \hline
	      25908  & 7    &  10.9   &  32      &  6.31    &    26   & 10.9  &    32   & 6.31    &   26  \\ \hline
	      149748 & 15   &  13.5   &  36      &  8.17    &    30   & 13.5  &    36   & 8.17    &   30  \\ \hline
	      998388 & 31   &  16.3   &  40      &  10.2    &    34   & 16.3  &    40   & 10.2    &   34  \\ \hline
  \end{tabular}
  \caption{Example 4, 3D example. Dependence of the condition number ($\kappa$) and the number  CG iterations (It.) on $H/h$ for the preconditioned system with coefficient and stiffness scaling. 
  Choice of primal variables: vertex evaluation and edge averages (left), vertex evaluation, edge and face averages (right).}
\label{IETI_Ex4}
  \end{table}
  \begin{figure}
   \begin{subfigmatrix}{3}
 \subfigure[]{\includegraphics[width=5.0cm, height=4.050cm]{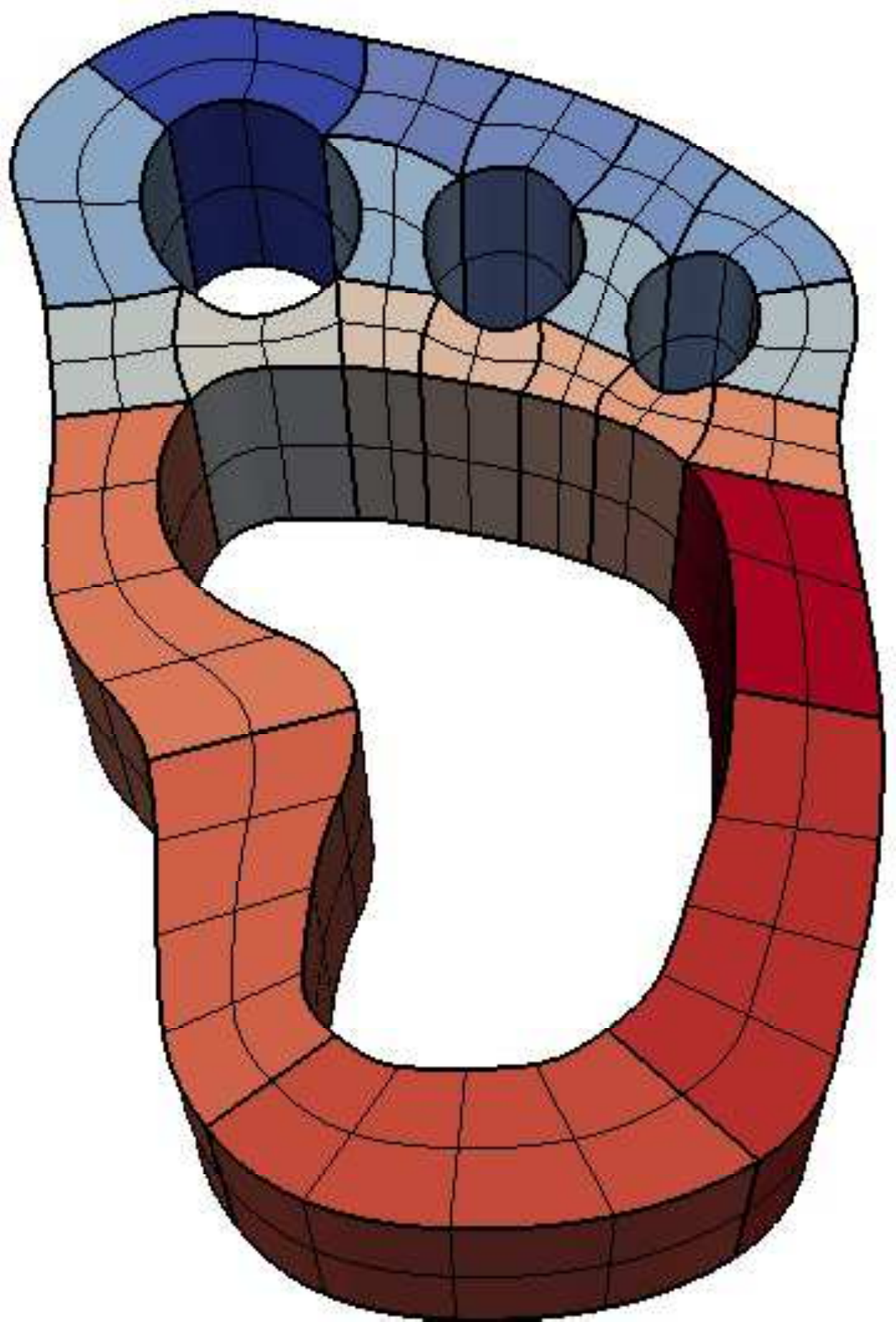}}
  \subfigure[]{\includegraphics[width=5.0cm, height=4.05cm]{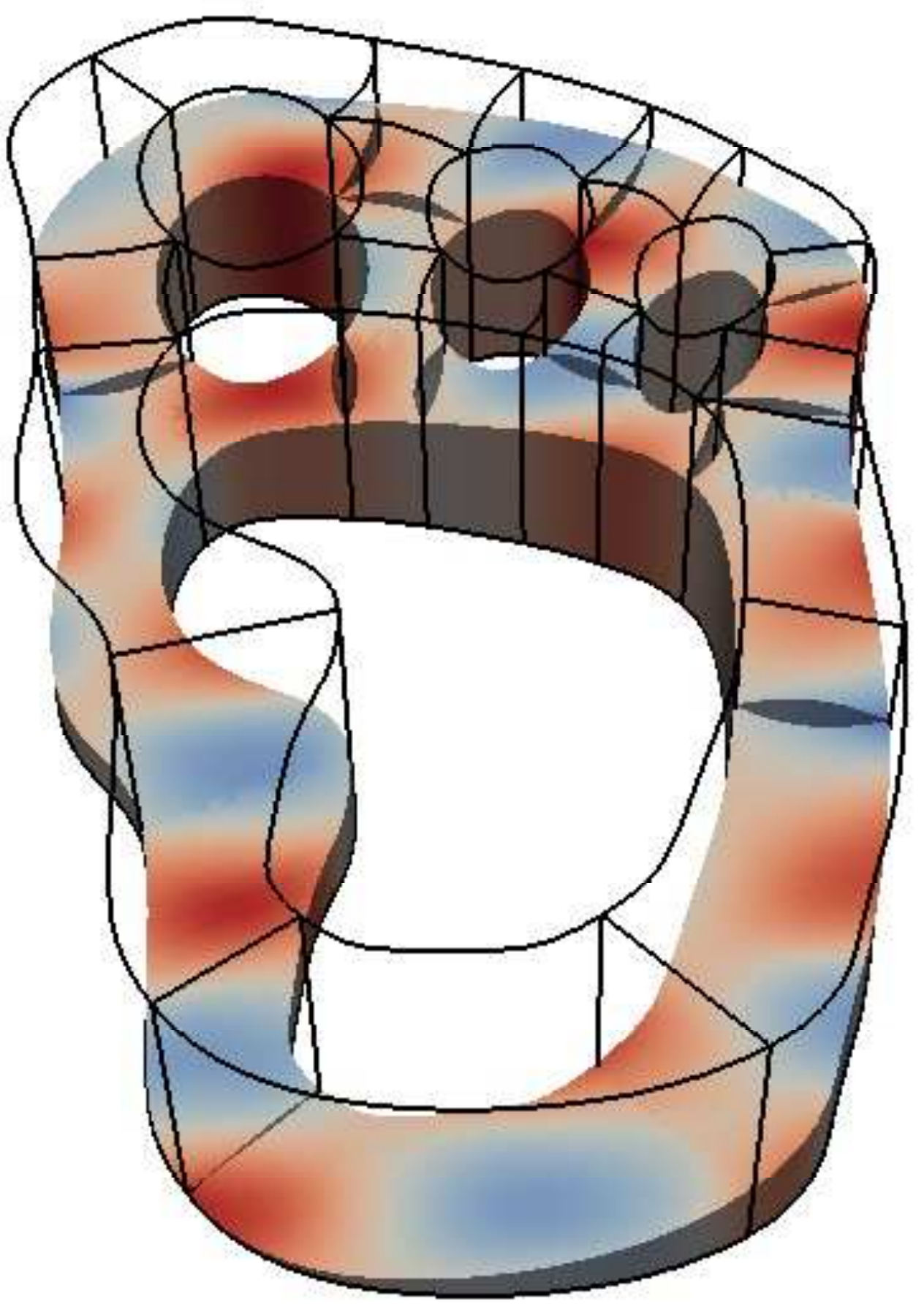}}
 \subfigure[]{\includegraphics[width=5.0cm, height=5.cm]{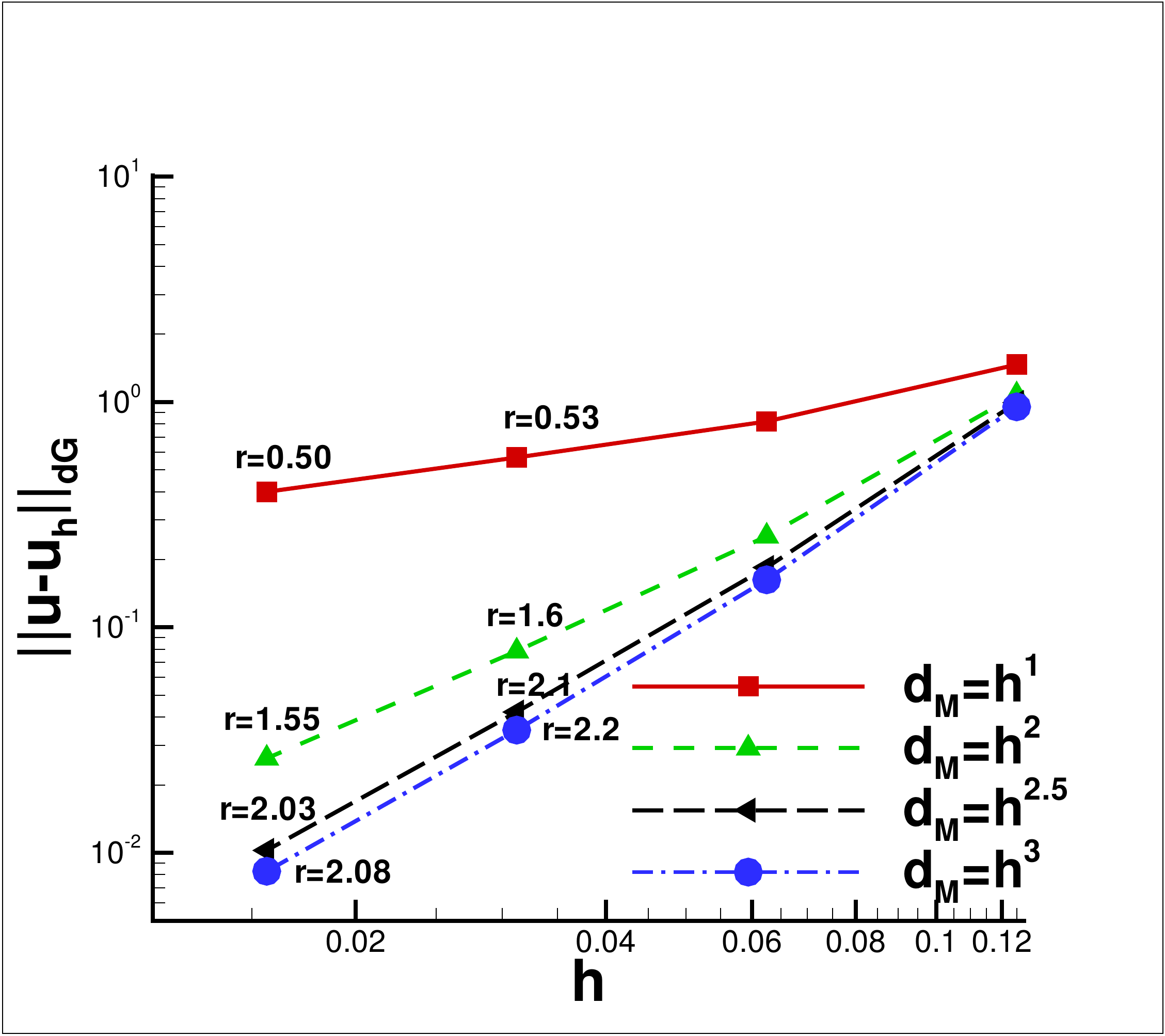}}
  \end{subfigmatrix}
   \caption{Example 4, $\Omega\subset \mathbb{R}^3$: (a) The multipatch system with initial mesh ,
                       (b) The contours of $u_h$ computed on $\Omega\setminus\overline{\Omega}_g$ with $d_g=0.05$, 
                       (c) Convergence rates $r$ for the three values of $\lambda$.}
   \label{Fig5_3DTest_1}
 \end{figure}
   \paragraph{Example  5: 3d gap-overlap region on one interface. } For the second numerical test in three-dimensions,  we choose the domain from Example 3 and extend it towards the $z$-direction  with matching grids. As in Example 3, we have a gap and an overlap simultaneously on an interface. This fact is visualized in Fig.~\ref{Fig3_Test_GapOverlap}(b). We consider a manufactured   problem, where the solution 
               \begin{align}\label{3D_test_2}
                  	u(x,y,z)=
  	                  \begin{cases}
                             u_1:=\sin(\pi (2 x+y+z))      & \text{if } (x,y,z) \in \Omega_1,\Omega_2 \\
                             u_2:= \sin(\pi(3\pi x+y+z)) & \text{if} {\ }(x,y,z)\in \Omega_3,\Omega_4
                           \end{cases}
                  \end{align}
 and the diffusion coefficient is defined to be  $\rho_1=\rho_2=2\pi$ and $\rho_3=\rho_4=3\pi$, as in Example 3. It is easy to see that we have the interface continuity conditions 
 {\color{black}
 $\llbracket  u \rrbracket |_{F} = 
 \llbracket \rho \nabla u\rrbracket |_{F} \cdot n_F =0$. 
 } 
{\color{black}
We solve the problem for $\lambda\in\{1,2,2.5,3\}$ as above.
}
 The contours of the solution $u$  on  $\Omega$ without gaps
 and overlapping regions are shown  in Fig.~\ref{Fig6_3DTest}(a). In Fig.~\ref{Fig6_3DTest}(b),  we plot 
  the contours  of the solution $u_h$ on an oblique cut through the domain $\Omega$, where the gap/overlap width is 
   $d_M=0.025$.
  We have computed the convergence rates $r$ for the four different sizes $d_M$, obtained by the different   values of $\lambda$. The results  are plotted in Fig.~\ref{Fig6_3DTest}(c). We observe that the rates are approaching the expected values that have been  mentioned in Table \ref{table_value_r}. 
 
 In this last example, the dG-IETI-DP method again provides very nice results, 
 summarized in Table~\ref{IETI_Ex5}. 
 Similar to Example 3, 
we only have three interfaces, therefore, only a small number of dofs corresponding to those.
 Hence, we again observe quite small condition numbers and iteration counts. 
 Both choices of primal variables have again identical performance, 
 but in contrast to the previous example, the coefficient scaling clearly provide better results than the stiffness scaling.
             
\begin{table}[h!]
\centering
\begin{tabular}{|r|c|cc|cc|cc|cc|}\hline
 \multicolumn{2}{|c|}{  }& \multicolumn{4}{|c|}{$W^{\mathcal{V}+\mathcal{E}}$}& \multicolumn{4}{|c|}{$W^{\mathcal{V}+\mathcal{E}+\mathcal{F}}$} \\ \hline
 \multicolumn{2}{|c|}{ } &  \multicolumn{2}{|c|}{coefficient scal.} & \multicolumn{2}{|c|}{stiffness scal.} &  \multicolumn{2}{|c|}{coefficient scal.} & \multicolumn{2}{|c|}{stiffness scal.} \\ \hline
	    $\#$ dofs & $H/h$ & $\kappa$ & It. & $\kappa$ & It. &$\kappa$ & It. & $\kappa$&  It. \\ \hline
	      424     &  2    &  1.21   &   7    & 2.09    &    9     &   1.21   &    7     & 2.09      &     9      \\ \hline
	      1416    &  3    &  1.27   &   7    & 3.18    &    12    &   1.27   &    7     & 3.18      &     12     \\ \hline
	      6376    &  7    &  1.36   &   7    & 4.17    &    16    &   1.36   &    7     & 4.17      &     16     \\ \hline
	      36264   &  15   &  1.36   &   7    & 3.65    &    16    &   1.36   &    7     & 3.65      &     16     \\ \hline
	      240424  &  31   &  1.35   &   7    & 3.05    &    14    &   1.35   &    7     & 3.05      &     14     \\ \hline
  \end{tabular}
  \caption{Example 5, 3D example with $p = 2$ and inhomogeneous  diffusion coefficient. Dependence of the condition number ($\kappa$) and the number  CG iterations (It.) on $H/h$ for the preconditioned system with coefficient and stiffness scaling. 
  Choice of primal variables: vertex evaluation and edge averages (left), vertex evaluation, edge and face averages (right).}
\label{IETI_Ex5}
  \end{table}
  
  \begin{figure}
   \begin{subfigmatrix}{3}
    \subfigure[]{\includegraphics[scale=0.15]{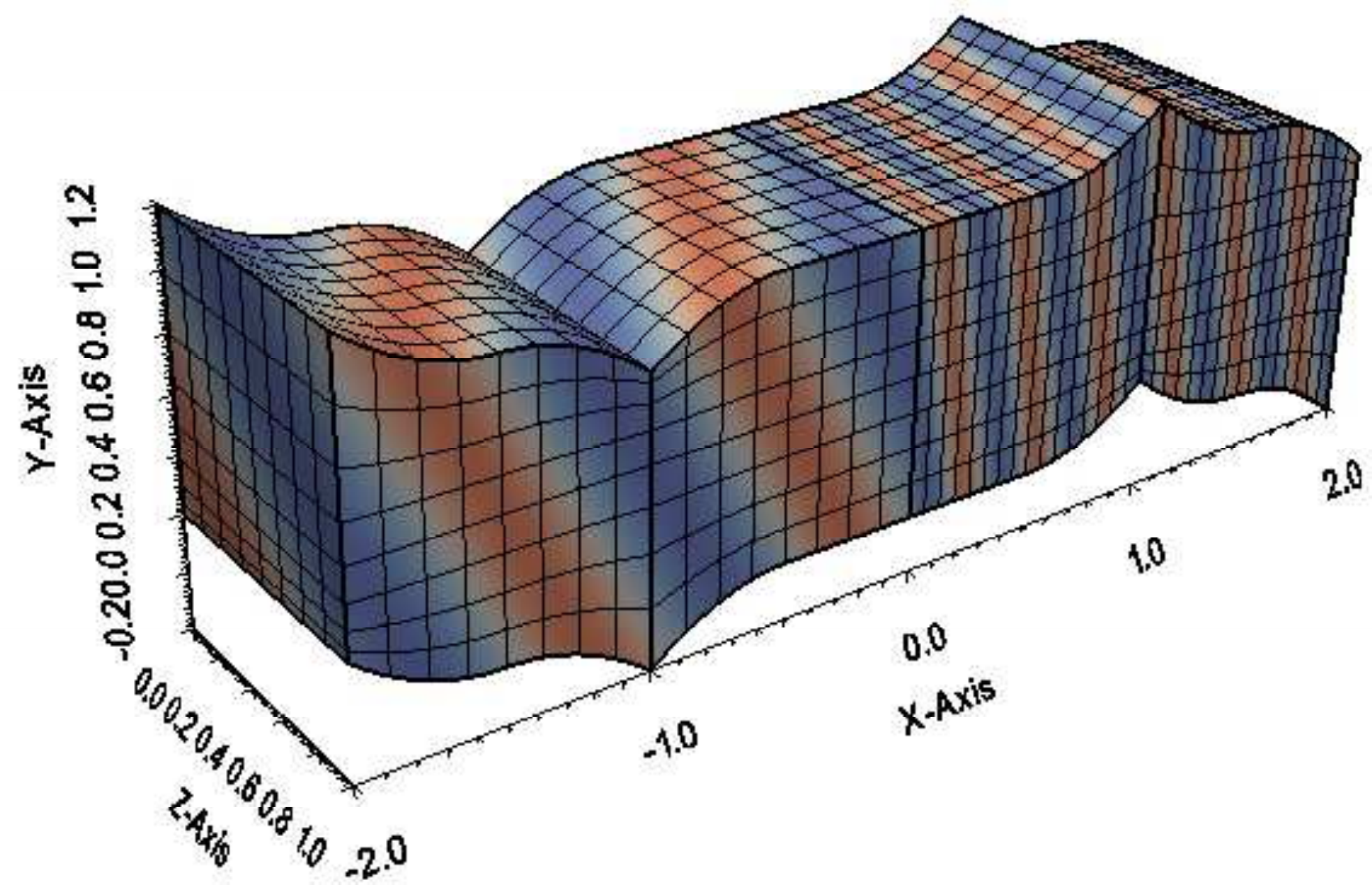}}
 \subfigure[]{\includegraphics[scale=0.15]{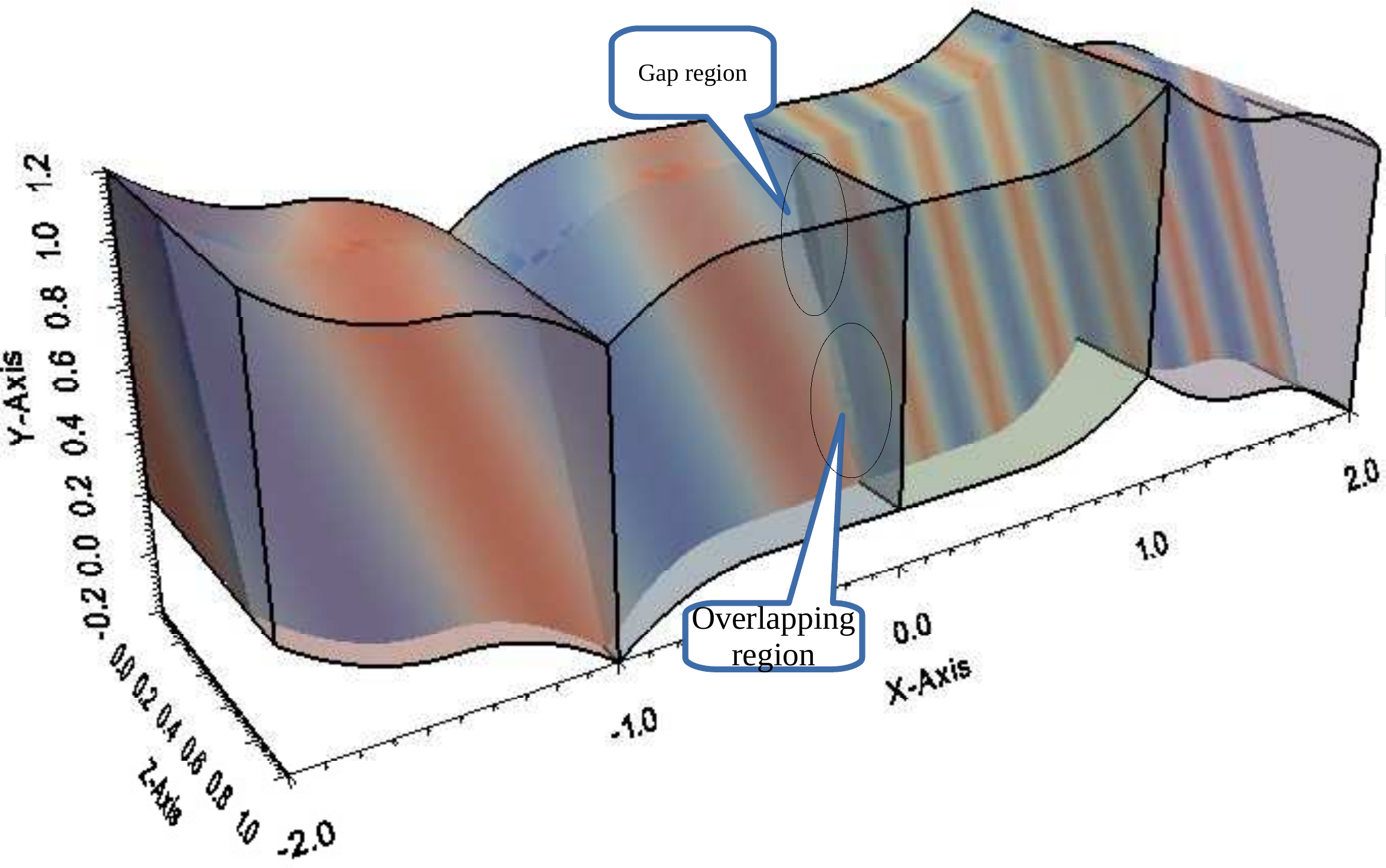}}
  \subfigure[]{\includegraphics[scale=0.015]{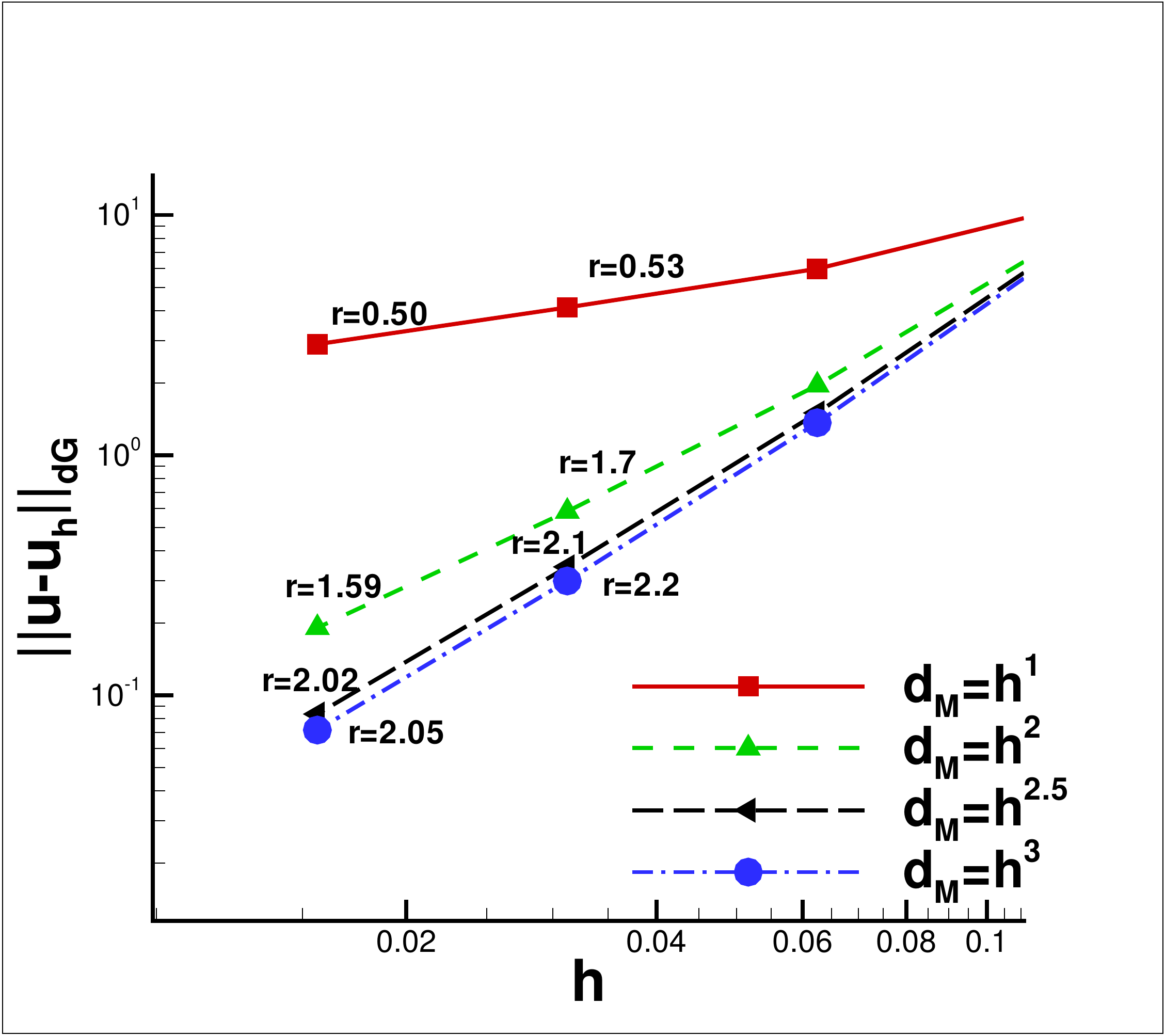}}
  \end{subfigmatrix}
   \caption{Example 5: (a) The contours of $u$ computed on $\Omega$,
                       (b) The contours of $u_h$ computed on $\Omega$ with $d_g=0.025$, 
                       (c) Convergence rates $r$ for the different $d_g$ sizes.}
   \label{Fig6_3DTest}
 \end{figure}     

\section{Conclusions}
In this article,  we have  constructed and analyzed dG IgA methods for discretizing linear,
second-order, scalar elliptic boundary value problems 
on volumetric patch decompositions
with non-matching interface parametrizations, which can include gap and overlapping regions.
Due to the appearance of  such segmentation crimes,
the direct  use of the standard dG numerical fluxes
for coupling the local  patch problems was not possible. 
Thus, the  normal fluxes on the gap and overlapping  boundaries were approximated
via  Taylor expansions using the interior values of the patches. 
The Taylor expansions  were appropriately used in  deriving
the dG-IgA  numerical scheme 
for constructing  the numerical fluxes, and finally helped on the  coupling of the local 
patchwise discrete problems. A priori error estimates in the  dG-norm 
were shown in terms of 
the mesh-size $h$ and the maximum width of the gap/overlapping regions.  
The estimates  were confirmed
by solving several two- and three- dimensional test problems with known exact solutions. 
The method were successfully applied  to the discretization of 
diffusion  problems in cases with     complex  gaps and overlaps 
using non-matching grids. The resulting linear problems were 
solved by means of the dG-IETI-DP method on geometries consisting of several patches.
Since the variational formulation is based on dG-IgA techniques, it is well suited for this method. 
The numerical examples showed that the presence of gap and overlap regions does not affect the solver performance. 
Hence, it is possible to solve these systems efficiently. 
Moreover, the dG-IETI-DP method is robust with respect to
large jumps in the diffusion coefficients across the patch boundaries (interfaces).

\section*{Acknowledgments}
This work was supported by the Austrian Science Fund (FWF) under the grant  S117-03 and  W1214-04.
\bibliographystyle{plain} 
\bibliography{Gaps_Overl_dGIgA}

%

\end{document}